\newcommand{\blind}{1}
\theoremstyle{plain}
\newtheorem{thm}{Theorem}[section] 
\newtheorem{lem}[thm]{Lemma}
\newtheorem{cor}[thm]{Corollary}
\theoremstyle{definition}
\newtheorem{defn}[thm]{Definition} 
\newtheorem{example}[thm]{Example}
\newtheorem{remark}[thm]{Remark}
\definecolor{mygreen}{RGB}{28,172,0} 
\definecolor{mylilas}{RGB}{170,55,241}
\definecolor{mygray}{gray}{0.95}
\newcounter{algsubstate}
\newcommand{\mscr}[1]{\mathscr{#1}}
\newcommand{\twid}[1]{\widetilde{#1}}
\newcommand{\ZZ}{\mathbb{Z}}
\newcommand{\RR}{\mathbb{R}}
\newcommand{\NN}{\mathbb{N}}
\newcommand{\EE}{\mathbb{E}}
\newcommand{\de}{\delta}
\newcommand{\ep}{\epsilon}
\newcommand{\sdp}{s}
\newcommand{\ndp}{n_{\mathrm{dp}}}
\newcommand{\defeq}{\vcentcolon=}
\DeclareMathOperator{\var}{Var}
\newcommand{\ol}{\overline}
\date{}
\begin{document}

\def\spacingset#1{\renewcommand{\baselinestretch}%
{#1}\small\normalsize} \spacingset{1}


\if1\blind
{
  \title{\bf Statistical Inference for Privatized Data with Unknown Sample Size}
  \author{Jordan Awan\thanks{
    Jordan Awan gratefully acknowledges the support of NSF awards  SES-2150615 and SES-2610910.}\hspace{.2cm}\\
    Department of Statistics, University of Pittsburgh\\\\
    Andr\'es F. Barrientos\thanks{The research of Andr\'es F. Barrientos was supported by the National Science Foundation National Center for Science and Engineering Statistics [49100422C0008]}\hspace{.2cm} \\
    Department of Statistics, Florida State University\\\\
    Nianqiao Ju\hspace{.2cm} \\
    Department of Mathematics, Dartmouth College}
  \maketitle
} \fi

\if0\blind
{
  \bigskip
  \bigskip
  \bigskip
  \begin{center}
    {\LARGE\bf Statistical Inference for Privatized Data with
    \smallskip 
    
    Unknown Sample Size}
\end{center}
  \medskip
} \fi

\bigskip
\begin{abstract}
We develop both theory and algorithms to analyze privatized data in \emph{unbounded differential privacy} (DP), where even the sample size is considered a sensitive quantity that requires privacy protection. 
   We show that the distance between the sampling distributions under unbounded DP and bounded  DP goes to zero as the sample size $n$ goes to infinity, provided that the noise used to privatize $n$ is at an appropriate rate; we also establish that Approximate Bayesian Computation (ABC)-type posterior distributions converge under similar assumptions. 
   We further give asymptotic results in the regime where the privacy budget for $n$ goes to infinity, establishing similarity of sampling distributions as well as showing that the MLE in the unbounded setting converges to the bounded-DP MLE. 
  To facilitate valid, finite-sample Bayesian inference on privatized data under unbounded DP, we propose a reversible jump MCMC algorithm which extends the data augmentation MCMC of \citet{ju2022data}. 
   We also propose a Monte Carlo EM algorithm to compute the MLE from privatized data in both bounded and unbounded DP. 
   We apply our methodology to analyze a linear regression model as well as a 2019 American Time Use Survey Microdata File which we model using a Dirichlet distribution.
   \end{abstract}

\noindent%
{\it Keywords:} differential privacy, reversible jump MCMC, maximum likelihood, expectation-maximization
\vfill



\section{Introduction}
Differential privacy (DP) \citep{dwork2006calibrating} is the leading framework in privacy protection. A randomized algorithm/mechanism satisfies DP if it ensures that for two ``neighboring datasets,'' the output distributions are ``similar.'' Variants of DP formalize the concepts of ``neighboring'' and ``similar'' differently. Similarity is often measured in terms of either a divergence, or bounds on the probability of obtaining any particular output. The two most common choices for ``neighboring datasets'' are 1) \emph{bounded DP}, where two datasets are neighbors if they have the same sample size and differ in one entry and 2) \emph{unbounded DP}, where two datasets are neighbors if they differ by the addition or removal of one entry; in this case the sample sizes of the two datasets differ by 1\footnote{ The terms bounded and unbounded DP were introduced in \citet{kifer2011no} and further used in the text \citet{li2017differential}. Other literature uses terms such as change-DP and add-delete-DP.}. The key difference between these two frameworks is that in unbounded DP, the sample size is considered to be a sensitive quantity, requiring privacy protection, which makes statistical inference more challenging. The sample size may need protection, especially when membership in the dataset is based on a sensitive quantity (e.g., dataset of patients with the sickle cell trait); this is because the sample size amounts to a count statistic, which are known to be sensitive to attacks (see ``Queries Over Large Sets are Not Protective'' 
\citealp[p7-p8]{Dwork2014}).

 While there are many DP mechanisms designed for either bounded DP or unbounded DP, several implementations of DP systems have focused on the unbounded setting \citep{mcsherry2009privacy,wilson2020differentially,rogers2021linkedin,amin2022plume}, likely due to its stronger privacy guarantee. On the other hand, for DP statistical inference, previous literature has focused on the bounded DP cases, as it allows the sample size $n$ to be available. 
For example, \citet{bernstein2018differentially,bernstein2019differentially,kulkarni2021differentially,ju2022data} all develop MCMC samplers to simulate from either an exact or approximate posterior distribution given the privatized data, and all of these methods work in bounded DP. 
In the frequentist setting, \citet{wang2018statistical} developed an asymptotic regime to derive statistical approximating distributions under bounded DP. There have been several works on private confidence intervals 
\citep{karwa2018finite,wang2019differentially,drechsler2022nonparametric,covington2025unbiased,awan2025simulation}, 
and hypothesis tests 
\citep{gaboardi2016differentially,awan2018differentially,barrientos2019differentially,awan2020differentially}, 
all of which use bounded DP. \citet{ferrando2022parametric} propose using the parametric bootstrap for inference on privatized data and 
\citet{brawner2018bootstrap,wang2025differentially} developed a non-parametric bootstrap for private inference, all in the bounded DP setting. 

  This paper is the first to investigate how to perform valid statistical inference in unbounded DP, with unknown $n$.  Our main theoretical questions are: How do sampling distributions differ when given the true value of $n$ versus a privatized $\ndp$? From another perspective: when is it asymptotically justified to plug in the observed value of $\ndp$ for the actual sample size $n$?  
 We also develop numerical algorithms for posterior inference and MLE estimation. Our contributions are as follows:

\begin{itemize}
  \setlength\itemsep{0em}
    \item We derive bounds for the privacy budget used to privatize $n$, under which we show consistency of the sampling distribution for privatized data with unknown $n$ relative to the sampling distribution for privatized data with known $n$ as  $n\rightarrow\infty$. We also show that approximate Bayesian computation (ABC)-type posterior distributions under bounded and unbounded DP approach each other under similar assumptions. 
    \item In the asymptotic regime where the privacy budget for $n$ goes to infinity (less noise/privacy protection), we show that the unbounded DP sampling distribution approaches the bounded DP sampling distribution, and in particular, establish conditions for the MLE in unbounded DP to converge to the MLE for bounded DP.
        \item We extend the Metropolis-within-Gibbs algorithm of \citet{ju2022data} to the unbounded setting, using a reversible jump MCMC methodology. Similar to \citet{ju2022data}, 
        we have lower bounds on the acceptance probabilities in the algorithm. We also establish  conditions for the Markov chain to be  ergodic or geometrically ergodic.
    \item We propose a Monte Carlo EM algorithm to compute the MLE in both bounded and unbounded DP, using the MCMC method to approximate the expectation step. 
    \item  We apply our methodology to linear regression models, which we study through simulations, and analyze a 2019 American Time Use Survey Microdata File, which we model using a Dirichlet distribution.
\end{itemize}

\noindent {\bf Organization: } In Section~\ref{s:DP} we review background on DP, 
establish our notation, and clarify the problem of interest. 
In Sections~\ref{s:sampleInf} and \ref{s:epInf}, we develop asymptotic results as either the sample size or the privacy parameter goes to infinity. 
We develop computational methods in Section~\ref{s:computational}, developing  a reversible jump MCMC algorithm for Bayesian inference under unbounded DP in Section~\ref{s:MCMC} and a Monte Carlo EM algorithm to approximate the MLE in Section~\ref{s:MCEM}. In Section~\ref{s:simulations} we conduct numerical experiments 
and Section~\ref{s:discussion} concludes with discussion. 
Proofs and technical details are in the Supplementary Materials.

\section{Preliminaries and problem setup}\label{s:prelim}
In this section, we review the necessary background on differential privacy and asymptotics. We also set the notation for the paper and clarify the problem of interest. 
\subsection{Differential privacy}\label{s:DP}

Differential privacy, introduced by \citet{dwork2006calibrating} is a probabilistic framework which quantifies the privacy protection of a given mechanism (randomized algorithm). DP mechanisms require the introduction of randomness into the procedure, which is designed to obscure the contribution of any one individual. To satisfy differential privacy, we require that when applied to two ``neighboring datasets,'' the resulting distributions of outputs from the mechanism are ``similar.''  
Given a space of input datasets, $\mscr X$, as well as a metric $d$ on $\mscr X$, two datasets $X,X'\in \mscr X$ are called \emph{neighboring}  if $d(X,X')\leq 1$. 

\begin{defn}
[Differential privacy: \citealp{dwork2006calibrating}]\label{def:DP}
Let $\ep\geq0$ and $\delta\in [0,1]$. Let $M:\mscr X\rightarrow \mscr Y$ be a mechanism and let $d:\mscr X\times \mscr X\rightarrow \RR^{\geq 0}$ be a metric. We say that $M$ satisfies $(\ep,\de)$-Differential Privacy ($(\ep,\de)$-DP) if $P(M(x)\in S)\leq \exp(\ep) P(M(x')\in S)+\delta,$ 
for all $x,x'\in \mscr X$, such that $d(x,x')\leq 1$ and all measurable sets of outputs $S\subset \mscr Y$. If $\de=0$, we simply write $\ep$-DP in place of $(\ep,0)$-DP.
\end{defn}

In Definition \ref{def:DP}, $\ep$ and $\de$ are called the privacy parameters, which govern how strong the privacy guarantee is. Smaller $\ep$ and $\de$ values give stronger privacy constraints. Typically, $\ep$ is a small constant often $\leq 1$, and $\de$ is usually chosen to be very small, such as $\de \ll 1/n$. 

Many databases can be expressed as an ordered sequence $x=(x_1,x_2,\ldots, x_n)$ for $x_i \in \mscr X_1$, where $x_i$ contains the information for individual $i$. In this case, there are two main flavors of DP: \emph{bounded DP}, which takes $\mscr X = \mscr X_1^n$ and uses the Hamming metric as the adjacency metric, and \emph{unbounded DP}, which takes $\mscr X = \mscr X_1^*=\bigcup_{n=1}^\infty \mscr X_1^n$ and considers $x$ and $x'$ to be neighbors if they differ by the addition or subtraction of one entry. If a mechanism satisfies $\ep$-(unbounded)DP, then it satisfies $2\ep$-(bounded)DP  \citep[Section 2.1.1]{li2017differential}. In bounded DP, the sample size is part of the privacy definition, and is implicitly a public quantity, whereas in unbounded DP, the sample size is not publicly known and requires privacy protection.


\subsection{Asymptotics}\label{s:asymp}
For two probability measures $P$ and $Q$ on a $\RR^d$, the total variation distance between $P$ and $Q$ is $\mathrm{TV}(P,Q) = \frac 12 \lVert P-Q\rVert=\sup_S |P(S)-Q(S)|$. A weaker metric is the KS distance: $\mathrm{KS}(P,Q) = \sup_R |P(R)-Q(R)|$, where the supremum is over axis-aligned rectangles.

We write $P_n\overset {KS}\sim Q_n$ and $P_n\overset {TV}\sim Q_n$ if $\mathrm{KS}(P_n,Q_n)\rightarrow 0$ or $\mathrm{TV}(P_n,Q_n)\rightarrow 0$, respectively. If a distribution $P_n$ has an asymptotic limit (possibly with some rescaling), then if another distribution $Q_n\sim P_n$ in either sense, then it has the same asymptotic distribution. In cases where $P_n$ and $Q_n$ have respective densities $p_n(y)$ and $q_n(y)$ with respect to a common base measure, then with some abuse of notation, we write $p_n(y) \overset{KS}{\sim} q_n(y)$, $p_n(y) \overset{TV}{\sim} q_n(y)$, $\mathrm{KS}(p_n(y),q_n(y))$, or $\mathrm{TV}(p_n(y),q_n(y))$ to mean $P_n \overset{KS}{\sim} Q_n$, $P_n \overset{TV}{\sim} Q_n$, $\mathrm{KS}(P_n,Q_n)$, or $\mathrm{TV}(P_n,Q_n)$, respectively.

\subsection{Problem setup}\label{s:setup}

The random variable $x$ represents the sensitive dataset,  which is commonly a matrix of dimension $n\times m$, and which is no longer available after privacy protection.  Note that the true sample size of $x$ is $n$, which is also no longer available after unbounded DP privacy protection. We assume that the privacy mechanism results in two outputs $s\in \RR^d$ and $\ndp\in \RR$, where $s$ is any DP summary  (possibly, but not necessarily based off of a sufficient statistic) and $\ndp$ is a DP estimate of $n$, commonly achieved by applying an additive noise mechanism such as discrete Laplace noise to $n$.  We assume that $s$ and $\ndp$ are conditionally independent given $x$ and $n$; that is, neither one explicitly depends on the other. Finally, $\theta\in \RR^p$ is the parameter that generated $x$, which is a random variable with a prior distribution. We also assume a prior distribution on the unobserved $n$. Since we will consider both Bayesian and frequentist problems, for frequentist settings, we condition on $\theta$ and disregard the prior distribution. 

\begin{remark}
     In our notation, we separate the general DP summary $s$ from $\ndp$, which is a DP estimate of $n$.  While some mechanisms may have $s$ depending on $\ndp$, our framework requires that these mechanisms do not depend on each other. For example, a private estimator of the mean could use $\ndp^{-1}\left(\sum_{i=1}^n x_i+L_1\right)$ where $L_1$ is an appropriately calibrated noise (such as Laplace), and $\ndp = n+L_2$ where $L_2$ is another noise variable; however, in our setting, we think of $s=\sum_{i=1}^n x_i+L_1$, which is the noisy \emph{sum} rather than the noisy \emph{average}. As post-processing, we can estimate the average with $s/{\ndp}$. There are also mechanisms that can directly estimate the mean without explicitly using $\ndp$, such as the KNG mechanism discussed in Example \ref{ex:LaplaceKNG}.
\end{remark}

Due to the large number of random variables in this problem, we will simply use $p$ to denote the pdf/pmf of a random variable; for example $p(\ndp|n)$ is the density of the privacy mechanism which produces $\ndp$ from a dataset with size $n$. 

The joint Bayesian model for the parameters and observed values is  
$p(\theta,s,n,\ndp) = \int p(s|x)p(x|\theta,n)p(\theta) p(\ndp|n) p(n) \ dx.$ 
The joint distribution of $s$ and $\theta$, given the true $n$ is 
$p(\theta,s|n) = p(s|\theta,n) p(\theta),$  where we have a ``prior'' for the sample size $n$ as well as for the parameter $\theta$. Note that we write $p(x|\theta,n)$ for the sampling distribution for the sensitive data; commonly given $n$ and $\theta$, $x$ consists of $n$ i.i.d. samples drawn from the model with parameter $\theta$.  Furthermore, $p(s|x,\theta,n)=p(s|x)$ since 1) the mechanism only operates on the data and does not use the parameter; so, given the data $x$, $s$ and $\theta$ are conditionally independent and 2) the sample size $n$ can be viewed as a function of the data $x$; thus, $p(s|x,n)=p(s|x)$. Similarly, the DP mechanism for $\ndp$ only depends on $n$, and so given $n$, $\ndp$ and $(x,\theta)$ are conditionally independent, which implies that $p(\ndp|n,x,\theta)=p(\ndp|n)$. 
The joint distribution given only $\ndp$ is 
\begin{equation}\label{eq:posterior}
\begin{split}
    p(\theta,s|\ndp) & = \frac{\sum_{k=1}^\infty p(s|\theta, n=k) p(\theta) p(\ndp|n=k) p(n=k)}{\sum_{k=1}^\infty p(\ndp|n=k) p(n=k)}\\
    &=\sum_{k=1}^\infty p(\theta,s|n=k) p(n=k|\ndp),
    \end{split}
\end{equation}
which is a mixture of the distributions $p(\theta,s|n)$, weighted by the posterior $p(n|\ndp)$. 

In terms of frequentist models, it is natural to consider $n$ to be part of the data, rather than part of the parameter. As such, rather than a ``prior'' distribution for $n$, we would have a model $p(n|\lambda)$, where $\lambda$ are the parameters that govern the distribution of $n$. In this case, we can also write the frequentist distribution for $s$ given $\ndp$:
\begin{equation}\label{eq:freq}
\begin{split}
    p(s|\theta,\lambda,\ndp) &= \frac{\sum_{k=1}^\infty p(s|\theta,n=k)p(\ndp|n=k)p(n=k|\lambda)}{\sum_{k=1}^\infty p(\ndp | n=k) p(n=k|\lambda)}\\
    &=\sum_{k=1}^\infty p(s|\theta,n=k) p(n=k|\ndp,\lambda).
    \end{split}
\end{equation}

In the remaining paper, we will prefer to write $p(n)$ rather than $p(n|\lambda)$ for simplicity, unless details of the specific distribution of $n$ are needed. 

 Note that both \eqref{eq:posterior} and \eqref{eq:freq} do not involve the original data $x$ or $n$ as these quantities are not available after the privatization. Both quantities have \emph{marginalized out} the data $x$ along with the sample size $n$, which can be viewed as missing data or latent variables. This approach has been previously considered in the literature (e.g., \citealp{williams2010probabilistic,gong2022exact,ju2022data,chen2025particle}).

{\noindent \bf Problem of Interest: }  The main theoretical question of interest, which is tackled in Sections \ref{s:sampleInf} and \ref{s:epInf}, is as follows: how do sampling distributions differ when given the true value of $n$ versus a privatized $\ndp$? 
To make our theory precise, we use $n_0$ as a generic value which could represent the observed value of either $\ndp$ (unbounded DP) or $n$ (bounded DP) and compare distributions where one conditions on $\ndp=n_0$ and the other conditions on $n=n_0$.  We also consider a more traditional frequentist regime where we condition on $n=n_0$, but plug-in $\ndp$ in place of $n$. Section \ref{s:sampleInf} establishes conditions that ensure that such pairs of distributions asymptotically coincide as $n_0\rightarrow \infty$, and Section \ref{s:epInf} studies the case where the privacy budget for $\ndp$ goes to infinity; that is, when $\ndp$ becomes a more accurate estimate of $n$.

\section{Asymptotic results as sample size increases}\label{s:sampleInf}
In this section, we consider the relation between distributions under bounded and unbounded DP, and investigate under what conditions these distributions have the same asymptotic properties. 

\subsection{Convergence of sampling and joint distribution}\label{s:likelihood}
In this section, we establish conditions for distributions under unbounded and bounded DP to converge in KS distance. The key conditions are specified in A1 and A2 below. Intuitively, A1 says that in bounded DP the random variable $s$ has a Central Limit Theorem (CLT)- type asymptotic distribution, where we allow an arbitrary rate of convergence to a general mean-zero continuous distribution. Furthermore, A2 requires that the difference between $\ndp$ and $n$ is not too large. Interestingly, we find that the threshold for ``too large'' depends on the nature of the asymptotic distribution in A1.

\begin{itemize}
  \setlength\itemsep{0em}
    \item A1:  The sequence of random variables, $(s|\theta,n=n_0)$, indexed by $n_0$, has an asymptotic distribution: there exists $b\in (0,1]$, $a\in \RR$, and a function $g:\Theta\rightarrow \RR^d$ such that    
    $n_0^b(n_0^{-a} s-g(\theta))\overset d \rightarrow X,$ 
     as $n_0 \rightarrow \infty$, where $X$ is a mean-zero continuous random variable. (e.g., Central Limit Theorem: $b=1/2$, $a=1$ or $a=0$, and $X\overset d =N(0,I)$)
    \item A1': A1 holds, but where convergence is in total variation.
    \item A2: 
     Assuming A1 holds, let $a_{n_0}$ be a sequence such that 
        if $a=0$ in A1, then $a_{n_0} = o(n_0)$, and 
        if $a\neq 0$ in A1, then $a_{n_0} = o(n_0^{1-b})$. Either 
            (a) assume that $\ndp$ takes values in $\ZZ$ and $(\ndp-n_0|n=n_0)=O_p(a_n)$, or 
            (b) Assume that for all $n_0\in \ZZ^+$, $(n-n_0|\ndp=n_0)=O_p(a_n)$.
\end{itemize}

While A1 and A2(a) are relatively intuitive, and can be easily verified for many DP summary statistics such as the Laplace and Gaussian mechanisms, A2(b) is less straightforward to verify, as it concerns itself with a posterior distribution.  In Lemmas A.1 and A.2 in the Supplementary Materials, we show that when using a flat prior for $n$, the (continuous) Laplace mechanism for $\ndp$ with privacy parameter $\epsilon$ satisfies $n-n_0|\ndp=n_0=O_p(1/\ep)$, and when using any additive noise mechanism for $\ndp$ with a symmetric and integer-valued noise, the rate of $(n-n_0|\ndp=n_0)$ is the same as $(\ndp-n_0|n=n_0)$. For example, both the discrete Laplace and discrete Gaussian mechanisms are symmetric and integer-valued. Thus,  Assumptions A1 and A2 are satisfied by some common additive noise mechanisms.

The alternative assumption A1' ensures that our convergence results hold in total variation, rather than only in distribution. Convergence in total variation is a stronger notion of convergence which, due to the data processing inequality, ensures that all transformations (such as statistics/estimators) of the random variables also converge. If $s$ is an additive privacy mechanism applied to a sample mean, then under a number of common settings, A1' holds; see Lemma A.6 in the Supplementary Materials for examples. 

Under assumptions A1 and A2, for unbounded or bounded DP, the distributions of the likelihood function/joint distribution are asymptotically the same. 

\begin{restatable}{thm}{thmconvergence}\label{thm:convergence}
 1. Under A1 and A2(a), we have that $p(n^b(n^{-a}s-g(\theta))|\theta,n)\overset {KS}\sim p(\ndp^b(\ndp^{-a}s-g(\theta))|\theta,n)$ as $n\rightarrow \infty$. If $p(\theta)$ is a proper prior, we also have $p(\theta, n^b(n^{-a}s-g(\theta))|n)\overset {KS}\sim p(\theta, \ndp^b(\ndp^{-a}s-g(\theta))|n)$.

2. Under A1 and A2(b), we have that $p(s|\theta,n=n_0)\overset {KS}\sim p(s|\theta,\ndp=n_0)$ as $n_0\rightarrow \infty$. If $p(\theta)$ is a proper prior, then $p(\theta,s|n=n_0)\overset {KS}\sim p(\theta,s|\ndp=n_0)$ as well. 

Replacing A1 with A1', all convergence results hold in total variation distance. 
\end{restatable}

Part 1 of Theorem \ref{thm:convergence} shows that the sampling distribution of $n^b(n^{-a}s -g(\theta))$ is asymptotically the same when $\ndp$ is plugged in in for $n$. Part 2 addresses a slightly different idea:  whether we observe the true $n=n_0$ or the privatized $\ndp=n_0$ (for large $n_0$), we would expect $p(s,\theta|n=n_0)$ and $p(s,\theta|\ndp=n_0)$ to be similar. This could also be interpreted as observing $\ndp=n_0$, but erroneously condition on $n=n_0$: part 2 of Theorem \ref{thm:convergence} says that this is asymptotically valid as long as A1 and A2(b) hold. 


The strength of convergence in total variation is that it implies that any estimator which depends on $s$ and $n_0$ also has the same asymptotic distribution under either $p(s|\theta,n=n_0)$ or $p(s|\theta,\ndp=n_0)$, whereas under KS convergence we cannot generally make this claim. See Corollary A.10 in the Supplementary Materials for a formal statement.

Based on Theorem \ref{thm:convergence} and how it depends on assumptions A1 and A2, we investigate some DP examples where one could design $s$ to have either $a=0$ or $a=1$ in A1.

\begin{example}\label{ex:LaplaceKNG}
In this example, we consider two types of privacy mechanisms, which result in either $a=0$ or $a=1$ in assumption A1. We assume that $x_i \in [0,1]$ and are interested in privately estimating the sample mean subject to $\epsilon$-DP.

{\bf Laplace Mechanism:} One of the most common privacy mechanisms, the Laplace mechanism produces $s = \sum_{i=1}^{n} x_i+L$, where $L\sim \mathrm{Laplace}(0,1/\ep)$ and $s$ satisfies $\ep$-DP. We see that $a=1$ and $b=1/2$. So, for A2(b), we require $n|(\ndp=n_0)=n_0+o_p(\sqrt {n_0})$.

{\bf $K$-Norm Gradient (KNG) Mechanism \citep{reimherr2019kng}:} 
The empirical risk for mean estimation is $\sum_{i=1}^n (x_i-s)^2$, with gradient $\sum_{i=1}^n -2(x_i-s)$. If $s\in [0,1]$,  the sensitivity of the gradient is $2$. KNG samples $s$ from the density proportional to 
\begin{align*}
    \exp\left(-(\ep/4)\left|2\sum_{i=1}^n(x_i-s)\right|\right)I(s\in [0,1])
    &=\exp\left(-(\ep n/2)\left | \ol x-s\right|\right)I(s\in [0,1]),
\end{align*}
which we identify as $s\sim \mathrm{Laplace}_{[0,1]}(\ol x,2/(n\ep))$, the truncated Laplace distribution on $[0,1]$. By \citet{reimherr2019kng}, this mechanism satisfies $\ep$-DP. Here we have $a=0$ and $b=1/2$. So, to satisfy A2(b), we require $n|(\ndp=n_0)=n_0+o_p(n_0)$, a  more lenient restriction. 
\end{example}

\subsection{ABC posterior convergence}\label{s:abc}
While the previous section developed results for the sampling distribution, a Bayesian analyst may be more interested in whether the posterior distributions $p(\theta|s,n=n_0)$ and $p(\theta|s,\ndp=n_0)$ are similar. While we were unable to get a direct result on these quantities, in Theorem \ref{thm:ABCprivacy}, we show that certain ``ABC-type'' posterior distributions do converge. 

\begin{restatable}{thm}{thmposteriorRectangle}\label{thm:posteriorRectangle}
Assume that $p(\theta)$ is a proper prior distribution, and that there exists a sequence of rectangles $R_{n_0}$ such that $p_{n_0}\defeq P(s\in R_{n_0}|n=n_0)$ satisfies both $p_{n_0}>0$ and $\mathrm{KS}_{n_0}/p_{n_0}\rightarrow 0$, where $\mathrm{KS}_{n_0} = \mathrm{KS}(p(s,\theta|n=n_0),p(s,\theta|\ndp=n_0))$. Suppose further that there exists $b'>0$, a sequence $\theta_{n_0}$, and a random variable $Y$ such that $n_0^{b'}(\theta-\theta_{n_0})|(n=n_0,s\in R_{n_0})\overset d\rightarrow Y$, which has continuous cdf $F_Y$. Then $n_0^{b'}(\theta-\theta_{n_0})|(\ndp=n_0,s\in R_{n_0})\overset d\rightarrow Y$ as well. Equivalently, $p(n_0^{b'}(\theta-\theta_{n_0})|n=n_0,s\in R_{n_0})\overset {KS}\sim p(n_0^{b'}(\theta-\theta_{n_0})|(\ndp=n_0,s\in R_{n_0}).$ 
\end{restatable}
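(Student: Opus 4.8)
The plan is to bound directly the KS distance between the two conditional laws of $\theta$ given the rectangle event $\{s\in R_{n_0}\}$, using the elementary fact that conditioning on such an event inflates distances by a factor of at most $1/p_{n_0}$ — which the hypothesis $\mathrm{KS}_{n_0}/p_{n_0}\to 0$ keeps bounded — and then to transfer this to the recentered/rescaled variable $n_0^{b'}(\theta-\theta_{n_0})$, which is only an affine reparametrization and hence does not change KS distances. To get started I would set $q_{n_0}\defeq P(s\in R_{n_0}\mid \ndp=n_0)$ and note that the event $\{s\in R_{n_0}\}$, viewed in the joint $(s,\theta)$-space, is the axis-aligned rectangle obtained as the product of $R_{n_0}$ with the full $\theta$-space; so the definition of $\mathrm{KS}_{n_0}$ gives $|p_{n_0}-q_{n_0}|\le \mathrm{KS}_{n_0}$. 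Together with $p_{n_0}>0$ and $\mathrm{KS}_{n_0}/p_{n_0}\to 0$, this forces $q_{n_0}\ge p_{n_0}/2>0$ for all large $n_0$ (so the conditional law under $\ndp=n_0$ is well defined for large $n_0$) and $\mathrm{KS}_{n_0}/q_{n_0}\le 2\,\mathrm{KS}_{n_0}/p_{n_0}\to 0$.

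The main step is the ratio estimate. For an arbitrary axis-aligned rectangle $R$ in $\theta$-space, $R_{n_0}\times R$ is again an axis-aligned rectangle, so writing $N(R)\defeq P(s\in R_{n_0},\,\theta\in R\mid n=n_0)$ and $\widetilde N(R)$ for the same probability under $\ndp=n_0$, one has $|N(R)-\widetilde N(R)|\le \mathrm{KS}_{n_0}$ and $0\le N(R)\le p_{n_0}$. Since $P(\theta\in R\mid n=n_0,s\in R_{n_0})=N(R)/p_{n_0}$ and $P(\theta\in R\mid \ndp=n_0,s\in R_{n_0})=\widetilde N(R)/q_{n_0}$, I would use the decomposition
\[
\frac{\widetilde N(R)}{q_{n_0}}-\frac{N(R)}{p_{n_0}}=\frac{\widetilde N(R)-N(R)}{q_{n_0}}+N(R)\Bigl(\frac{1}{q_{n_0}}-\frac{1}{p_{n_0}}\Bigr),
\]
and bound both terms by $\mathrm{KS}_{n_0}/q_{n_0}$, using $\bigl|1/q_{n_0}-1/p_{n_0}\bigr|=|p_{n_0}-q_{n_0}|/(p_{n_0}q_{n_0})\le \mathrm{KS}_{n_0}/(p_{n_0}q_{n_0})$ for the second term and $N(R)\le p_{n_0}$. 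Taking the supremum over $R$ then gives $\mathrm{KS}\bigl(p(\theta\mid n=n_0,s\in R_{n_0}),\,p(\theta\mid \ndp=n_0,s\in R_{n_0})\bigr)\le 2\,\mathrm{KS}_{n_0}/q_{n_0}\to 0$.

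To finish, I would note that $\theta\mapsto n_0^{b'}(\theta-\theta_{n_0})$ maps axis-aligned rectangles to axis-aligned rectangles, so it preserves KS distance; this is exactly the ``equivalently'' claim $p(n_0^{b'}(\theta-\theta_{n_0})\mid n=n_0,s\in R_{n_0})\overset{KS}\sim p(n_0^{b'}(\theta-\theta_{n_0})\mid \ndp=n_0,s\in R_{n_0})$. Specializing the KS bound to lower-left orthants, the cdf of $n_0^{b'}(\theta-\theta_{n_0})$ under $\ndp=n_0$ stays within $2\,\mathrm{KS}_{n_0}/q_{n_0}\to 0$ of the cdf under $n=n_0$ at every point; the latter converges to $F_Y$ at every point (all points are continuity points of $F_Y$ by assumption), hence so does the former, giving $n_0^{b'}(\theta-\theta_{n_0})\mid(\ndp=n_0,s\in R_{n_0})\overset{d}{\to}Y$.

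There is no deep obstacle here — the whole argument is one careful ratio bound — but two bookkeeping points deserve attention: (i) every set handed to $\mathrm{KS}_{n_0}$ must be a genuine axis-aligned rectangle in the joint space, which is why the argument only ever uses products $R_{n_0}\times R$ with $R$ a rectangle (taking $R$ to be the whole $\theta$-space, or an orthant, as needed); and (ii) the denominator $q_{n_0}$ must be kept away from $0$, which is precisely what $p_{n_0}>0$ together with $\mathrm{KS}_{n_0}/p_{n_0}\to 0$ guarantees. That rate condition is really the crux: it says the conditioning rectangles $R_{n_0}$ may not carry probability that vanishes faster than the rate at which the bounded- and unbounded-DP joint laws come together, so that the $1/p_{n_0}$ amplification incurred by conditioning is harmless.
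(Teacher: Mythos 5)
Your proposal is correct and is essentially the paper's own argument: both proofs control the conditional probability $P(\cdot\mid s\in R_{n_0},\ndp=n_0)$ as a ratio of joint to marginal probabilities, each perturbed by at most $\mathrm{KS}_{n_0}$ (the relevant events being rectangles in the joint $(s,\theta)$-space), so that the error is of order $\mathrm{KS}_{n_0}/p_{n_0}\to 0$. Your write-up is in fact slightly more complete than the paper's — you make the two-sided bound and the positivity of $q_{n_0}=P(s\in R_{n_0}\mid\ndp=n_0)$ explicit, and you obtain a uniform KS bound over all rectangles rather than only the one-sided cdf inequality the paper displays — but the underlying mechanism is the same.
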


We interpret the posterior distributions in Theorem \ref{thm:posteriorRectangle} as being similar to approximate Bayesian computing (ABC) posteriors, where we condition on the data lying in a certain set, rather than conditioning on the exact value (the rectangles would be most closely related to a weighted $\ell_\infty$ distance). Since the rectangles in Theorem \ref{thm:ABCprivacy} can be arbitrarily small, this gives some evidence that the exact posteriors are similar as well.

\begin{remark}
In Theorem \ref{thm:posteriorRectangle}, it is often the case that $b'$ is equal to $b$ defined in A1; see for example, \citet{frazier2018asymptotic}.  Another special case is to have $p_{n_0}$ constant and $\mathrm{KS}_{n_0}\rightarrow 0$ such as via Theorem \ref{thm:convergence}. For example, in Theorem \ref{thm:posteriorRectangle}, a  Bernstein-von Mises-style CLT could have that $\theta|(s\in R_{n_0},n=n_0)$ converges to a normal distribution in which case  $b=1/2$, $\theta_{n_0}=\theta_0$, and $Y$ is a normal distribution. {Furthermore, if $\sqrt {n_0} (s-s_0)|(n=n_0)\overset d\rightarrow Z$, where $Z$ is a continuous distribution with full support on $\RR^d$ (e.g., Gaussian)},  then we may take $R_{n_0} = R^0_{n_0}/\sqrt {n_0}+s_0$,  where $R_{n_0}^0$ is a non-trivial rectangle (meaning that it has positive Lebesgue measure). In this case,  $p_{n_0}\rightarrow P_{Z\sim N(0,\Sigma)}( Z\in R_{n_0}^0)>0$ and by assuming A1 and A2(b), we get that $\mathrm{KS}_{n_0}\rightarrow 0$, which ensures that $\mathrm{KS}_{n_0}/p_{n_0}\rightarrow 0$. 
\end{remark}


\section{Convergence as privacy budget grows}\label{s:epInf}

In this section, we consider a different asymptotic regime, where $n_0$ is held fixed, but the privacy budget for  $\ndp$ goes to infinity (noise goes to zero). Our results offer theoretical insights into exactly what properties are needed for the sampling distributions to converge when the privacy budget varies.

\subsection{Sampling and joint distributions}\label{s:likelihoodEp}
We will establish conditions for  either $\mathrm{TV}(p(s,n|\theta),p(s,\ndp|\theta))\rightarrow 0$ or $\mathrm{TV}(p(s|\theta,n=n_0),p(s|\theta,\ndp=n_0))\rightarrow 0$, where the limit is in terms of the privacy budgets for $s$ and $\ndp$.  The first quantity reduces to the probability that $\ndp\neq n$:

\begin{restatable}{prop}{propTVprivacy}\label{prop:TVprivacy}
We have $\mathrm{TV}(p(s,n|\theta),p(s,\ndp|\theta))\leq P(\ndp\neq n)$. If $\ndp$ takes values in $\ZZ$, Markov's inequality gives the upper bound $\EE_{n,\ndp}|\ndp-n|$.
\end{restatable}
\begin{proof}
    By the data processing inequality, we
    remove $s$ from both quantities. The distributions of $n$ and $\ndp$ do not depend on $\theta$. The coupling inequality gives the upper bound. 
\end{proof}

 To study $\mathrm{TV}(p(s|\theta,n=n_0),p(s|\theta,\ndp=n_0))\rightarrow 0$, we express the privacy budget of $s$ in terms of  $(0,\delta)$-DP, which is implies by all the main-line DP definitions (see Lemma \ref{lem:TVDP}).  Note that we are not advocating for $(0,\delta)$-DP to be used as a privacy guarantee for reporting purposes, as it is usually viewed as too weak of a framework. Rather, if someone is using $\epsilon$-DP, $(\epsilon,\delta)$-DP, $\mu$-GDP, or Renyi-DP, then no matter which framework is used, we can calculate a valid $(0,\delta)$-DP guarantee, which we then use in our results. For example, if the $s$ mechanism satisfies $\epsilon$-DP, then we would substitute $\delta=\frac{\exp(\ep)-1}{\exp(\ep)+1}$ in Theorem \ref{thm:TVprivacy}. 

\begin{restatable}{thm}{thmTVprivacy}\label{thm:TVprivacy}
Let $p(s|x)$ be the distribution of a $(0,\delta)$-DP mechanism, let $p(x|\theta,n)$ be a distribution for $x$, where  $x$ consists of $n$ copies of i.i.d. variables, and let $p(n)$ be a (possibly improper) prior on $n$. Then 
$\mathrm{TV}(p(s|\theta,n=n_0),p(s|\theta,\ndp=n_0))
\leq \delta  \EE_{n|\ndp=n_0} |n-n_0|,$ and  
 if $p(\theta)$ is a proper prior on $\theta$, then  
$\mathrm{TV}(p(s,\theta|n=n_0),p(s,\theta|\ndp=n_0))
\leq  \delta   \EE_{n|\ndp=n_0} |n-n_0|.$
\end{restatable}

In order for the total variation distances in Theorem \ref{thm:TVprivacy} to go to zero,  we need either $\delta\rightarrow 0$ or for $\EE_{n|\ndp=n_0} |n-n_0|\rightarrow 0$. The first option is asking for the privacy guarantee on $p(s|x)$ to strengthen, in which case $s$ ``remembers'' less about the data;  the second requires that the scale of the random variable $n|\ndp=n_0$ goes to zero or equivalently that the privacy budget of $p(\ndp|n)$ increases.  We focus our attention on the mechanism for $\ndp$. 

In the case of two popular mechanisms for $p(\ndp|n)$: discrete Laplace and discrete Gaussian  mechanisms, and assuming a flat improper prior for $p(n)$, Lemma A.12 in the Supplementary Materials verifies that when the privacy parameter goes to infinity, the expectation in Theorem \ref{thm:TVprivacy} goes to zero.

Similar to Theorem \ref{thm:posteriorRectangle}, we can analyze ABC-type posterior distributions as the privacy budget for $\ndp$ grows. With total variation convergence, we are not limited to rectangles as in Theorem \ref{thm:posteriorRectangle}, but can instead condition on arbitrary sets with positive probability.

\begin{restatable}{thm}{thmABCprivacy}\label{thm:ABCprivacy}
 Let $\left(\ndp(\gamma)\right)_{\gamma>0}$ be a collection of privacy mechanisms such that for any $\gamma>0$,  $\EE_{n|\ndp(\gamma)}|n-n_0|=\gamma$. Then under the same setup as Theorem \ref{thm:TVprivacy}, for any set $S$ such that $P(s\in S|n=n_0)>0$ and $P(s\in S|\ndp(\gamma)=n_0)>0$ for all $\gamma$, we have that $p(\theta|s\in S,\ndp(\gamma)=n_0) \overset{TV}\sim p(\theta|s\in S,n=n_0)$ as $\gamma \rightarrow 0$. 
\end{restatable}

\subsection{Convergence of the MLE}\label{s:MLE}

In this section, we consider the maximum likelihood estimator (MLE) in the unbounded DP setting,  given the privatized values $s$ and $\ndp$, which we refer to as the unbounded-DP MLE.  We  show that as the privacy budget grows, this MLE converges to the MLE under bounded DP (under suitable regularity conditions). 

This notion of the DP MLE has been previously considered in the literature. \citet{karwa2016inference} develop an algorithm to compute the MLE for private exponential random graph models, \citet{gong2022exact} propose a Monte Carlo EM algorithm to compute the DP MLE given privatized data in the bounded DP setting, and \citet{gaboardi2016differentially} use a two-step approximation of the MLE and comment that ``this is not equivalent to maximizing the full likelihood, but it seems to work well as our experiments later show'' -- seemingly indicating that if they were able, they may have preferred to maximize the full likelihood containing both data and privacy randomness. 

In bounded DP, the MLE of $\theta$ is $\hat{\theta}(n_0, \sdp) = \arg\max_\theta p(\sdp \mid n=n_0, \theta),$  where $p(s|n=n_0,\theta)$ is the marginal distribution of the private summary $s$, given the sample size $n=n_0$ and parameter $\theta$. 
In unbounded DP, however, the sample size is unknown. If we use a prior on $n$ that yields a proper posterior distribution $p(n \mid \ndp)$, then the MLE of $\theta$ is:
\begin{equation}
\label{eq:thetahat_ndp}
\hat{\theta}_{dp}(\ndp, \sdp) = \arg\max_{\theta} p(\sdp \mid \theta, \ndp)= \arg\max_{\theta} \sum_{k=1}^\infty p(n =k\mid \ndp)p(\sdp \mid n=k ,\theta).
\end{equation}

Here we show that the unbounded-DP MLE converges to the MLE we would get if the true sample size $n= n_0$ is known. The assumptions in Theorem \ref{thm:mleconsistency} are similar to those used in the Argmax Continuous Mapping Theorem \citep[Theorem 12.1]{sen2022gental}.

\begin{restatable}{thm}{thmmleconsistency}\label{thm:mleconsistency}
Fix sample size $n = n_0 > 0$. 
Suppose $\Theta\ni\theta$ is a metric space. 
Let the bounded DP likelihood be $L(\theta;s) \defeq p(s \mid \theta, n = n_0)$.
Write the unbounded DP likelihood as $L_{\epsilon}(\theta;\sdp,\ndp)\defeq \sum_{k=1}^{\infty} p(\sdp \mid \theta, n=k) p_{\epsilon}(n=k \mid \ndp)$, where $p_{\epsilon} (n \mid \ndp)$ is a proper posterior distribution   and $\epsilon$ is the privacy loss budget for the mechanism $p(\ndp\mid n)$. For each $\epsilon$, let $\hat{\theta}_{\epsilon}$ be a (potentially) random element of $\Theta$ such that 
    $
    L_{\epsilon}(\hat{\theta}_{\epsilon}; \ndp, \sdp) \ge \sup_{\theta \in \Theta} L_{\epsilon}(\theta; \ndp, \sdp) - o_{p}(1),$ where the $o_p(1)$ term is with respect to $\epsilon\rightarrow \infty$. Here $\hat{\theta}_{\epsilon}$ is the approximate argmax of the unbounded-DP likelihood. 
Suppose that the following conditions hold:
\begin{itemize}
  \setlength\itemsep{0em}
    \item Unique bounded DP MLE: almost all sample paths $L(\theta; \sdp)$ are upper semi-continuous and each  has a unique maximum point $\hat{\theta}(n_0, \sdp)$.
    \item Uniform convergence on compact sets:  for every compact subset $K$ of $\Theta$, we have for all $\theta\in K$, $L_{\epsilon}(\theta; \sdp,\ndp) \overset{d}{\to} L(\theta; \sdp)$ in $\ell^{\infty}(K)$ as $\epsilon \to \infty$. The probabilities are over the randomness in both $\sdp$ and $\ndp$.
    \item Tightness condition: for every $c > 0$, there exists a compact set $K_c \subset \Theta$ such that 
    $\limsup_{\epsilon \to \infty} \mathbb{P}(\hat{\theta}_{\epsilon} \not\in K_c) \le c, \quad \mathbb{P}(\hat{\theta}(n_0, s) \not\in K_c) \le c.$ 
\end{itemize}
Then, we have $\hat{\theta}_{\epsilon} \overset d \rightarrow \hat{\theta}(n_0 , s)$ as $\epsilon \to \infty$.
\end{restatable}

 Example A.13 in the Supplementary Materials works out an example to illustrate when the conditions in Theorem \ref{thm:mleconsistency} can be satisfied. This example enforces some structure on the non-private summary, such as being a location-family and unimodal, and the privacy mechanisms are assumed to be Laplace or Gaussian. More complex settings can also satisfy the assumptions of Theorem 4.3, but details would need to be checked case-by-case. 

\begin{remark}
    The tightness condition ensures that $\hat{\theta}_{\epsilon}$ and $\hat{\theta}_0$ lie in compact sets with high probability. Alternatively, we could have assumed that the parameter space $\Theta$ is compact.

    Note that while we use $\epsilon$ as the \emph{privacy budget}, we do not necessarily require $\epsilon$-DP, or any other particular privacy definition, as long as the stated assumptions hold. 
\end{remark}

\section{Computational techniques}\label{s:computational}
In this section, we develop computational methods to perform principled statistical inference on privatized data in the unbounded DP setting. In Section \ref{s:MCMC}, we develop a reversible jump MCMC for Bayesian inference and in Section \ref{s:MCEM}, we develop a Monte Carlo EM algorithm to estimate the MLE in unbounded DP. 

\subsection{A reversible jump MCMC sampler for unbounded DP}\label{s:MCMC}

The Metropolis-within-Gibbs sampler proposed by \citet{ju2022data} 
targets the private data posterior distribution $p(\theta \mid \sdp)$, with a runtime comparable to many non-private samplers. 
Their algorithm augments the parameter space with a latent database of size $n$ and therefore is limited to bounded DP. 
While some unbounded DP problems can be expressed in a way that do not explicitly depend on $n$, such as the multinomial problem in Section B of the Supplementary Materials, in general  
 there is a need to develop trans-dimensional MCMC methods to perform Bayesian inference under unbounded DP. We propose a modification to \citet{ju2022data}'s sampler for unbounded DP, based on reversible jumps, which enjoys 1) lower bounds on the acceptance probabilities when the privacy mechanisms satisfy $\ep$-DP and 2) either ergodicity or geometric ergodicity under regularity conditions. 

We aim to draw samples from the posterior distribution of $(n, \theta, x_{\{1:n\}})$ given the available information of $(\ndp,\sdp)$. Since it is not possible to sample directly from this posterior distribution, we rely on MCMC algorithms. However, we first notice that this posterior distribution is defined on a space that is the union of spaces with different dimensions. This is because if $n$ changes, the dimension of $(n, \theta, x_{\{1:n\}})$ also changes. Therefore, we need to rely on an MCMC algorithm that accounts for this aspect. For this reason, we use reversible jump MCMC \citep{green1995reversible}. To our knowledge, this is the first implementation of reversible jump MCMC techniques for inference on privatized data. 

For an introduction to reversible jump MCMC, we recommend \citet[Chapter 3]{brooks2011handbook}. Essentially, reversible jump MCMC  consists of ``within-model moves'' and ``between-model moves.'' In our case, the \emph{model} is determined by the sample size $n$.
\begin{itemize}
  \setlength\itemsep{0em}
    \item[-] \emph{Within-model move}: perform an  update to $(\theta, x_{\{1:n\}})$  
    using  one  cycle of the data augmentation MCMC method of \citet{ju2022data} with the current sample size $n$.
    \item[-] \emph{Between-models move}: propose to add/delete a row in the database, altering $n$ by 1.
\end{itemize}



We present a single round of our proposed RJMCMC sampler in Algorithm \ref{alg:RJMCMC}. When we cannot sample directly from $p(\theta|x_{1:n})$, another MCMC scheme with the correct target distribution can be substituted in this step, at the cost of slower mixing.

To ensure that the time to execute a single round of the RJMCMC is $O(n)$,  we assume that our privacy mechanism is \emph{record-additive} \citep{ju2022data}:
\begin{itemize}
  \setlength\itemsep{0em}
    \item A3 (Record Additivity) The privacy mechanism can be written in the form $p(\sdp \mid x) = g\left(\sdp, \sum_{i=1}^n t_i(x_i,\sdp)\right)$ for some known and tractable functions $g, t_i$.
\end{itemize}
Many popular mechanisms are record-additive, such as additive noise to summation statistics, as well as empirical risk minimization mechanisms, like objective perturbation \citep{Chaudhuri2011}. 
Record-additivity is important because it implies that when modifying a row of $x$, it only takes $O(1)$ time to update the value of $p(\sdp\mid x)$, by adding/subtracting at most two elements to $\sum_{i=1}^n t_i(x_i,\sdp)$. See Remark \ref{rem:recordadditivity} for more details.

In the case that the mechanisms for $s$ and $\ndp$ satisfy $\epsilon$-DP, we obtain lower bounds on the acceptance probabilities in Algorithm \ref{alg:RJMCMC}, similar to those in \citet{ju2022data}.

\begin{restatable}{prop}{propacceptance}\label{prop:acceptance}
Suppose that $s$ and $\ndp$ satisfy $\ep_s$-DP and $\ep_n$-DP, respectively, in the unbounded sense. The following hold for Algorithm \ref{alg:RJMCMC}: 
      1) the acceptance probability \eqref{eq:within} is lower bounded by $\exp(-2\epsilon_s)$, and 
      2) if $p(n)$ is a flat (improper) prior, then the acceptance probability \eqref{eq:between} is lower bounded by $\exp(-(\ep_s+\ep_n))$ for all $n\geq 2$; if $n=1$ and $n^*=2$, then the lower bound is instead $(1/2)\exp(-(\ep_s+\ep_n))$.
\end{restatable}

\begin{remark}\label{rem:recordadditivity}
    Similar to \citet{ju2022data},  record additivity allows us to ensure that one iteration of Algorithm \ref{alg:RJMCMC} only takes $O(n)$ time. Without record additivity, evaluating $p(s|x_{1:n})$ in \eqref{eq:within} and \eqref{eq:between} could also take $O(n)$ time, giving a total runtime of $O(n^2)$ per cycle. More precisely, in Algorithm \ref{alg:RJMCMC}, suppose that $g$, $t$, $p(n)$, and $p(\ndp|n)$ can all be evaluated in $O(1)$ time. Suppose further that $p(x|\theta)$ can be sampled in $O(1)$ time and $p(\theta|x_{1:n})$ can be sampled in $O(n)$ time.  Then, one iteration of Algorithm \ref{alg:RJMCMC} takes $O(n)$ time. 
\end{remark}

\begin{algorithm}[t]
\caption{One iteration of the privacy-aware Reversible-Jump MCMC sampler}
\label{alg:RJMCMC}
INPUT: Summaries $s$ and $\ndp$, functions $t$ and $g$ s.t. $p(s|x_{1:n})=g(s,\sum_{i=1}^n t(s,x_i))$, samplers for $p(\theta|x)$ and $p(x_i|\theta)$, initial dataset $x=(x_1,\ldots, x_n)$ and corresponding $t_x=\sum_i t(s,x_i)$.
\vspace{-.25cm}
\begin{enumerate}
  \setlength\itemsep{0em}
\item Within-model moves using Algorithm 1 of  \citet{ju2022data}:
\vspace{-.25cm}
\begin{enumerate}
  \setlength\itemsep{0em}
    \item Update $\theta\sim p(\theta|x)$.
    \item For $i = 1, \ldots, n$, sequentially update $x_i \mid x_{-i}, \theta, \sdp$ as follows:
    \begin{enumerate}
      \setlength\itemsep{0em}
      \vspace{-.2cm}
        \item Propose $x_i^* \sim p(x \mid \theta)$.
        \item Set $t_{x^*} = t_x-t(s,x_i)+t(s,x_i^*)$.
        \item Accept the move to $x_i=x_i^*$ with probability 
        \begin{equation}\label{eq:within}
   \min\left\{1,\frac{p(s|x^*)}{p(s|x)}\right\}=     \min\left\{1,\frac{g(s,t_{x^*})}{g(s,t_x)}\right\}.
        \end{equation}
    \end{enumerate}
\end{enumerate}
\vspace{-.25cm}
\item Between-models move: 
\vspace{-.5cm}
\begin{enumerate}
  \setlength\itemsep{0em}
\item Sample $n^*$ from the pmf 
$q(n^*|n)=\begin{cases}
    1&\text{if }n=1 \text{ and }  n^*=n+1\\
    1/2 & \text{if } n>1 \text{ and }    |n^*-n|=1\\
    0 & \text{otherwise}.
\end{cases}$
\vspace{-.5cm}
\begin{enumerate}
  \setlength\itemsep{0em}
        \vspace{-.1cm}
    \item If $n^*=n+1$,
    \begin{enumerate}
      \setlength\itemsep{0em}
        \item sample $x_{n+1}^*\sim p(x|\theta)$,
        \item set $x^*_{1:n^*} = (x_1,\ldots, x_n, x^*_{n+1})$
        and $t_{x^*} = t_x + t(s,x_{n+1}^*)$.
    \end{enumerate}
    \item Else if $n^*=n-1$,
    \begin{enumerate}
      \setlength\itemsep{0em}
        \item set $x^*_{1:n^*} = (x_1,\ldots, x_{n-1})$
        and $t_{x^*} = t_x - t(s,x_n)$ 
    \end{enumerate}
\end{enumerate}
\item Accept the move to $n=n^*$, $x_{1:n}= x^*_{1:n^*}$ and $t_x=t_{x^*}$ with probability
\end{enumerate}
\end{enumerate}
\begin{equation}\label{eq:between}
\hspace{-1.3cm}\min\left\{1,\frac{p(n^*) p(s|x^*)p(\ndp|n^*)q(n|n^*)}{p(n) p(s|x) p(\ndp|n) q(n^*|n)}\right\}=\min\left\{1,\frac{p(n^*) g(s,t_{x^*})p(\ndp|n^*)q(n|n^*)}{p(n) g(s,t_x) p(\ndp|n) q(n^*|n)}\right\}.
\end{equation}
 OUTPUT: $n$, $x_{1:n}$, $t_x$, $\theta$
\end{algorithm}

To ensure that a Markov chain has the correct limiting distribution and that empirical means from the chain have a Law of Large Numbers, it is important to establish that a chain is ergodic. In Proposition \ref{prop:ergodic} we give a sufficient condition to satisfy this property.
\begin{restatable}{prop}{propergodic}\label{prop:ergodic}
    Suppose that  
    1) the state space $\Theta\times \mscr X \times \mathbb N$ has a countably generated $\sigma$-algebra, 
    2) the following densities are bounded: $p(\theta)$, $p(x_i|\theta)$, and $p(s|x_{1:n})$, 
    3) for $i< j< k\in \NN$, if $p(n=i)>0$ and $p(n=k)>0$, then $p(n=j)>0$, 
    4) the supports of $p(x_i|\theta)$, $p(s|x_{1:n})$, and $p(\ndp|n)$ do not depend on the right-hand side. 
    Then, the RJMCMC of Algorithm \ref{alg:RJMCMC} is ergodic on the joint space $(\theta,x_{1:n},n)\in \Theta \times \mscr X\times \NN$, and has $p(\theta,x_{1:n},n|s,\ndp)$ as its unique limiting distribution. 
\end{restatable}

To quantify the estimation error from our chain with a Central Limit Theorem, geometric ergodicity is needed.  \citet{qin2025geometric} established that under mild conditions, if a chain is geometrically ergodic on the within-model moves, and there are a finite number of models, then the reversible jump MCMC is also geometrically ergodic.  Using this, we give conditions where Algorithm \ref{alg:RJMCMC} to be geometrically ergodic in Proposition \ref{prop:geometric}, leveraging \citet[Theorem 3.4]{ju2022data} which establishes geometric ergodicity for the within-model moves.   
 
\begin{restatable}{prop}{propgeometric}\label{prop:geometric}
Suppose that the conditions in Proposition \ref{prop:ergodic} hold, and further suppose 
1) $p(\theta)$ is a proper prior, 
2) $p(s|x)$ and $p(\ndp|n)$ satisfy $\ep_s$-DP and $\ep_n$-DP, respectively, 
3) there exists $a>0$ such that $p(x_i|\theta)>a$ for all $x_i$ and all $\theta$, and  
4) $p(n)$ has finite support. 
Then, the RJMCMC of Algorithm \ref{alg:RJMCMC} is geometrically ergodic. 
\end{restatable}

\begin{remark}[When $\ndp$ is not explicitly available]
Algorithm \ref{alg:RJMCMC} assumes that $\ndp$ is available and is independent of the other DP summaries $s$. However, there are situations where there is no explicit $\ndp$ term and $s$ contains all of the private summaries. In this case, Algorithm \ref{alg:RJMCMC} can be modified by removing $p(\ndp|\cdot)$ from the acceptance probability.  
\end{remark}

\subsection{Monte Carlo expectation maximization}\label{s:MCEM}
We develop a Monte Carlo expectation maximization (MCEM) algorithm to compute the MLE of $\theta$ given $(s,\ndp)$. Since the method does not explicitly depend on the separation between $s$ and $\ndp$, we will use $s$ as shorthand for the pair $(s,\ndp)$ in this section. 

This method iterates between the E-step, which estimates an expectation over the joint posterior distribution $p(x_{1:n},n|s,\theta^{(t)})$, approximated by our RJMCMC algorithm, and the M-step which maximizes the expectation calculated in the E-step or at least do a gradient step in the direction of maximization.  The method is summarized in Algorithm \ref{alg:MCEM}.

Since $x_{1:n}$ is unavailable due to privacy protection, the likelihood function 
\vspace{-.25cm}
$$\log p(\sdp\mid n,\theta) = \log \Bigg( \sum_{x_{1:n} \in \mscr X_1^n} p(\sdp \mid x_{1:n}) p(x_{1:n} \mid n, \theta) \Bigg) $$
\vspace{-.1cm}
is intractable due to the integration over $\mscr X_1^n$. Also, since $n$ is unknown, we additionally model $n$ with some (prior) distribution $p(n)$ and the marginal likelihood function is 
$$p(\sdp \mid \theta) =\sum_{n} p(n) p(\sdp \mid n, \theta) = \sum_{n}  p(n) \int_{x_{1:n}\in \mscr X_1^n} p(\sdp \mid x_{1:n}) p(x_{1:n} \mid n, \theta) dx.$$
We propose using an EM algorithm to estimate the MLE, $\hat{\theta} = \arg\max p(\sdp | \theta).$
The EM algorithm works with the complete data likelihood 
$ p(\sdp, x_{1:n}, n \mid \theta) = p(\sdp \mid x_{1:n}) p_{\theta}(x_{1:n} \mid n) p(n).$ 
In the E-step, the Q-function is the conditional expectation of complete-data log-likelihood given the current parameter estimate $\theta^{(t)}$:
\vspace{-.25cm}
\begin{align*}
    Q(\theta; \theta^{(t)}) &= \mathbb{E}_{x_{1:n},n \sim p(\cdot \mid \sdp,\theta^{(t)})}\left[\log p(\sdp,x_{1:n},n \mid \theta)\right] \\
    &= \sum_{n}\int_{\mscr X_1^n} \log p(x_{1:n}|\theta) p(x_{1:n}, n \mid \sdp, \theta^{(t)}) d x_{1:n}+ \text{const.}\\
    &= \mathbb{E}_{x_{1:n},n \sim p(\cdot \mid \sdp,\theta^{(t)})}\left[\sum_{i=1}^n \log p(x_i \mid \theta)\right]+ \text{const.}
\end{align*}
As we can see, the E-step involves the joint posterior distribution $p(x_{1:n}, n \mid \sdp, \theta^{(t)})$, which can be approximated with our reversible jump sampler. 
The MC approximation for $Q$ is
$$Q_{m}(\theta) = \frac{1}{m}\sum_{j=1}^{m} \log p(x^{(j)}_{1:n^{(j)}}|\theta)= \frac{1}{m} \sum_{j=1}^m \sum_{i=1}^{n^{(j)}} \log p(x_i^{(j)}|\theta),$$
where for $j=1,\ldots, m$, each $(x^{(j)}_{1:n^{(j)}},n^{(j)})$ is a sample from the joint posterior distribution $p(x_{1:n}, n \mid \sdp, \theta^{(t)})$, from the RJMCMC sampler.

In the M-step, we approximate $\arg\max Q(\theta)$ with $\arg\max Q_m(\theta)$, resulting in $\theta^{(t+1)} = \arg\max Q_m(\theta)$. 
For non-trivial models, the M-step may require numerical optimization.

Recall that it is sufficient to have $\theta^{(t+1)}$ such that $Q_{m}(\theta^{(t+1)}, \theta^{(t)}) > Q_{m}(\theta^{(t)}, \theta^{(t)})$ for convergence to a local maximum \citep{neal1998view}. 
As a result, 
we can instead use a gradient descent step:
$\theta^{(t+1)} \gets \theta^{(t)} + \tau_t\sum_{j=1}^m \sum_{i=1}^{n^{(j)}} \partial_{\theta} \log p(x_i^{(j)}|\theta^{(t)}),$ where $\tau_t$ is the learning rate sequence. Such a gradient descent step can be efficiently implemented for exponential family  models for the data $x_{1:n}$.
This gradient descent step imposes another assumption: The E-step requires us to sample from the model $p(x|\theta)$ and evaluate its density, and the M-step  requires us to evaluate the gradient of the log-density.


\begin{algorithm}[t]
\caption{One iteration of MCEM}\label{alg:MCEM}
\begin{algorithmic}
\REQUIRE DP summary $s$, current values of $\theta^{(t)}$, sample size $n$, data set $x_{1:n}$, and learning rate sequence $\tau_t$.
\STATE 1. E-step: obtain the RJMCMC sample from the joint posterior distribution  $p(x_{1:n}, n \mid \sdp,\theta^{(t)})$.
\STATE  2. M-step: $\theta^{(t+1)} \gets \theta^{(t)} + \tau_t \sum_{j=1}^m \sum_{i=1}^{n^{(j)}} \partial_{\theta} \log p(x_i^{(j)}|\theta).$
\end{algorithmic}
 OUTPUT: $\theta^{(t+1)}$
\end{algorithm}

\begin{example}[Exponential Family] $x_i \overset{iid}{\sim} p(x\mid \theta) = h(x) \exp( \eta(\theta) \cdot T(x) + A(\theta))$. 
To implement the M-step efficiently, 
one needs to solve the equation  
$ \nabla \eta(\theta) \left(\sum_j \sum_i T(x_i^{(j)})\right) = \nabla A(\theta).$ The gradient descent step is  
$ \theta^{(t+1)} \gets \theta^{(t)} + \tau_t\left[\nabla \eta(\theta^{(t)}) \left(\sum_j \sum_i T(x_i^{(j)})\right) - \nabla A(\theta^{(t)}) \right].$
\end{example}

\begin{remark}
    Algorithm \ref{alg:MCEM} 
    can be easily modified to estimate the MLE in bounded DP: The E-step can be changed to a sampler for $p(x_{1:n}|\sdp,\theta^{(t)})$, such as the method in \citet{ju2022data}; the M-step is the same except that $n^{(j)}$ is replaced with the public value of $n$. 
\end{remark}

\section{Numerical experiments}\label{s:simulations}
We implement the RJMCMC and MCEM algorithms to explore their performance through a simulated linear regression setting as well as a real data example.

\subsection{Linear regression simulations }\label{s:linear-regression}
\begin{table}[t]
\centering

\begin{tabular}{lrrrrrr}
\hline
$\ep_n=$        & 0.001    & 0.01    & 0.1     & 1       & 10       & Inf      \\ \hline
$\EE(\beta_0)$  & -0.716   & -0.729  & -0.705  & -0.730  & -0.707   & -0.689   \\
$\var(\beta_0)$ & 6.098    & 1.263   & 1.178   & 1.118   & 1.184    & 1.117    \\
$\EE(\beta_1)$  & -0.568   & -0.507  & -0.540  & -0.550  & -0.457   & -0.523   \\
$\var(\beta_1)$ & 5.559    & 1.024   & 1.053   & 0.942   & 0.936    & 0.879    \\
$\EE(\beta_2)$  & 0.665    & 0.517   & 0.541   & 0.514   & 0.551    & 0.490    \\
$\var(\beta_2)$ & 3.797    & 0.925   & 0.992   & 0.884   & 0.992    & 0.787    \\
$\EE(\tau)$     & 1.052    & 1.057   & 1.025   & 1.042   & 1.026    & 1.052    \\
$\var(\tau)$    & 0.633    & 0.623   & 0.531   & 0.548   & 0.535    & 0.544    \\
$\EE(n)$        & 1116.219 & 987.717 & 998.707 & 999.874 & 1000.000 & 1000.000 \\
$\var(n)$       & 1005.443 & 762.425 & 145.776 & 1.924   & 0.005    & 0.000    \\ \hline
\end{tabular}

\vspace{.25cm}

\begin{tabular}{lrrrrrr}
\hline
$\ep_n=$        & 0.001    & 0.01     & 0.1     & 1       & 10       & Inf      \\ \hline
$\EE(\beta_0)$  & -0.282   & -0.131   & -0.115  & -0.134  & -0.134   & -0.127   \\
$\var(\beta_0)$ & 0.411    & 0.287    & 0.317   & 0.309   & 0.307    & 0.304    \\
$\EE(\beta_1)$  & -0.921   & -0.942   & -0.952  & -0.946  & -0.946   & -0.956   \\
$\var(\beta_1)$ & 0.205    & 0.190    & 0.198   & 0.196   & 0.197    & 0.195    \\
$\EE(\beta_2)$  & 0.777    & 0.880    & 0.886   & 0.875   & 0.873    & 0.867    \\
$\var(\beta_2)$ & 0.253    & 0.272    & 0.273   & 0.265   & 0.273    & 0.253    \\
$\EE(\tau)$     & 1.234    & 1.095    & 1.107   & 1.081   & 1.090    & 1.093    \\
$\var(\tau)$    & 0.351    & 0.291    & 0.296   & 0.274   & 0.286    & 0.293    \\
$\EE(n)$       & 1217.458 & 1000.015 & 999.109 & 999.871 & 1000.000 & 1000.000 \\
$\var(n)$       & 4051.367 & 258.795  & 66.134  & 1.665   & 0.005    & 0.000    \\ \hline
\end{tabular}  

\caption{Linear regression posterior mean and variance simulation results using $\ep_s=.1$ (top) or $\ep_s=1$ (bottom). Results are averaged over 100 replicates in each setting (one failed replicate for $\ep_n=.001$ with both $\ep_s=.1$ and $\ep_s=1$). 
Chains were run with 10,000 iterations, and used a burn-in of 5,000.
Note that $\ep_n=\mathrm{Inf}$ corresponds to bounded DP.}
\label{tab:sim01}
\end{table}

In this section, we analyze a linear regression problem, in the setting of unbounded DP where the sample size $n$ is not publicly known. 
The DP summary statistics are similar to those used in \citet{ju2022data}, but unlike their example, we use a full prior model, whereas they assumed that the values of many nuisance parameters were known. Specifically, we model the original data as $y_i|x_i,\beta,\tau \sim N((1,x_i)\beta,\tau^{-1}I)$ and $x_i|\mu,\Phi \sim N_p(\mu,\Phi^{-1})$. We use priors $\beta|\tau \sim N_{p+1}(m,\tau^{-1}V^{-1})$, $\tau \sim \mathrm{Gamma}(a/2,b/2)$ (using the shape-rate parametrization), $\Phi\sim \mathrm{Wishart}_p(d,W)$, and $\mu\sim N(\theta,\Sigma)$.  
In total, the parameters are $(\beta, \tau, \mu, \Phi)$,  
the hyperparameters are $(m, V, a, b, \theta, \Sigma, d, W)$,
and sufficient statistics are $\left(X^\top X, X^\top Y, Y^\top Y\right)$, where $X$ is a matrix whose rows are $(1,x_i)$ and $Y$ is a vector whose entries are the $y_i$'s.

Our DP summary statistics are produced by the following procedure. We assume that the data curator has  assumed bounds on the $x_{i,j}$'s and $y_i$'s,  $[L,U]$ (While $L$ and $U$ could vary based on the covariate/response, we hold them fixed here for simplicity). 
 The mechanism first clamps the data to lie within the pre-specified bounds and normalize the $x_{i,j}$'s and $y_i$'s to lie in $[-1,1]$ using the following map: $f(x;L,U) = \frac{2([x]_L^U-L)}{U-L}-1$, where $[x]_L^U = \min\{U,\max\{x,L\}\}$ is the clamping function. Call $\twid x_{i,j}$ and $\twid y_i$ the output of this clamping-normalization procedure and call $\twid X$ the matrix whose rows are $(1,\twid x_{i})$ and $\twid Y$ the vector of the $\twid y_i$'s.  
After this, we add independent Laplace noise to each of the unique entries of $\twid X\twid X^\top$, $\twid X \twid Y$, and $\twid Y^\top \twid Y$, excluding the $(1,1)$ entry of $\twid X\twid X^\top$, which encodes the sample size $n$. 
Using unbounded DP, the $\ell_1$ sensitivity of these unique entries is $\Delta = p^2/2+(5/2)p+2$, where $p$ is the number of covariates  (see Section C.1 of the Supplementary Materials for a derivation). 
Call $s$ the result of adding independent $\mathrm{Laplace}(0,\Delta/\ep_s)$ noise to the unique entries of $\twid X\twid X^\top$, $\twid X \twid Y$, and $\twid Y^\top \twid Y$, excluding the $(1,1)$ entry of $\twid X\twid X^\top$.  
We privatize $n$ as $\ndp = n + \mathrm{Laplace}(1/\ep_n)$.  Note that clamping is used to calculate the summaries, which serve as the location parameters for the Laplace privacy mechanism $\eta$, whereas the data model $p(\cdot|\theta)$ is for the original, unclamped data, which are jointly normally distributed. By the CLT, summaries $s$ satisfy Assumption A1 with parameters $a=1$ and $b=1/2$, since $s$ consists of sums of bounded and independent entries and the additive noise is of order $O_p(1/n)$. A2 holds by Lemma A.2, since $(n-n_0|\ndp=n_0)=O_p(1/\ep)=o_p(n^{1/2})=o_p(n_0^{1-b})$, provided that $\epsilon=\omega(n^{-1/2})$. Finally, A3 holds since the statistics are sums.

We use the following hyperparameters: $L=-5$, $U=5$, $p=2$, $m=(0,0,0)^\top$, $V=I_{p+1}$, $a=2$, $b=2$, $\theta=(0,0)^\top$, $\Sigma=I_p$, $d=2$, $W=I_p$. We use $\ep_s\in\{.1,1\}$, and vary $\ep_n \in \{.001,.01,.1,1,10,\text{Inf}\}$, where $\ep_n=\text{Inf}$ means that $n$ is released without noise (effectively $2\ep_s$-DP under bounded DP). 
We use a sample size of $n = 1000$ and run our RJMCMC for 10,000 iterations with a burn-in of 5,000. Each replicate uses the same dataset but includes new privacy noise and a new Markov chain. Data were generated under the model
$y_i \mid x_i, \beta, \tau \sim N((1, x_i)\beta, \tau^{-1} I)$
and
$x_i \mid \mu, \Phi \sim N_p(\mu, \Phi^{-1})$,
with $\beta = (\beta_0, \beta_1, \beta_2)^\top = (0, -1, 1)^\top$, $\tau = 1$, $\mu = (-1, 1)^\top$, and $\Phi=I_2$. There was one failed chain in both settings of $\ep_n=.001$, which were omitted in our summary.  Table \ref{tab:sim01} contains estimates of the posterior mean and posterior variance for the regression parameters, averaged over the 100 replicates. Monte Carlo standard errors are in Table C1 in the Supplementary Materials. 

\begin{table}[t]
\centering
\begin{tabular}{l||c|c|c|c}
  \hline
 $\ep_n$& $\beta_0$ & $\beta_1$ & $\beta_2$ & $\tau$ \\
 \hline
 0.001 & -0.400 (0.751) &  -0.977 (0.579) &  0.770 (0.702)   &  1.17   (0.710)\\
 0.01 & -0.192 (0.433) & -0.914 (0.408) & 0.847  (0.390) &  1.01    (0.426)\\
 0.1 & -0.111 (0.374) & -0.968 (0.345) & 0.871 (0.365) &  1.14    (0.562) \\
 1 & -0.115 (0.369) & -0.952 (0.338) & 0.883 (0.387) &   1.17    (0.664)\\

 10 & -0.131 (0.435) &-0.941  (0.352) & 0.875  (0.465) &  1.15  (0.571)\\
 Inf &  -0.167 (0.434) & -0.955 (0.385) & 0.828 (0.483) &    1.14    (0.554)\\ 
  \hline
\end{tabular}
\caption{Average and standard deviation (in parentheses) of MLE $\hat{\beta}$ and $\hat{\tau}$ values for linear regression. For a fixed dataset of size $n = 1000$, we generate $(\sdp,\ndp)$ using $\ep_S=1$ and varying $\ep_n$. Results are over 100 replicates. MCEM chains were run with 10,000 iterations, and used a burn-in of 30\%.
Note that $\ep_n=\mathrm{Inf}$ corresponds to bounded DP.}
\label{tab:mcem}
\end{table}

In the top of Table \ref{tab:sim01}, we have results for $\ep_s=.1$, and see that the posterior means and variances of $\beta$ and $\tau$ are very similar for $\ep_n\geq .01$. At $\ep_n=.001$ we have an inflated posterior variance for $\beta$. However,  we do have higher uncertainty about $n$ when using $\ep_n\leq .1$. In the bottom of Table \ref{tab:sim01}, the results are even more stable for $\ep_s=1$ . As in the $\ep_1=.1$ case, the greatest uncertainty is in $n$ which appears for $\ep_n\leq .1$. Interestingly, by increasing $\ep_s$ from .1 to 1, we see a reduction in the posterior variance of $n$ even when holding $\ep_n$ fixed. This illustrates how the Bayesian posterior can learn about $n$ from the other summaries. 

In Table \ref{tab:mcem} we conduct a similar experiment, using the MCEM algorithm to estimate the parameters $\beta_0$, $\beta_1$, $\beta_2$, and $\tau$, where $\ep_s=1$.  We use the closed-form solutions for the M-step, which are the non-private MLEs for the normal model and linear regression. While we were not able to verify the assumptions in Theorem \ref{thm:mleconsistency}, we see that the MCEM algorithm also offers accurate estimates. As in Table \ref{tab:sim01}, the estimates are similar for $\epsilon_n \ge 0.1$ while the estimates at privacy level $\epsilon_n = 0.001$ have higher bias and more uncertainty.  



To our knowledge, no existing DP methods handle unknown sample size in a principled way. For comparison, we use a version of the parametric bootstrap \citep{ferrando2022parametric}, adapted by \cite{barrientos2024feasibility}, which uses a plug-in DP estimate of $n$, needed to estimate $\tau^{-1}$.  The results are included in Section C.2 of the Supplementary Materials, and show  unstable results, especially with small privacy parameters.

\begin{figure}[ht!]
\vspace{-.75cm}
    \centering
    \includegraphics[width=.48\linewidth, page = 1]{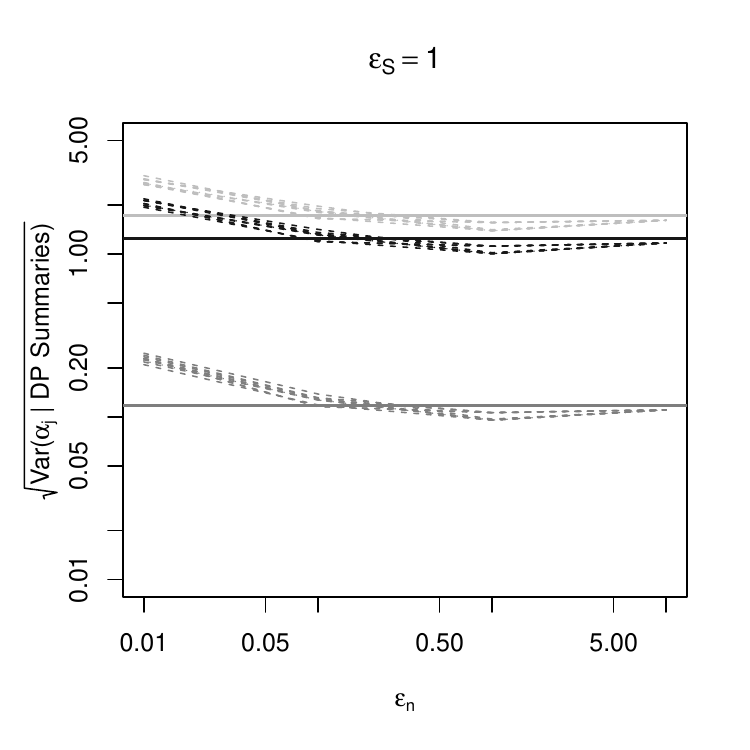}
    \includegraphics[width=.48\linewidth, page = 2]{ATUS_figures.pdf}

\vspace{-.6cm}
    
    \includegraphics[width=.48\linewidth, page = 4]{ATUS_figures.pdf}
    \includegraphics[width=.48\linewidth, page = 5]{ATUS_figures.pdf}
    
    \vspace{-.6cm}
    \caption{ATUS application: First column shows posterior standard deviations for $\alpha_1$, $\alpha_2$, and $\alpha_3$ with varying $\epsilon_{s}$ and $\epsilon_{n}$. Horizontal solid lines represent values under bounded DP, while the 10 dashed lines represent standard deviations for 10 realizations of $\ndp$. Second column shows box plots for the posterior distribution of $n$ under unbounded DP, with the horizontal line indicating the actual sample size. Each box plot combines draws for $n$ from 10 posterior distributions, each corresponding to one of the 10 realizations of $\ndp$ .}
    \label{fig:ATUS1}
\end{figure}

\subsection{Real data application}\label{s:realData}

We analyze data from the publicly available 2019 American Time Use Survey (ATUS) Microdata File, which can be accessed at \href{https://www.bls.gov/tus/datafiles-2019.htm}{https://www.bls.gov/tus/datafiles-2019.htm}. The ATUS dataset, collected and maintained by the U.S. Bureau of Labor Statistics, records the daily time allocation for various activities (such as personal care, household tasks, etc.) throughout 2019. The dataset is comprised of 9,435 records, released on July 22, 2021.


\cite{guo2024differentially} introduce differentially private Bayesian methods for analyzing compositional data, using the ATUS dataset. They focus on proportions of daily time spent on personal care ($x_{i1}$), eating and drinking ($x_{i2}$), and all other activities ($x_{i3}$), resulting in 6,656 observations after preprocessing. They model the data with a Dirichlet distribution with parameters $(\alpha_1, \alpha_2, \alpha_3)$. We aim to compare their bounded DP approach with our unbounded one, using the same prior specification: $\alpha_j \overset{i.i.d.}{\sim}\mathrm{Gamma}(1.0, 0.1)$ for $j = 1, 2, 3$. 

For the  experiment, we work with fixed realizations of $\sdp$, each one generated using a privacy budget $\epsilon_{s} \in \{1, 10\}$ and study how the results vary with different realizations of $\ndp$. 
We define
$
\sdp = \sum_{i=1}^n \left[\log\left([x_{i1}]_a^1\right), \log\left([x_{i2}]_a^1\right), \right.$ $\left. \log\left([x_{i3}]_a^1\right) \right] + L_1,
$
where $L_1 \sim \mathrm{Laplace}(0, -3 \log(a)/\ep_s)$, setting $a = \min_{ij} x_{ij} = 0.0006$ as if an oracle had provided this value.  For $\ndp$, we use a privacy budget of $\epsilon_{n} \in \{0.01, 0.1, 1, 10\}$ and generate 10 realizations using the Laplace mechanism with sensitivity equal to $1$. To sample from the posterior distribution for $(\alpha_1, \alpha_2, \alpha_3)$ under bounded DP, we utilize the algorithm outlined in \cite{ju2022data}, while for unbounded DP, we employ our proposed RJMCMC algorithm.

Figure \ref{fig:ATUS1} presents the obtained results. Since both bounded and unbounded DP use the same $\sdp$, the only varying factor is the availability of $n$ in the former and $\ndp$ in the latter. Thus, the results for bounded DP serve as the benchmark for those under unbounded DP. In the first column of Figure \ref{fig:ATUS1}, we observe that the posterior standard deviation of the parameters under unbounded DP decreases in $\epsilon_n$, converging to those under bounded DP. Additionally, as $\epsilon_s$ increases, the results become more accurate, but the value of $\epsilon_n$ required for unbounded DP to match its benchmark must also increase. 
In the second column of Figure \ref{fig:ATUS1}, we show the posterior distribution of $n$ under unbounded DP. As expected, we see that when $\epsilon_n$ increases, the uncertainty about $n$ decreases. Interestingly, in the case of $\epsilon_n=.01$, we observe that the variability of the posterior distribution of $n$ slightly decreases as $\epsilon_s$ goes from 1 to 10, showing that there is some additional learning about $n$ when $\sdp$ becomes more accurate.  This result is consistent with the results of Section \ref{s:linear-regression}. Additional results for this example are provided in Figures C2 and C3 in the supplementary materials.


\section{Discussion}\label{s:discussion}
 In this paper, we studied how frequentist and Bayesian distributions differ between the known and unknown sample size settings. Our theoretical analyses offer some valuable insights: While it is common in the DP literature to equally split the privacy loss budget between statistics, our results in Section \ref{s:sampleInf} indicate that we can use a vanishing amount of the privacy loss budget to estimate the sample size $n$, and still asymptotically obtain the same sampling distribution. Furthermore, the necessary rate of $n_{dp}-n$ varies depending on the nature of the asymptotic distribution of the other DP summary $s$, which could impact the choice of mechanism for $s$ (see Example 3.2). Without any known asymptotic structure about $s$ (such as A.1), the requirement on $n_{dp}$ becomes more stringent and we show in Section \ref{s:epInf} that the privacy budget of $n_{dp}$ must go to infinity in order to recover the sampling distribution with known sample size. Finally, our numerical algorithms enable valid statistical inference under unbounded DP, and the experiments demonstrate the cost of not knowing $n$. As predicted by our theoretical results in Section \ref{s:sampleInf}, our numerical results also show that this cost is typically small for a wide range of privacy parameters for $n_{dp}$.



 While our focus is on unbounded DP, the proposed algorithms may also be useful in bounded DP settings. For example, when analyzing a subset of a dataset under bounded DP, the sample size of the subset may itself be private. In such cases, with relatively straightforward adjustments, our approach can be applied to account for the randomness introduced when using a privatized version of that sample size.


Our Bayesian theory as $n_0\rightarrow \infty$ is limited in that it only considers ABC-type posterior distributions. It would be more ideal to analyze the posterior distribution that directly conditions on the observation itself, rather than this type of event. Potentially, the theory of the convergence of ABC posteriors could be used to bridge this gap \citep{frazier2018asymptotic}.

Another limitation of our theory is that the convergence of the MLE is only with respect to the privacy budget. We suspect that the MLE is also a consistent estimator as $n\rightarrow\infty$, but other analysis techniques are required to establish this claim.  In general, the asymptotic properties of the DP MLE have not been studied even in the bounded DP setting.


 This paper, as well as other prior works \citep{gong2022exact,ju2022data,chen2025particle,awan2025simulation}, assumed that a DP statistic $s$ was available and focused on performing inference based on $s$. However, an important question is how to choose a DP statistic $s$ to optimize the  inference. This has been partially addressed in some specific settings, such as exponential families \citep{bernstein2018differentially,ferrando2022parametric} and linear regression \citep{bernstein2019differentially},  but it is generally an open problem.

  While this paper was focused on how frequentist and Bayesian distributions differ when the sample size is known or unknown, a related question is the fundamental tradeoff due to the unbounded framework versus the bounded DP framework. To our knowledge, prior DP minimax results are in the bounded DP framework \citep{Barber2014,cai2019cost,cai2020cost}. Since unbounded DP implies bounded DP (up to a factor in the privacy parameter), if an unbounded mechanism has a rate that matches the bounded DP lower bound, then this is sufficient to establish that the unbounded DP rate is the same as the bounded DP rate. However, we are not aware of prior results that show either a gap between the bounded and unbounded DP rates, or prove that the rates are the same.

\bibliographystyle{apalike}
\bibliography{bibliography}

\appendix

In this Supplementary Materials document, we include proofs and technical details in Section \ref{s:proofs}, an example of privatized multinomial data in Section \ref{s:multi}, and additional numerical results in Section \ref{s:extraSim}.
 \section{Proofs and technical lemmas }\label{s:proofs}
 \subsection{Conditions for assumptions to be satisfied}
\begin{restatable}{lem}{lemflat}\label{lem:flat}
Let $p(n)$ be a flat prior on $\{1,2,\ldots\}$. Suppose that the privacy mechanism $\eta(\ndp|n)\defeq p(\ndp|n)$ is a pmf where $n\in \NN$ and $\ndp\in \ZZ$, and satisfies $\eta(n_1|n_2)=\eta(n_2|n_1)$ for all $n_1,n_2\in \NN$. Then if $n_0\in \mathbb{N}$ and $(\ndp-n_0|n=n_0)=O_p(a_{n_0})$, where $a_{n_0}=o(n_0)$, then $(n-n_0|\ndp=n_0)=O_p(a_{n_0})$ as well. 
\end{restatable}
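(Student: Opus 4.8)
The plan is to compute the posterior $p(n\mid\ndp=n_0)$ in closed form and recognize it as a truncated, renormalized copy of the noise law $\eta(\cdot\mid n_0)$. Since $p(n)$ is flat on $\{1,2,\ldots\}$, Bayes' rule gives, for $k\in\NN$,
\[
p(n=k\mid\ndp=n_0)=\frac{\eta(n_0\mid k)\,p(n=k)}{\sum_{j=1}^\infty\eta(n_0\mid j)\,p(n=j)}=\frac{\eta(n_0\mid k)}{\sum_{j=1}^\infty\eta(n_0\mid j)},
\]
and the symmetry hypothesis $\eta(n_0\mid k)=\eta(k\mid n_0)$ rewrites this as
\[
p(n=k\mid\ndp=n_0)=\frac{\eta(k\mid n_0)}{\sum_{j=1}^\infty\eta(j\mid n_0)}=\frac{\eta(k\mid n_0)}{P(\ndp\geq 1\mid n=n_0)}.
\]
In words: conditionally on $\ndp=n_0$, the random variable $n$ is distributed as $(\ndp\mid n=n_0)$ conditioned to land in $\NN=\{1,2,\ldots\}$.

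Next I would control the normalizing constant $c_{n_0}\defeq P(\ndp\geq 1\mid n=n_0)=1-P(\ndp\leq 0\mid n=n_0)$. Fix any $M>0$; because $a_{n_0}=o(n_0)$ we have $n_0>Ma_{n_0}$ for all large $n_0$, so
\[
P(\ndp\leq 0\mid n=n_0)\leq P\big(|\ndp-n_0|\geq n_0\mid n=n_0\big)\leq P\big(|\ndp-n_0|>Ma_{n_0}\mid n=n_0\big),
\]
and by the hypothesis $(\ndp-n_0\mid n=n_0)=O_p(a_{n_0})$ the right-hand side can be made arbitrarily small by taking $M$ large; in particular $c_{n_0}\to 1$, so $c_{n_0}\geq 1/2$ for all sufficiently large $n_0$.

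Finally I would bound the posterior tail. Given $\epsilon>0$, use $(\ndp-n_0\mid n=n_0)=O_p(a_{n_0})$ to pick $M$ with $P(|\ndp-n_0|>Ma_{n_0}\mid n=n_0)<\epsilon/2$ for all large $n_0$. Then, using the explicit posterior above,
\[
P\big(|n-n_0|>Ma_{n_0}\mid\ndp=n_0\big)=\frac{1}{c_{n_0}}\!\!\sum_{\substack{k\geq 1\\ |k-n_0|>Ma_{n_0}}}\!\!\eta(k\mid n_0)\;\leq\;\frac{P\big(|\ndp-n_0|>Ma_{n_0}\mid n=n_0\big)}{c_{n_0}}\;\leq\;\frac{\epsilon/2}{1/2}=\epsilon
\]
for all large $n_0$, which is precisely $(n-n_0\mid\ndp=n_0)=O_p(a_{n_0})$. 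The argument is short and carries no deep obstacle; the only point requiring care is the truncation of the noise law from $\ZZ$ to $\NN$ (since $n\geq 1$ while $\ndp$ ranges over all integers), and controlling the mass lost to that truncation is exactly where the assumption $a_{n_0}=o(n_0)$ is used, guaranteeing that $c_{n_0}$ stays bounded away from $0$.
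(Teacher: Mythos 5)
Your proposal is correct and follows essentially the same route as the paper's proof: both compute the posterior via Bayes' rule with the flat prior, use symmetry to identify $p(n=k\mid \ndp=n_0)$ with the noise law $\eta(k\mid n_0)$ truncated to $\NN$, and use $a_{n_0}=o(n_0)$ to control the mass lost to that truncation. The only cosmetic difference is that the paper lower-bounds $P(|n-n_0|\leq Ma_{n_0}\mid \ndp=n_0)$ directly via $p(n=k\mid\ndp=n_0)\geq\eta(k\mid n_0)$ and an inclusion--exclusion step, whereas you upper-bound the complementary tail after showing the normalizing constant stays above $1/2$; both arguments are valid.
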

\begin{proof}
Let $n_0\in \NN$. The posterior pmf of $n|\ndp=n_0$ is 
\begin{align*}
    p(n|\ndp=n_0)&=\frac{p(\ndp=n_0|n)p(n)}{\sum_{i=1}^\infty p(\ndp=n_0|n=i)p(n=i)}\\&=\frac{\eta(n|n_0)}{\sum_{i=1}^\infty \eta(i|n_0)}\\
    &\geq \eta(n|n_0),
\end{align*}
where we used the symmetry of $\eta$, and the observation that $\sum_{i=1}^\infty \eta(i|n_0)=P(\ndp\geq 1|n=n_0)\leq 1$. 

Let $\ep>0$ be given. Since $(\ndp-n_0|n=n_0)=O_p(a_{n_0})$, there exists $M>0$ and $N_1$ such that for all $n_0\geq N$, $P(|\ndp-n_0|\leq Ma_{n_0}|n=n_0)\geq1-\ep/2$. Since $a_{n_0}=o(n_0)$, there exists $N_2$ such that for all $n_0\geq N_2$, $P(|\ndp-n_0|\leq n_0-1|n=n_0)\geq 1-\epsilon/2$. Set $N_3=\max\{N_1,N_2\}$. Then for all $n_0\geq N_3$, we have 
\begin{align*}
P(|n-n_0|\leq M a_{n_0}|\ndp=n_0) &= \sum_{\substack{i \text{ s.t. }|i-n_0|\leq Ma_{n_0}\\\text{and }i\geq 1}} p(n=i|\ndp=n_0)\\
&\geq \sum_{\substack{i \text{ s.t. }|i-n_0|\leq Ma_{n_0}\\\text{and }i\geq 1}}\eta(i|n_0)\\
&=P(|\ndp-n_0|\leq Ma_{n_0} \text{ and } \ndp\geq 1| n=n_0)\\
&\geq  P(|\ndp-n_0|\leq Ma_{n_0} | n=n_0)+P(\ndp\geq 1|n=n_0)-1\\
&\geq  P(|\ndp-n_0|\leq Ma_{n_0} | n=n_0)\\
&\phantom{\geq}+P(|\ndp-n_0|\leq n_0-1|n=n_0)-1\\
&\geq (1-\ep/2)+(1-\ep/2)-1\\
&\geq 1-\ep.
\end{align*}
We conclude that $(n-n_0|\ndp=n_0)=O_p(a_{n_0})$. 
\end{proof}

\begin{lem}
Let $p(n)$ be a flat prior on $\{1,2,\ldots\}$. Suppose that $\ndp|n \sim \mathrm{Laplace}(0,1/\epsilon)$ where $0<\epsilon\leq \epsilon_0$. Then, $n-n_0|\ndp=n_0 = O_p(1/\epsilon)$ as $n_0\rightarrow \infty$.    
\end{lem}
\begin{proof}
    We see that the pmf of $n|\ndp=n_0$ is 
    \[p(n|\ndp=n_0) = \frac{\exp(-\epsilon |n-n_0|)}{\sum_{i=1}^\infty \exp(-\epsilon|i-n_0|)},\] 
    with support $n\in \NN$. 
    Define also $\twid n|n_0$ as the random variable with pmf 
    \[p(\twid n|n_0) = \frac{\exp(-\epsilon|\twid n-n_0|)}{\sum_{i=-\infty}^\infty \exp(-\epsilon |i-n_0|)},\]
with support $\twid n\in \ZZ$. We see that $n|\ndp=n_0 \overset d = \twid n|\{\ndp=n_0 \& \twid n\geq 1\}$, since $n$ and $\twid n$ have proportional pmfs, but with different support. Thus, we will be able to approximate probabilities about $n|\ndp=n_0$ with those about $\twid n|n_0$.

We need to get a lower bound on the series $\sum_{i=-\infty}^\infty \exp(-\ep|i-n_0|)$. Call $a=n_0-\lfloor n_0\rfloor\in [0,1)$. Then,
\begin{align*}
    \sum_{i=-\infty}^\infty \exp(-\ep|i-n_0|) &= \sum_{i=-\infty}^\infty \exp(-\ep|i-a|)\\
    &=\sum_{i=-\infty}^0\exp(-\ep(a-i))+\sum_{i=1}^\infty \exp(-\ep(i-a))\\
    &=\frac{\exp((1-a)\ep)+\exp(a\ep)}{\exp(\ep)-1}.
\end{align*}
To minimize this expression over $a\in [0,1)$, we set the derivative equal to zero, which results in the solution $a=1/2$. We also need to check the end points $a=0$ and $a=1$. At $a=1/2$, the numerator has value $2\exp(\ep/2)$ and at either $a=0$ or $a=1$, the numerator has value $\exp(\ep)+1$. Since $\exp(x)$ is a convex function, we have that $\exp(\ep/2)\leq (\exp(\ep)+1)/2$, which establishes that the minimizer is at $a=1/2$. Thus, we have 
\[\sum_{i=-\infty}^\infty \exp(-\ep|i-n_0|)\leq \frac{2\exp(\ep/2)}{\exp(\ep)-1}.\]
So, we have an upper bound on the pmf for $\twid n$:
\[p(\twid n|n_0)\leq \left(\frac{\exp(\ep)-1}{2\exp(\ep/2)}\right)\exp(-\ep|\twid n - n_0|).\]
Next, we a lower bound on the probability that $\twid n\geq 1$:
\begin{align*}
P(\twid n \geq 1|n_0)=1-P(\twid n \leq 0|n_0)&\leq \left(\frac{\exp(\ep)-1}{2\exp(\ep/2)}\right)\sum_{i=-\infty}^0 \exp(-\ep(n_0-i))\\
&=\left(\frac{\exp(\ep)-1}{2\exp(\ep/2)}\right)\left(\frac{\exp(-\ep-n_0 \ep)}{\exp(\ep)-1}\right)\\
&\leq \frac{1}{2\exp(\ep/2)}\\
&\leq 1/2,
\end{align*}
where we used the fact that $n_0\geq 1$ and $\ep>0$

We are ready to show that $n-n_0|\ndp=n_0=O_p(1/\epsilon)$, provided that $0<\ep\leq \ep_0$. Let $\gamma >0$ be given and set $M=\log(2/\gamma)+\ep_0/2$. Then, for all $n_0\geq 1$, we have that
\begin{align}
&P(|n-n_0|\ep\geq M|\ndp=n_0)\\
&= \frac{P(|\twid n-n_0|\ep\geq M \ \&\  \twid n\geq 1)}{P(\twid n\geq 1)}\\
&\leq 2P(|\twid n-n_0|\geq M/\ep|n_0)\label{eq:cond}\\
&=2\left(\frac{\exp(\ep)-1}{2\exp(\ep/2)}\right) \left[\sum_{-\infty}^{\lfloor n_0-M/\ep\rfloor}\exp(-\ep(n_0-i)) +   \sum_{i=\lceil{n_0+M/\ep}}^\infty \exp(-\ep(i-n_0))\right]\\
&=\left(\frac{\exp(\ep)-1}{\exp(\ep/2)}\right) \left[\sum_{j=-\infty}^{\lfloor a-M/\ep\rfloor} \exp(-\ep(a-j)) + \sum_{j=\lceil a+M/\ep\rceil}^\infty \exp(-\ep(j-a))\right]\label{eq:j}\\
&=\left(\frac{\exp(\ep)-1}{\exp(\ep/2)}\right) \left[\frac{\exp(\ep(-\lceil a+M/\ep\rceil+a+1))+\exp(\ep(\lceil a-M/\ep\rceil -a+1))}{\exp(\ep-1)}\right]\\
&\leq \frac{\exp(\ep[-(a+M/\ep)+a+1])+\exp(\ep[a-M/\ep-a+1])}{\exp(\ep/2)}\label{eq:round}\\
&=\frac{\exp(\ep[1-M/\ep])+\exp(\ep[1-M/\ep])}{\exp(\ep/2)}\\
&=2\exp(\ep/2)\exp(-M)\\
&\leq 2\exp(\ep_0/2)\exp(-M)\label{eq:ep0}\\
&\leq \gamma,\label{eq:plugin}
\end{align}
where line \eqref{eq:cond} upper bounds the probability of intersection with the probability of the single event and lower bounds the denominator using $P(\twid n\geq 1|n_0)\geq 1/2$, line \eqref{eq:j} re-indexes with $j=i-\lfloor n_0\rfloor$, \eqref{eq:round} uses the lower bounds $(a+M/\ep)\leq \lceil a+M/\ep\rceil$ and $(a-M/\ep) \geq \lfloor a-M/\ep\rfloor$,  \eqref{eq:ep0} uses the assumption that $\ep\leq \ep_0$, and \eqref{eq:plugin} plugs in the value $M=\log(2/\gamma)+\ep_0/2$ and simplifies the expression. We see that $n-n_0|\ndp=n_0=O_p(1/\ep)$ as $n_0\rightarrow \infty$. 
\end{proof}
    
The result below is a useful technical lemma which is similar to Slutsky's theorem for convergence in distribution. It is crucial to extend the results of Theorem \ref{thm:convergence} to the setting of convergence in total variation. 
\begin{lem}
    [\citealp{parthasarathy1985tool}]\label{lem:parthasarathy}
    Suppose that $X_n \overset {\mathrm{TV}}\rightarrow X$, where $X\in \RR^d$ is an absolutely continuous random variable, $Y_n\overset{d}\rightarrow Y$ where $Y\in \RR^d$, and $c_n \rightarrow c\neq 0$. Then, $c_n X_n+Y_n \overset {\mathrm{TV}}\rightarrow cX+Y$.
\end{lem}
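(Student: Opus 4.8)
Since this lemma is quoted from \citet{parthasarathy1985tool}, I will only sketch how I would reconstruct a proof. First, a remark on hypotheses: for the conclusion to be well posed one should read the statement as asserting that $X_n$ is independent of $Y_n$ and $X$ independent of $Y$ (or, more precisely, as a statement about the convolutions $\mathcal{L}(c_nX_n)\ast\mathcal{L}(Y_n)$ and $\mathcal{L}(cX)\ast\mathcal{L}(Y)$); without such a coupling assumption one can make $c_nX_n+Y_n$ concentrate onto an atom while $Y_n$ still converges weakly, and the claim fails. Granting this, I would bound the target distance by the triangle inequality,
\[
\mathrm{TV}\big(c_nX_n+Y_n,\,cX+Y\big)\ \le\ \mathrm{TV}\big(c_nX_n+Y_n,\,cX+Y_n\big)\ +\ \mathrm{TV}\big(cX+Y_n,\,cX+Y\big),
\]
and show both terms vanish as $n\to\infty$.

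For the first term I would use that convolving each of two laws with a fixed third law is a contraction for total variation (the data-processing inequality: $\mathrm{TV}(\mu_1\ast\lambda,\mu_2\ast\lambda)\le\mathrm{TV}(\mu_1,\mu_2)$), which reduces it to $\mathrm{TV}(c_nX_n,\,cX)$. I would then split this as $\mathrm{TV}(c_nX_n,c_nX)+\mathrm{TV}(c_nX,cX)$. For all large $n$ we have $c_n\neq 0$, so $x\mapsto c_nx$ is a linear bijection of $\RR^d$ and total variation is invariant under it, giving $\mathrm{TV}(c_nX_n,c_nX)=\mathrm{TV}(X_n,X)\to 0$ by hypothesis. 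For $\mathrm{TV}(c_nX,cX)\to 0$, with $f$ a density of $X$ the two laws have densities $|c_n|^{-d}f(\cdot/c_n)$ and $|c|^{-d}f(\cdot/c)$; I would establish their $L^1$-convergence by approximating $f$ in $L^1$ by a continuous, compactly supported $\tilde f$, noting $\||c_n|^{-d}f(\cdot/c_n)-|c_n|^{-d}\tilde f(\cdot/c_n)\|_1=\|f-\tilde f\|_1$ by change of variables and that $|c_n|^{-d}\tilde f(\cdot/c_n)\to|c|^{-d}\tilde f(\cdot/c)$ in $L^1$ by uniform continuity of $\tilde f$ together with the fact that the supports stay in a fixed compact set. (I prefer this to a direct appeal to Scheff\'e's lemma, since the rescaled densities need not converge pointwise.)

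The second term is the crux: I would show that convolution with the absolutely continuous law of $cX$ upgrades the weak convergence $Y_n\overset{d}{\to}Y$ to total-variation convergence. Letting $g$ be a density of $cX$, the laws of $cX+Y_n$ and $cX+Y$ have densities $g\ast\mu_{Y_n}$ and $g\ast\mu_Y$. Approximating $g$ in $L^1$ by a continuous, compactly supported $\tilde g$, Young's inequality gives $\|g\ast\mu-\tilde g\ast\mu\|_1\le\|g-\tilde g\|_1$ for every probability measure $\mu$, while $\|\tilde g\ast\mu_{Y_n}-\tilde g\ast\mu_Y\|_1\to 0$ by dominated convergence: for each $y$, $(\tilde g\ast\mu_{Y_n})(y)=\int\tilde g(y-t)\,d\mu_{Y_n}(t)\to\int\tilde g(y-t)\,d\mu_Y(t)$ since $t\mapsto\tilde g(y-t)$ is bounded and continuous, and these functions are uniformly bounded and supported in a common bounded region. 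Sending $\|g-\tilde g\|_1\to 0$ shows the second term vanishes, and letting $n\to\infty$ in the triangle inequality completes the argument.

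The main obstacle is precisely this smoothing step — the passage from weak to total-variation convergence after convolving with an absolutely continuous measure — which genuinely needs the $L^1$-density approximation rather than a one-line argument; everything else is bookkeeping around affine-invariance of total variation. One further point worth flagging for the way the lemma gets used in Theorem~\ref{thm:convergence}: there the scaling $c_n$ is itself random and converges to a constant only in probability. To accommodate that, I would run the first step conditionally on $c_n$ and pass to the limit by bounded convergence, using that $\gamma\mapsto\mathrm{TV}(\gamma X,cX)$ is bounded by $1$, continuous, and equal to $0$ at $\gamma=c$.
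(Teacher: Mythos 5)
The paper gives no proof of this lemma: it is imported directly from \citet{parthasarathy1985tool} and used as a black box, so there is no argument of the authors' to compare yours against. Your reconstruction follows the standard route and its outline is sound: the triangle-inequality split, contraction of total variation under convolution with a fixed law, affine invariance to dispose of $\mathrm{TV}(c_nX_n,c_nX)$, $L^1$-continuity of dilations via approximation by continuous compactly supported densities, and the smoothing step that upgrades $Y_n\overset{d}{\to}Y$ to total-variation convergence after convolving with the absolutely continuous law of $cX$. Your observation that the statement must be read as one about convolutions (equivalently, independent summands) is correct and worth making; in the paper's actual uses (Lemma~\ref{lem:minus} and Lemma~\ref{lem:tv_clt}) the second summand is either deterministic or independent noise, so this is harmless, and in Lemma~\ref{lem:minus} the scaling $m/n$ is a deterministic function of $n$, so the extra machinery in your closing caveat about random $c_n$ is not needed there.

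The one step that does not hold as written is your justification of $\lVert \tilde g\ast\mu_{Y_n}-\tilde g\ast\mu_Y\rVert_1\to 0$ by dominated convergence on the grounds that these functions are ``supported in a common bounded region.'' They are not: $\tilde g\ast\mu_{Y_n}$ is supported in $\mathrm{supp}(\tilde g)+\mathrm{supp}(\mu_{Y_n})$, and the laws $\mu_{Y_n}$ need not be compactly supported, so no integrable dominating function is available for free. Two standard repairs: (i) take $\tilde g\ge 0$, note that $\int \tilde g\ast\mu_{Y_n}=\lVert\tilde g\rVert_1=\int\tilde g\ast\mu_Y$ for every $n$, and apply Scheff\'e's lemma to the pointwise convergence you already established (pointwise convergence does hold at this step, unlike in the dilation step where you rightly avoided Scheff\'e); or (ii) use tightness of $\{\mu_{Y_n}\}$, which follows from weak convergence, to truncate to a compact set at an $L^1$ cost of $\lVert\tilde g\rVert_\infty$ times an arbitrarily small mass, and then dominate on the resulting common compact support. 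With either fix the argument is complete and correct.
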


We also include two useful methods to assert convergence in total variation central limit style quantities. Lemma \ref{lem:clt_cont} is an extension of an old result showing convergence in total variation in the one-dimensional case \citep{parthasarathy1985tool}.

\begin{lem}[\citealp{bally2016asymptotic}]\label{lem:clt_cont}
    Let $X_i$ be i.i.d. random variables in $\RR^d$ with mean zero and finite covariance matrix $C(X)$. Call $S_n = n^{-1/2} C^{-1/2}(X)\sum_{i=1}^n X_i$. Then, $\mathrm{TV}(S_n, N(0,I))\rightarrow 0$ if and only if there exists $n_0\geq 1$ such that $S_{n_0}$ has an absolutely continuous component: $S_{n_0} \overset d = \chi V+(1-\chi)W$ where $\chi\sim \mathrm{Bern}(p)$ for $p>0$ and $V$ absolutely continuous. 
\end{lem}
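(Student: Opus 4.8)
This is an equivalence, so I would prove the two implications separately; the forward (necessity) direction is elementary and the backward (sufficiency) direction carries all the analytic content. For necessity, suppose $\mathrm{TV}(S_n,N(0,I))\to 0$, so that $\mathrm{TV}(S_{n_0},N(0,I))<1$ for some $n_0$. Taking the Lebesgue decomposition $\mu_{n_0}=\mu_{n_0}^{ac}+\mu_{n_0}^{s}$ of the law of $S_{n_0}$ and letting $A$ be a Lebesgue-null set carrying $\mu_{n_0}^{s}$, the fact that $N(0,I)$ is absolutely continuous gives $\mathrm{TV}(S_{n_0},N(0,I))\ge \mu_{n_0}(A)-N(0,I)(A)=\mu_{n_0}^{s}(\RR^d)$. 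Hence $\mu_{n_0}^{s}(\RR^d)<1$, so the absolutely continuous mass $p\defeq\mu_{n_0}^{ac}(\RR^d)$ is positive, which is exactly the claimed representation $S_{n_0}\overset{d}{=}\chi V+(1-\chi)W$.

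For sufficiency, assume $S_{n_0}$ has an absolutely continuous component of mass $p>0$. Since $C(X)$ is nonsingular (implicit in normalizing to $N(0,I)$) and $S_n$ is an invertible linear image of $\sum_{i=1}^n X_i$, and total variation is invariant under such maps, it suffices to track the unnormalized partial sums. Write $\mu\defeq\mathcal L(\sum_{i=1}^{n_0}X_i)=p\,\rho+(1-p)\sigma$ with $\rho$ absolutely continuous. Expanding the $m$-fold convolution binomially, $\mu^{*m}=\sum_{k=0}^m \binom{m}{k}p^k(1-p)^{m-k}\rho^{*k}*\sigma^{*(m-k)}$, every term with $k\ge 1$ is absolutely continuous, while the purely singular term $k=0$ has mass $(1-p)^m\to 0$. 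Thus the absolutely continuous part of $S_n$ carries mass tending to $1$.

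Absolute continuity is not enough by itself; I would upgrade to convergence in total variation to the Gaussian through Fourier inversion, using that $\mathrm{TV}$ is half the $L^1$ distance of densities. Writing the characteristic function of the normalized sum as $\widehat{\mu_1}(\xi/\sqrt n)^n$ with $\mu_1=\mathcal L(X)$, the decisive role of the absolutely continuous component is via Riemann--Lebesgue: $\widehat\rho(\xi)\to 0$ as $|\xi|\to\infty$ forces $\limsup_{|\xi|\to\infty}|\widehat\mu(\xi)|\le 1-p<1$, which combined with $|\widehat\mu(\xi)|<1$ for $\xi\ne 0$ yields the uniform far-field decay of $|\widehat{\mu_1}(\xi)|^n$ needed to make the inversion integrals converge and be dominated. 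Near the origin a Taylor/Edgeworth expansion gives pointwise convergence of the rescaled characteristic function to $e^{-|\xi|^2/2}$. Splicing the near-zero and far-field estimates gives uniform, hence $L^1$, convergence of the densities to the standard Gaussian density, and thus $\mathrm{TV}(S_n,N(0,I))\to 0$; the block structure feeds the far-field bound for every $n$, and the rescaling between $n$ and a multiple of $n_0$ can be absorbed using the total-variation Slutsky result of Lemma~\ref{lem:parthasarathy}.

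The main obstacle is precisely the multivariate local limit theorem in the last step: making the characteristic-function tails integrable over all of $\RR^d$ and the Edgeworth control near the origin uniform, so that the density converges in $L^1$ rather than merely pointwise. This quantitative smoothing in $\RR^d$ is exactly what \citet{bally2016asymptotic} establish (the one-dimensional case being the classical theorem of Prokhorov), so in the write-up I would present the necessity argument and the convolution--mixture reduction in full and invoke \citet{bally2016asymptotic} for the local-CLT smoothing rather than re-deriving it.
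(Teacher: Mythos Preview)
The paper does not prove this lemma at all; it is stated as a citation to \citet{bally2016asymptotic} (and the one-dimensional antecedent is attributed to Prokhorov in the surrounding text). Your proposal therefore goes well beyond what the paper itself does. Your necessity argument via the Lebesgue decomposition is correct, and your binomial expansion showing that the singular mass of the $m$-fold convolution is at most $(1-p)^m\to 0$ is also correct and is a clean way to see why the absolutely continuous component eventually dominates. Your honest identification of the remaining obstacle --- turning ``absolutely continuous mass $\to 1$'' into ``$L^1$ convergence of densities to the Gaussian'' via a multivariate local limit theorem --- is exactly right, and invoking \citet{bally2016asymptotic} for that step is precisely what the paper does (implicitly, by citing the whole statement). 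One small wrinkle in your Fourier sketch: the Riemann--Lebesgue bound $\limsup_{|\xi|\to\infty}|\widehat\mu(\xi)|\le 1-p$ gives a uniform bound away from $1$, not integrability of $|\widehat\mu|$; the standard fix is to iterate the convolution once more so that the absolutely continuous part acquires a bounded density, which then puts its Fourier transform in $L^2$ and makes the far-field splicing go through. But since you already plan to defer this to the reference, this is not a gap in your write-up.
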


A consequence of Lemma \ref{lem:clt_cont} is that it is impossible for a sample mean of discrete random variables to converge in total variation to a Gaussian. However, recently it was shown that for integer-valued random variables, the sample mean does approach a discretized Gaussian \citep{gavalakis2024entropy}; in fact this was shown to be equivalent to adding an appropriate uniform random variable to the sum of discrete RVs, which then converges in total variation to a continuous Gaussian.

\begin{lem}[\citealp{gavalakis2024entropy}]\label{lem:clt_disc}
    Let $X_i$ be i.i.d. random variables taking values in $\ZZ$, which have mean $\mu$ and finite variance $\sigma^2$, and  let $S_n = n^{-1/2} \sum_{i=1}^n X_i$. Let $Z\sim N(\mu,\sigma^2)$. Then, $\mathrm{TV}(S_n,n^{-1/2}\lfloor n^{1/2}Z\rfloor)\rightarrow 0$ and $\mathrm{TV}(S_n+U,Z)\rightarrow 0$, where $U\sim \mathrm{Unif}(-n^{-1/2}/2,n^{-1/2}/2)$. 
\end{lem}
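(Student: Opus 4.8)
The plan is to prove the two total-variation statements by reducing both to a single \emph{local} central limit theorem for the lattice sum and then upgrading the local (pointwise) convergence to a statement in total variation. It is cleanest to run the local analysis on the unnormalized integer sum $T_n=\sum_{i=1}^n X_i$ and the matched Gaussian $Z_n\sim N(n\mu,n\sigma^2)$ that shares its mean and variance; this is the content of the cited result of \citet{gavalakis2024entropy}, and the normalized statement of the lemma is its standardization under the usual centering-and-rescaling correspondence between a lattice sum and its normalized version. I also assume the $X_i$ are aperiodic (span $1$, i.e.\ not supported on a proper coset $r+d\ZZ$ with $d\ge 2$); this is necessary, since otherwise the common lattice is $d\ZZ$ and both the floor and the discretized Gaussian must be taken relative to that sublattice. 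Some form of discretization is moreover unavoidable: by Lemma~\ref{lem:clt_cont} a sum of integer-valued variables cannot converge in total variation to a continuous Gaussian, which is precisely why the floor operation (first claim) and the uniform dither $+U$ (second claim) appear.

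First I would establish that the two claims are equivalent up to a vanishing error, so that only one must be proved from scratch. The key observations are that adding $U\sim\mathrm{Unif}(-1/2,1/2)$ is a Markov kernel and that rounding is its almost-sure left inverse on integer inputs; hence, by the data-processing inequality applied in both directions, $\mathrm{TV}(W_1+U,W_2+U)=\mathrm{TV}(W_1,W_2)$ for integer-valued $W_1,W_2$. Writing $r$ for rounding, data processing gives $\mathrm{TV}(T_n,r(Z_n))\le \mathrm{TV}(T_n+U,Z_n)$ in one direction, and the triangle inequality together with the displayed equality gives $\mathrm{TV}(T_n+U,Z_n)\le \mathrm{TV}(T_n,r(Z_n))+\mathrm{TV}(r(Z_n)+U,Z_n)$ in the other. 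The residual $\mathrm{TV}(r(Z_n)+U,Z_n)$ is a purely Gaussian quantity measuring how much dithered rounding disturbs $N(n\mu,n\sigma^2)$; since this Gaussian has standard deviation $\sigma\sqrt n\to\infty$, its density varies by a vanishing relative amount over any unit interval, and a direct estimate shows this term is $O(1/\sqrt n)\to 0$. (The distinction between $\lfloor\cdot\rfloor$ and $r(\cdot)$ is an index shift absorbed the same way.) Thus both claims follow once I show the discrete statement $\mathrm{TV}(T_n,r(Z_n))\to 0$.

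To prove $\mathrm{TV}(T_n,r(Z_n))\to 0$ I would use the classical local CLT obtained by Fourier inversion on the circle. Writing $\varphi$ for the characteristic function of $X_1-\mu$, one has $P(T_n=k)=\frac{1}{2\pi}\int_{-\pi}^{\pi}\varphi(t)^n e^{-it(k-n\mu)}\,dt$. Splitting the integral into a shrinking neighborhood of the origin, where $\varphi(t)^n\approx e^{-n\sigma^2 t^2/2}$ produces the Gaussian, and its complement, where aperiodicity forces $\sup_{\de\le|t|\le\pi}|\varphi(t)|<1$ so the contribution is exponentially small, yields the uniform local limit theorem $\sup_k\bigl|\sigma\sqrt n\,P(T_n=k)-\phi\bigl((k-n\mu)/(\sigma\sqrt n)\bigr)\bigr|\to 0$, where $\phi$ is the standard normal density, under only the finite-variance hypothesis. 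Finally I would convert this into a total-variation bound by splitting $\sum_k|P(T_n=k)-P(r(Z_n)=k)|$ into a bulk $|k-n\mu|\le M\sigma\sqrt n$ and its complement. In the bulk there are $\Theta(\sqrt n)$ lattice points, on each of which the discrepancy is $o(1/\sqrt n)$ (combining the local CLT with a midpoint-rule estimate for $P(r(Z_n)=k)=\int_{k-1/2}^{k+1/2}\phi_{n\mu,n\sigma^2}$), so the bulk sum is $o(1)$; the two tails are each bounded, uniformly in $n$, by Chebyshev's inequality as $\le \sigma^2/M^2$ (using $\var T_n=n\sigma^2$ and the analogous Gaussian bound), which is made arbitrarily small by first choosing $M$ large.

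The main obstacle is the tail-control step under only a finite second moment: the uniform local CLT alone is insufficient for total variation because the total-variation sum ranges over $\Theta(\sqrt n)$ points, so one must simultaneously secure an $o(1/\sqrt n)$ per-point discrepancy in the bulk and a genuinely uniform-in-$n$ tail bound. This is exactly the delicate point that \citet{gavalakis2024entropy} resolve; their route is in fact through relative entropy---showing the discrete entropy of $T_n$ converges to the differential entropy of the matched Gaussian via an entropy-jump/convolution argument---followed by Pinsker's inequality to obtain total variation, which is an alternative to the characteristic-function argument sketched above and sidesteps estimating the tails separately.
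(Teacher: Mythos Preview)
The paper does not prove this lemma at all: it is stated with a citation to \citet{gavalakis2024entropy} and used as a black box, so there is nothing in the paper to compare your argument against line by line. Your proposal therefore goes well beyond what the paper does.

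Your sketch is essentially a correct, self-contained route. The reduction of the two claims to one another via data processing (rounding and dithering as near-inverse Markov kernels, with the residual $\mathrm{TV}(r(Z_n)+U,Z_n)=O(n^{-1/2})$) is valid, and the Gnedenko local limit theorem plus the bulk/tail split with Chebyshev for the tails does yield $\mathrm{TV}(T_n,r(Z_n))\to 0$ under only a finite second moment. You correctly flag aperiodicity as a hidden hypothesis; the lemma as stated in the paper is not literally true without it (or without replacing $\ZZ$ by the actual span lattice), which is a small imprecision in the paper's statement that you handle appropriately. Your closing paragraph slightly oversells the difficulty of the tail step---your own Chebyshev argument already gives a uniform-in-$n$ bound of order $1/M^2$, so the ``main obstacle'' is in fact resolved by what you wrote. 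The entropy-and-Pinsker route of \citet{gavalakis2024entropy} that you mention is a genuinely different proof, yielding a quantitative relative-entropy bound rather than just $o(1)$ in total variation; your characteristic-function argument is more elementary and perfectly adequate for the qualitative statement the paper needs.
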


In the following example, we summarize a few common DP settings where one may be able to ensure that assumption A1' holds, leveraging the lemmas above.  

\begin{lem} [Central limit theorem examples for A1']\label{lem:tv_clt}
    In the following, assume that $N$ is an absolutely continuous random variable with mean zero and finite variance, and that $t(x_i)$ has finite mean and finite variance as well. Then, under any of the following settings, we have that $s$ satisfies assumption A1':
    \begin{enumerate}
        \item Statistic with an absolutely continuous component: $s=\sum_{i=1}^{n_0}t(x_i)+N$, where $t(x_i)\in \RR^d$ has an absolutely continuous component: $t(x_i) \overset d = \chi v + (1-\chi) w$, where $\chi\sim \mathrm{Bern}(p)$ for $p>0$, $v$ is absolutely continuous,  $w$ is any random variable, and $N$ is absolutely continuous.  For example, if $t(x_i) = [\twid t(x_i)]_l^u$ where $\twid t(x_i)$ is absolutely continuous, then $t(x_i)$ has an absolutely continuous component.
        \item Locally private additive mechanism with continuous noise: $s=\sum_{i=1}^{n_0}(t(x_i) + N_i)$, where $N_i$ is absolutely continuous  and $t(x_i)$ is any random variable.
        \item Integer-valued statistic with uniform noise: $s=\sum_{i=1}^{n_0} t(x_i) + U+N$, where $t(x_i)\in \ZZ$ and  $U\sim \mathrm{Unif}(-1/2,1/2)$. For example, when $N$ is a discrete Laplace random variable \citep{Inusah2006}, then $U+N$ has the truncated-uniform Laplace (tulap) distribution \citep{awan2018differentially} which satisfies $\ep$-DP. 
    \end{enumerate}
\end{lem}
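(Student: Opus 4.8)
The plan is to treat the three settings separately, using the total-variation central limit theorems (Lemmas~\ref{lem:clt_cont} and~\ref{lem:clt_disc}) as the main engine and the Slutsky-type Lemma~\ref{lem:parthasarathy} to dispose of the additive noise term. In each case I take $a=1$, $b=1/2$, and $g(\theta)=\mu\defeq\EE[t(x_i)]$ in A1, so that $n_0^{b}(n_0^{-a}s-g(\theta))$ equals $n_0^{-1/2}\sum_{i=1}^{n_0}(t(x_i)-\mu)$ plus a normalized noise term, and I write $C\defeq\cov(t(x_i))$, assumed nondegenerate.

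For setting (1), since $t(x_i)$ has an absolutely continuous component, so does $C^{-1/2}(t(x_1)-\mu)$, i.e.\ the $n=1$ case of the standardized sum in Lemma~\ref{lem:clt_cont}; that lemma therefore gives $\mathrm{TV}\big(n_0^{-1/2}C^{-1/2}\sum_{i=1}^{n_0}(t(x_i)-\mu),\,N(0,I_d)\big)\to 0$, and since total variation is invariant under the invertible linear map $C^{1/2}$ we get $n_0^{-1/2}\sum_{i=1}^{n_0}(t(x_i)-\mu)\overset{\mathrm{TV}}{\rightarrow}N(0,C)$. The residual $n_0^{-1/2}N$ converges to $0$ in probability, so Lemma~\ref{lem:parthasarathy} (with $c_n\equiv 1$, $X_n=n_0^{-1/2}\sum_i(t(x_i)-\mu)$, $Y_n=n_0^{-1/2}N$) yields total-variation convergence of the whole expression to $N(0,C)$, which is A1'. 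The stated clamping special case only needs the observation that $[\twid t(x_i)]_l^u$ is a mixture of the absolutely continuous conditional law of $\twid t(x_i)$ on the interior $(l,u)$ and boundary point masses, hence has an absolutely continuous component whenever $P(\twid t(x_i)\in(l,u))>0$.

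For setting (2), the i.i.d.\ summands $t(x_i)+N_i$ are themselves absolutely continuous (a convolution with the absolutely continuous $N_i$), so Lemma~\ref{lem:clt_cont} applies directly with the $n=1$ term witnessing the absolutely continuous component, giving $n_0^{-1/2}\sum_{i=1}^{n_0}\big(t(x_i)+N_i-\EE[t(x_i)+N_i]\big)\overset{\mathrm{TV}}{\rightarrow}N(0,\cov(t(x_1)+N_1))$; no Slutsky step is needed. For setting (3) the summands $t(x_i)$ are $\ZZ$-valued and have no absolutely continuous component, so Lemma~\ref{lem:clt_cont} is unavailable; instead Lemma~\ref{lem:clt_disc} supplies the convergence. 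The structural fact to exploit is that convolving a $\ZZ$-valued law with $U\sim\mathrm{Unif}(-1/2,1/2)$ yields a density that is constant on each unit interval centered at an integer, and this smoothing is a total-variation isometry on $\ZZ$-valued laws; combined with Lemma~\ref{lem:clt_disc} this delivers A1' for $\sum_{i=1}^{n_0}t(x_i)+U$, and the extra independent $\ZZ$-valued term $N$ is $O_p(1)$, hence a vanishing perturbation after dividing by $n_0^{1/2}$ and again removable by Lemma~\ref{lem:parthasarathy}. The discrete-Laplace example is then just the identification of $U+N$ with the tulap law.

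The step I expect to be the main obstacle is setting (3): matching the order of operations between our statistic, which adds $U$ and \emph{then} normalizes by $n_0^{1/2}$, and Lemma~\ref{lem:clt_disc}, which is stated with the normalization applied first, and verifying that the independent noise $N$ (which cannot be folded into the i.i.d.\ sum) does not break total-variation convergence. This is precisely where the width of $U$ matching the lattice spacing, together with the Slutsky-type lemma, is essential; settings (1) and (2) are by comparison routine applications of the cited results.
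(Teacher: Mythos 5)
Your proof is correct and follows essentially the same route as the paper, whose argument for each of the three settings is simply to invoke Lemma~\ref{lem:clt_cont} (parts 1 and 2, with the convolution observation for part 2) and Lemma~\ref{lem:clt_disc} (part 3), combined with Lemma~\ref{lem:parthasarathy} to absorb the additive noise. You have merely filled in the details the paper leaves implicit, including the correct identification of the delicate points (nondegeneracy of the covariance, the order of normalization versus convolution with $U$ in setting 3), so there is nothing to flag.
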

\begin{proof}
    \begin{enumerate}
        \item This result follows from Lemmas \ref{lem:clt_cont} and \ref{lem:parthasarathy}.
        \item The convolution of an absolutely continuous random variable with another random variable  (possibly discrete) is absolutely continuous \citep{millier2016density}; the result follows from Lemma \ref{lem:clt_cont} 
        \item This result follows from Lemmas \ref{lem:clt_disc} and \ref{lem:parthasarathy}.
    \end{enumerate}
\end{proof}

\subsection{Main technical results}

\begin{restatable}{lem}{lempolya}[Multivariate Polya's Theorem]\label{lem:polya}
If $X_n\overset d\rightarrow X$, and $X\in \RR^d$ is a continuous random vector, then the convergence of the multivariate cdfs is uniform. It follows that $\mathrm{KS}(p(x_n),p(x))\rightarrow 0$. 
\end{restatable}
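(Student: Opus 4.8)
The plan is to prove the multivariate analogue of P\'olya's theorem by combining the classical one-dimensional P\'olya theorem with a compactness argument, and then deduce the KS statement as an immediate corollary.

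\textbf{Setup and reduction.} Let $F_n$ and $F$ denote the cdfs of $X_n$ and $X$ respectively, so $X_n \overset{d}\rightarrow X$ means $F_n(x) \to F(x)$ at every continuity point of $F$; since $X$ is continuous, $F$ is continuous everywhere on $\RR^d$, so in fact $F_n(x) \to F(x)$ pointwise for all $x \in \RR^d$. We want to upgrade this to $\sup_{x \in \RR^d} |F_n(x) - F(x)| \to 0$. First I would handle the ``tails'': because $F$ is a continuous cdf on $\RR^d$, for any $\ep > 0$ there is a box $[-M,M]^d$ whose complement carries little mass, i.e. $F(x)$ is within $\ep$ of $0$ when some coordinate $x_j \le -M$, and $F$ is within $\ep$ of $1$ (in the appropriate monotone sense) once all coordinates exceed $M$. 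Monotonicity of each $F_n$ in each coordinate, together with pointwise convergence of $F_n$ to $F$ at the finitely many corner points of $[-M,M]^d$, lets me control $|F_n(x) - F(x)|$ uniformly outside $[-M,M]^d$ once $n$ is large, by sandwiching $F_n(x)$ between its values at nearby corners.

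\textbf{Uniformity on the compact box.} On the compact set $[-M,M]^d$ the argument is the standard Dini-type / equicontinuity move: $F$ is uniformly continuous there, so partition $[-M,M]^d$ into finitely many small sub-boxes with vertices on a grid fine enough that $F$ varies by at most $\ep$ across each sub-box. By pointwise convergence, choose $n$ large so that $|F_n(v) - F(v)| \le \ep$ at every one of the finitely many grid vertices $v$. For an arbitrary $x$ in a sub-box with ``lower'' vertex $v^-$ and ``upper'' vertex $v^+$, monotonicity of $F_n$ gives $F_n(v^-) \le F_n(x) \le F_n(v^+)$, and likewise for $F$; combining these with the vertex estimates and the oscillation bound on $F$ yields $|F_n(x) - F(x)| \le C\ep$ for a dimensional constant $C$. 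Together with the tail estimate this gives $\sup_{x} |F_n(x) - F(x)| \to 0$, which is exactly $\mathrm{KS}^\infty(p(x_n), p(x)) \to 0$.

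\textbf{From $\mathrm{KS}^\infty$ to $\mathrm{KS}$.} Finally, invoke the relation \eqref{eq:KSconvert}, namely $\mathrm{KS}(P,Q) \le 2^d \mathrm{KS}^\infty(P,Q)$, to conclude $\mathrm{KS}(p(x_n), p(x)) \to 0$ as well. (Alternatively, one can argue directly that the probability of any axis-aligned rectangle is a signed sum of $2^d$ cdf evaluations, each of which converges uniformly, giving the same $2^d$ factor.) The main obstacle is the bookkeeping in the tail/boundary region: making the monotonicity sandwich argument work uniformly for $x$ outside the box — where $x$ may have some coordinates large and others very negative — requires a little care in choosing which corner points of $[-M,M]^d$ to compare against in each ``orthant'' of the complement, but it is routine once the box is fixed. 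Everything else is the classical P\'olya argument applied coordinatewise.
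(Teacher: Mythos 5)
Your proposal is correct, and the engine of the argument is the same as the paper's: pointwise convergence of the cdfs at finitely many grid points, a monotonicity sandwich for an arbitrary point between the lower and upper corners of the cell containing it, and control of the oscillation of the limit cdf over small cells via its (uniform) continuity. The one genuine difference is how the non-compactness of $\RR^d$ is handled. You truncate to a box $[-M,M]^d$ and treat the tails separately, which is exactly the part you flag as requiring care; note that for a point with some coordinate $x_j\le -M$ the clean bound is $F_n(x)\le P(X_{n,j}\le -M)$, which is a one-dimensional \emph{marginal} cdf value rather than a joint cdf value at a corner of the box, so the tail step is most easily closed by invoking convergence of the marginal cdfs at the finitely many points $\pm M$ rather than corner evaluations of $F_n$. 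The paper avoids this entirely by first applying $\Phi$ (the standard normal cdf) coordinatewise, which transports the problem to $[0,1]^d$; it then uses nested dyadic cubes of $[0,1]^d$ and a single oscillation quantity $\gamma_k$, so there is no tail case at all. That compactification buys a cleaner write-up at the cost of one change of variables; your version is more elementary but carries the orthant-by-orthant bookkeeping. Your final passage from $\mathrm{KS}^\infty$ to $\mathrm{KS}$ via \eqref{eq:KSconvert} is exactly right, and in fact you make explicit a step the paper's proof leaves implicit.
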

\begin{proof}
    Let $\Phi$ be the cdf of $N(0,1)$, and let $Y=\Phi(X)$, and $Y_n=\Phi(X_n)$, where $\Phi$ is applied elementwise. Since $\Phi$ is continuous and invertible, we have that $X_n \overset d \rightarrow X$ if and only if $Y_n \overset d \rightarrow Y$. We also have that $F_{Y}(t) = F_{X_n}(\Phi^{-1}(t))$. We see that $F_{X_n}$ converges uniformly to $F_X$ if and only if $F_{Y_n}$ converges uniformly to $F_Y$. So, we will show the result for $Y_n$. 

    For $k\geq 1$, let $\mscr B_k^d$ be the set of regions, which decompose $[0,1]^d$ into $2^{dk}$ disjoint, equally sized cubes   with endpoint coordinate values in $\{j/2^k|j=0,1\ldots 2^k\}$ (e.g., $\mscr B_1^2 = \{[0,1/2]\times [0,1/2], [0,1/2]\times[1/2,1], [1/2,1]\times [0,1/2], [1/2,1]\times[1/2,1]\}$). Note that the $\mscr B_k^d$ are nested in the sense that for each $B_1\in \mscr B_k^d$, there exists $B_2\in \mscr B_{k+1}^d$ such that $B_1\supset B_2$. For $B\in \mscr B_k^2$, call $U^B$ the vector with $U^B_i=\max_{t\in B} t_i$ and $L^B$ the vector with $L^B_i=\min_{t\in B}t_i$. Note that $U^B$ can be viewed as the upper right corner of $B$ and $L^B$ is the lower left corner of $B$. Call
    \begin{align*}\gamma_k &= \max_{B\in \mscr B_k^d}[\max_{t\in B} F_Y(t)-\min_{t\in B}F_Y(t)]\\
    &=\max_{B\in \mscr B_k^d} [F_Y(U^B)-F_Y(L^B)],
    \end{align*}
    by the monotonicity of $F_Y$. Note that $\gamma_k$ is a decreasing sequence which converges to zero,  since $\mscr B_k^d$ becomes more dense in $[0,1]^d$ and are nested, and by the continuity of $F_Y$. 

    Let $x\in [0,1]^d$ be arbitrary, and let $B_k^x\in \mscr B_k^d$ be the cube which contains $x$. Then 
    \begin{align*}
        F_{Y_n}(x)-F_Y(x)&\leq F_{Y_n}(U^{B_k^x})-F_Y(L^{B_k^x})\\
        &=F_{Y_n}(U^{B_k^x})-F_Y(U^{B_k^x})+F_Y(U^{B_k^x})-F_Y(L^{B_k^x})\\
        &\leq F_{Y_n}(U^{B_k^x})-F_Y(U^{B_k^x})+\gamma_k.
    \end{align*}
    Similarly,
    \begin{align*}
        F_{Y_n}(x)-F_Y(x)&\geq F_{Y_n}(L^{B_k^x})-F_Y(U^{B_k^x})\\
        &=F_{Y_n}(L^{B_k^x})-F_Y(L^{B_k^x})-\gamma_k.
    \end{align*}
    Then,
    \begin{align*}
        \sup_{x\in [0,1]^d} |F_{Y_n}(x)-F_Y(x)|
        &\leq \max_{B\in \mscr B_k^d} \max_{\twid x \in \{U^B,L^B\}} |F_{Y_n}(\twid x)-F_Y(\twid x)|+\gamma_k.
    \end{align*}
    Note that the two maximums are over a finite number of points; by pointwise convergence of $F_{Y_n}$ to $F_Y$ we have that $\max_{B\in \mscr B_k^d} \max_{\twid x \in \{U^B,L^B\}} |F_{Y_n}(\twid x)-F_Y(\twid x)|\rightarrow 0$. Combining this with the fact that $\gamma_k\rightarrow 0$, we have the result. 
\end{proof}

\begin{restatable}{lem}{lemKSmargin}\label{lem:KSmargin}
Suppose that $f_n(x|y)\overset {KS}\sim g_n(x|y)$ for almost all ($\pi$) $y$, where $\pi$ is a probability distribution on $y$. Then, 
\[f_n(x|y)\pi(y) \overset {KS}\sim g_n(x|y)\pi(y).\]
The result also holds if $KS$ is replaced with total variation distance. 
\end{restatable}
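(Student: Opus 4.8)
The plan is to exploit two structural facts. First, an axis-aligned rectangle $R$ in the joint $(x,y)$-space factors as $R = R_x \times R_y$, where $R_x$ and $R_y$ are rectangles in the $x$- and $y$-spaces respectively. Second, the two joint densities $f_n(x\mid y)\pi(y)$ and $g_n(x\mid y)\pi(y)$ share the same $y$-marginal $\pi$. So I would fix an arbitrary rectangle $R = R_x\times R_y$ and bound, using Tonelli's theorem,
\begin{align*}
\left| \int_{R} f_n(x\mid y)\pi(y)\,dx\,dy - \int_{R} g_n(x\mid y)\pi(y)\,dx\,dy\right|
&\le \int_{R_y}\left|\int_{R_x}\big(f_n(x\mid y)-g_n(x\mid y)\big)\,dx\right|\pi(y)\,dy\\
&\le \int \mathrm{KS}\big(f_n(\cdot\mid y),g_n(\cdot\mid y)\big)\,\pi(y)\,dy,
\end{align*}
where the last step uses that $R_x$ is itself a rectangle, so the inner quantity is at most $\mathrm{KS}(f_n(\cdot\mid y),g_n(\cdot\mid y))$, a bound independent of both $R_x$ and $R_y$. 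Taking the supremum over all rectangles $R$ yields $\mathrm{KS}(f_n(x\mid y)\pi(y),\,g_n(x\mid y)\pi(y)) \le \EE_{Y\sim\pi}[\mathrm{KS}(f_n(\cdot\mid Y),g_n(\cdot\mid Y))]$.

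It then remains to show this expectation tends to $0$, which I would obtain from the dominated convergence theorem: by hypothesis the integrand $y\mapsto \mathrm{KS}(f_n(\cdot\mid y),g_n(\cdot\mid y))$ converges to $0$ for $\pi$-almost every $y$, and it is bounded by $1$, which is $\pi$-integrable since $\pi$ is a probability measure.

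The step I expect to need the most care — really the only subtlety — is the measurability of $y\mapsto \mathrm{KS}(f_n(\cdot\mid y),g_n(\cdot\mid y))$, needed for the expectation to be defined and for DCT to apply. I would route this through $\mathrm{KS}^\infty$ via \eqref{eq:KSconvert}: writing $\mathrm{KS}^{\infty}(f_n(\cdot\mid y),g_n(\cdot\mid y)) = \sup_{t\in\QQ^{d}}|F_{f_n(\cdot\mid y)}(t)-F_{g_n(\cdot\mid y)}(t)|$ by right-continuity of cdfs, this is a countable supremum of functions each measurable in $y$ by Tonelli; then $\mathrm{KS}\le 2^{d}\mathrm{KS}^{\infty}$ gives a measurable dominating sequence, and $\mathrm{KS}^{\infty}\le\mathrm{KS}\to 0$ pointwise lets DCT run on $2^{d}\mathrm{KS}^{\infty}$, so that $\int \mathrm{KS}(f_n(\cdot\mid y),g_n(\cdot\mid y))\pi(y)\,dy \le \int 2^{d}\mathrm{KS}^{\infty}(f_n(\cdot\mid y),g_n(\cdot\mid y))\pi(y)\,dy \to 0$.

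For the total variation statement the same argument applies verbatim, with $R_x$ replaced by an arbitrary measurable set and its $y$-sections; equivalently, by Tonelli one has the exact identity $\mathrm{TV}(f_n(x\mid y)\pi(y),g_n(x\mid y)\pi(y)) = \EE_{Y\sim\pi}[\mathrm{TV}(f_n(\cdot\mid Y),g_n(\cdot\mid Y))]$, and the same DCT argument closes it — now with no measurability worry, since $\mathrm{TV}(f_n(\cdot\mid y),g_n(\cdot\mid y)) = \tfrac12\int|f_n(x\mid y)-g_n(x\mid y)|\,dx$ is manifestly measurable in $y$.
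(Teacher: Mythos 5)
Your proof is correct and follows essentially the same route as the paper's: factor each joint rectangle as $R_x\times R_y$, bound the joint KS (resp.\ TV) distance by $\EE_{Y\sim\pi}[\mathrm{KS}(f_n(\cdot\mid Y),g_n(\cdot\mid Y))]$, and conclude by dominated convergence using the uniform bound by $1$. The only difference is that you additionally justify measurability of the integrand by passing through $\mathrm{KS}^\infty$ and a countable supremum over rational corners, a detail the paper's proof leaves implicit; this is a welcome refinement rather than a different argument.
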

\begin{proof}
\begin{align*}
    \mathrm{KS}(f_n(x|y)\pi(y), g_n(x|y) \pi(y))
    &= \sup_{S}\sup_{R} \left| \int_S\int_R f_n(x|y)\pi(y)-g_n(x|y)\pi(y) \ dxdy\right|\\
    &\leq \sup_{S} \int \pi(y) \sup_{R} \left| \int_R f_n(x|y) -g_n(x|y)\ dx\right| \ dy\\
    &=\sup_{S} \int_S \pi(y) \mathrm{KS}(f_n(x|y),g_n(x|y))  \ dy\\
    &\leq \int \mathrm{KS}(f_n(x|y),g_n(x|y))\pi(y) \ dy\\
    &\rightarrow 0,
\end{align*}
where $R$ and $S$ are rectangles, and in the last step, we used the fact that the KS distance is non-negative and bounded above by 1 and applied the dominated convergence theorem. 

In the case of total variation distance, the proof is the same, except that $R$ and $S$ are arbitrary measurable sets.
\end{proof}

\begin{restatable}{lem}{lemminus}\label{lem:minus}
Under assumption A1 and  if $m-n=O(a_n)$ where $a_n$ is defined in A2, then as $n\rightarrow \infty$,  
\begin{enumerate}
    \item $p(n^b(n^{-a}s-g(\theta))|\theta,n)\overset {KS}\sim p(m^b(m^{-a}s-g(\theta))|\theta,n)$, and 
    \item $p(s|\theta,n)\overset {KS}\sim p(s|\theta,m)$.
\end{enumerate}
If A1' holds, then convergence for both 1. and 2. holds in total variation as well.
\end{restatable}
\begin{proof}
We will view $m$ as a function of $n$. Then for part 2, we can write
\[\mathrm{KS}(p(s|\theta,n),p(s|\theta,m))=\mathrm{KS}(p(m^b(m^{-a}s-g(\theta))|\theta,n),p(m^b(m^{-a}s-g(\theta))|\theta,m)).\]
We know that  $n^b(n^{-a}s-g(\theta))|\theta,n))\overset d \rightarrow X$ under $p(s|\theta,n)$  and $m^b(m^{-a}s-g(\theta))|\theta,m))\overset d \rightarrow X$ under $p(s|\theta,m)$. Thus,  for either part 1 or 2, it suffices to show that $m^b(m^{-a}s-g(\theta))\overset d \rightarrow X$ under $p(s|\theta,n)$,  since $X$ is continuous (Lemma \ref{lem:polya}). 

First, in the case that $a=0$, we have that 
\[m^b(s-g(\theta))=(m/n)^b n^b(s-g(\theta))\rightarrow X,\]
so long as $m-n=o(n)$. 

Next, if $a\neq 0$, then 
\begin{align*}
    m^b(m^{-a}s-g(\theta))&=(m/n)^b n^b\{(m/n)^{-a} n^{-a} s-g(\theta)\}\\
    &=(m/n)^{b-a}n^b\{n^{-a} s-(m/n)^a g(\theta)\}\\
        &=(m/n)^{b-a}n^b\{n^{-a} s-g(\theta)+g(\theta)-(m/n)^a g(\theta)\}\\
            &=(m/n)^{b-a}n^b\{n^{-a} s-g(\theta)\}+ (m/n)^{b-a}n^b\{g(\theta)-(m/n)^a g(\theta)\}.
\end{align*}
At this point, we see that the first term converges in distribution to $X$ as long as $m-n=o(n)$. It remains to show that the second term converges to 0. Recall from A2 that $a_n=o(n^{1-b})$, so $m=n+o(n^{1-b})$ or equivalently, $m/n=1+o(n^{-b})$. Substituting this into the second term above gives
\begin{align*}
    (m/n)^{b-a}n^b\{g(\theta)-(m/n)^a g(\theta)\}&=\{1+o(n^{-b})\}^{b-a} n^b [1-\{1+o(n^{-b})\}^{a}]\\
    &= \{1+(b-a)o(n^{-b})\}n^b[1-\{1+(a)o(n^{-b})\}]\\
    &=\{1+o(n^{-b})\}n^b\{o(n^{-b})\}\\
    &=o(1),
\end{align*}
where we used the series expansion of $(1+x)^p=1+px+O(x^2)$ at $x=0$ to establish the second line. In conclusion, we have shown that $m^b(m^{-a}s-g(\theta))\overset d\rightarrow X$ under $p(s|\theta,n)$, which by Lemma \ref{lem:polya} implies that  both properties 1 and 2 hold.

If all instances of convergence in distribution and convergence in KS distance are replaced by convergence in total variation, then the same proof works, leveraging Lemma \ref{lem:parthasarathy}.
\end{proof}

\begin{proof}[Proof of Theorem \ref{thm:convergence}.]
The proofs of both results are very similar. For completeness, we write out both. 

1. By Assumption A2(a), given $\gamma_1>0$, there exists $M>0$ and $N_1>0$ such that for all $n\geq N_1$, 
\[P(|\ndp-n|>M a_{n} | n)<\gamma_1.\]
Call $B_{n,M,\gamma_1} = \left\{m\in \ZZ \Big|  |m-n|\leq M a_{n}\right\}$, which has probability $\geq (1-\gamma_1)$ in $p(\ndp|n)$. Note that for all $m\in B_{n,M,\gamma_1}$, we have that $m- n|n=O(a_{n})$.

Let $\gamma_2>0$. Then from Lemma~\ref{lem:minus} since $p(n^{b}(n^{-a}s-g(\theta))|\theta,n)\overset {KS}\sim p(m^{b}(m^{-a}s-g(\theta))|\theta,n)$ when $m - n|n=O(a_{n})$, there exists $N_2>0$ such that for all $n\geq N_2$ and $m\in B_{n,M,\gamma_1}$, $\mathrm{KS}(p(n^{b}(n^{-a}s-g(\theta))|\theta,n = n_0), p(m^{b}(m^{-a}s-g(\theta))|\theta,n))<\gamma_2$ (note that we use the fact that $B_{n,M,\gamma_1}$ contains a finite number of elements).

Taking $N=\max\{N_1,N_2\}$, we have that 
\begin{align*}
   & \mathrm{KS}((p(n^{b}(n^{-a}s-g(\theta))|\theta,n), p(\ndp^b(\ndp^{-a}s-g(\theta))|\theta,n))\\
&=\sup_R \left| \int_R \sum_{m\in \ZZ} p(\ndp=m|n) p(m^b(m^{-a}s-g(\theta))|\theta,n) - p(n^b(n^{-a}s-g(\theta))|\theta,n) \ ds\right|\\
&=\sup_R\left |  \int_R \sum_{m\in \ZZ} p(\ndp=m|n) [p(m^b(m^{-a}s-g(\theta))|\theta,n) - p(n^b(n^{-a}s-g(\theta))|\theta,n)] \ ds\right|\\
&\leq  \sum_{m\in \ZZ} p(\ndp=m|n) \sup_R\left| \int_R p(m^b(m^{-a}s-g(\theta))|\theta,n) - p(n^b(n^{-a}s-g(\theta))|\theta,n) \ ds\right|\\
&=\sum_{m\not\in B_{n,M,\gamma_1}} p(\ndp=m|n)\mathrm{KS}( p(m^b(m^{-a}s-g(\theta))|\theta,n),p(n^b(n^{-a}s-g(\theta))|\theta,n))\\
&\phantom{=}+\sum_{m\in B_{n,M,\gamma_1}} p(\ndp=m|n)\mathrm{KS}( p(m^b(m^{-a}s-g(\theta))|\theta,n),p(n^b(n^{-a}s-g(\theta))|\theta,n))\\
& \leq \sum_{m\not\in B_{n,M,\gamma_1}} p(\ndp=m|n)
+\sum_{m\in B_{n,M,\gamma_1}} p(\ndp=m|n)\gamma_2\\
&\leq \gamma_1+\gamma_2, 
\end{align*}
where the supremum is over all axis-aligned rectangles R.
This establishes that $\mathrm{KS}(p(n^b(n^{-a}s-g(\theta))|\theta,n), p(\ndp^b(\ndp^{-a}s-g(\theta))|\theta,n))\rightarrow 0$, as $n\rightarrow \infty$, since the choices of $\gamma_1$ and $\gamma_2$ were arbitrary. The same argument works to show that $\mathrm{KS}(p(\theta,n^b(n^{-a}s-g(\theta))|n, p(\theta,\ndp^b(\ndp^{-a}s-g(\theta))|n))\rightarrow 0$, but we will need to use the fact that $p(\theta)$ integrates to 1.

2. By Assumption A2(b), given $\gamma_1>0$, there exists $M>0$ and $N_1>0$ such that for all $n_0\geq N_1$, 
\[P(|n-n_0|>M a_{n_0} | \ndp=n_0)<\gamma_1.\]
Call $B_{n_0,M,\gamma_1} = \left\{n\Big|  |n-n_0|\leq M a_{n_0}\right\}$, which has probability $\geq (1-\gamma_1)$ in $p(n|\ndp=n_0)$. Note that for all $m\in B_{n_0,M,\gamma_1}$, we have that $m- n_0=O(a_{n_0})$.

Let $\gamma_2>0$. Then from Lemma~\ref{lem:minus} since $p(s|\theta,n = n_0)\overset {KS}\sim p(s|\theta,n = m)$ when $m - n_0=O(a_{n_0})$, there exists $N_2>0$ such that for all $n_0\geq N_2$ and $m\in B_{n_0,M,\gamma_1}$, $\mathrm{KS}(p(s|\theta,n = n_0), p(s|\theta,n = m))<\gamma_2$ (note that we use the fact that $B_{n_0,M,\gamma_1}$ contains a finite number of elements).

Taking $N=\max\{N_1,N_2\}$, we have that 
\begin{align*}
   & \mathrm{KS}((p(s|\theta,n=n_0), p(s|\theta,\ndp=n_0))\\
&=\sup_R \left| \int_R \sum_{k=1}^\infty p(n=k|\ndp=n_0) p(s|\theta,n=k) - p(s|\theta,n=n_0) \ ds\right|\\
&=\sup_R\left |  \int_R \sum_{k=1}^\infty p(n=k|\ndp=n_0) [p(s|\theta,n=k) - p(s|\theta,n=n_0)] \ ds\right|\\
&\leq  \sum_{k=1}^\infty p(n=k|\ndp=n_0) \sup_R\left| \int_R p(s|\theta,n=k) - p(s|\theta,n=n_0) \ ds\right|\\
&=\sum_{k\not\in B_{n_0,M,\gamma_1}} p(n=k|\ndp=n_0)\mathrm{KS}( p(s|\theta,n=k),p(s|\theta,n=n_0))\\
&\phantom{=}+\sum_{k\in B_{n_0,M,\gamma_1}} p(n=k|\ndp=n_0)\mathrm{KS}( p(s|\theta,n=k),p(s|\theta,n=n_0))\\
& \leq \sum_{k\not\in B_{n_0,M,\gamma_1}} p(n=k|\ndp=n_0)
+\sum_{k\in B_{n_0,M,\gamma_1}} p(n=k|\ndp = n_0)\gamma_2\\
&\leq \gamma_1+\gamma_2, 
\end{align*}
where the supremum is over all axis-aligned rectangles R.
This establishes that $\mathrm{KS}(p(s|\theta,n=n_0), p(s|\theta,\ndp=n_0))\rightarrow 0$, as $n_0\rightarrow \infty$, since the choices of $\gamma_1$ and $\gamma_2$ were arbitrary. The same argument works to show that $\mathrm{KS}(p(s,\theta|n=n_0), p(s,\theta|\ndp=n_0))\rightarrow 0$, but we will need to use the fact that $p(\theta)$ integrates to 1.

If A1' holds, then replacing all instances of KS with TV, and having $\sup_R$ over all measurable sets rather than only rectangles, we get the results in total variation distance. 
\end{proof}

\begin{cor}\label{cor:TV}
    If A1' and A2 hold, and  $g$ is any function, then, $p(g(s)|\theta,n=n_0)\overset {TV}\sim p(g(s)|\theta,\ndp=n_0)$.
\end{cor}   

\begin{proof}[Proof of Theorem \ref{thm:posteriorRectangle}.]
 For two vectors $x,y\in \RR^d$, we write $x\leq y$ to indicate that $x_i\leq y_i$ for all $i=1,\ldots, d$. We will also write $\int_{x}^y g(t)\ dt$ as shorthand for $\int_{x_1}^{y_1}\cdots \int_{x_d}^{y_d} g(t_1,\ldots,t_d) \ dt_1\cdots dt_{d}$.
 
Call $F_{n_0}(t)=P(n_0^{b'}(\theta-\theta_{n_0})\leq t|s\in R_{n_0},n=n_0)$, which we know satisfies $F_{n_0}(t)\rightarrow F_Y(t)$ as $n_0\rightarrow \infty$. 
Then, we consider the cdf of $n_0^{b'}(\theta-\theta_{n_0})| (s\in R_{n_0},\ndp=n_0)$, and aim to show that it converges to $F_Y(t)$:
\begin{align*}
    P(n_0^b(\theta-\theta_{n_0})\leq t | s\in R_{n_0},\ndp=n_0)
    &=\frac{P(n_0^{b'}(\theta-\theta_{n_0})\leq t, s\in R_{n_0}|\ndp=n_0)}{P(s\in R_{n_0}|\ndp=n_0)}\\
    &=\frac{\int_{-\infty}^t \int_{R_{n_0}} p(n_0^{b'}(\theta-\theta_{n_0})=x,s | \ndp=n_0)\ dsdx}
    {\int_{-\infty}^\infty \int_{R_{n_0}} p(n_0^{b'}(\theta-\theta_{n_0})=x,s | \ndp=n_0)\ dsdx}\\
    &\leq \frac{P(n_0^{b'}(\theta-\theta_{n_0})\leq t, s\in R_{n_0}|n=n_0)+\mathrm{KS}_{n_0}}
    {P(s\in R_{n_0}|n=n_0)-\mathrm{KS}_{n_0}}\\
    &=\frac{F_{n_0}(t) p_{n_0}+\mathrm{KS}_{n_0}}
    {p_{n_0}-\mathrm{KS}_{n_0}}\\
    &=\frac{F_{n_0}(t) +\mathrm{KS}_{n_0}/p_{n_0}}
    {1-\mathrm{KS}_{n_0}/p_{n_0}}\\
    &\rightarrow F_Y(t),
\end{align*}
since $F_{n_0}(t)\rightarrow F_Y(t)$, and $\mathrm{KS}_{n_0}/p_{n_0}\rightarrow 0$. 
\end{proof}

Nearly all mainline definitions of differential privacy imply $(0,\delta)$-DP. Some conversions are provided in Lemma \ref{lem:TVDP}. The conversions use standard techniques from the analysis of $f$-DP or inequalities relating divergences. Note that properties 1-4 are tight, whereas 5 and 6 are likely loose.

\begin{restatable}{lem}{lemTVDP}\label{lem:TVDP}
Let $M:\mscr X\rightarrow \mscr Y$ be a privacy mechanism.
\begin{enumerate}
    \item If $M$ satisfies $f$-DP  \citep{dong2021gaussian}, then it satisfies $(0,1-2c)$-DP, where $c$ is the unique fixed point of $f$: $f(c)=c$.
    \item If $M$ satisfies $\ep$-DP, then it satisfies $\left(0,\frac{\exp(\ep)-1}{\exp(\ep)+1}\right)$-DP.
    \item If $M$ satisfies $(\ep,\de)$-DP \citep{Dwork2014}, then it satisfies $\left(0,\frac{2\delta+\exp(\ep)-1}{\exp(\ep)+1}\right)$-DP.
    \item If $M$ satisfies $\mu$-GDP  \citep{dong2021gaussian}, then it satisfies $(0,2\Phi(\mu/2)-1)$-DP.
    \item If $M$ satisfies $\rho$-zCDP \citep{bun2016concentrated}, then it satisfies\\ $\left(0,\min\left\{\sqrt{\rho/2},\sqrt{1-\exp(-\rho)}\right\}\right)$-DP.
    \item If $M$ satisfies $(\alpha,\ep)$-RDP  \citep{mironov2017renyi}, then it satisfies\\ $\left(0,\min\left\{\sqrt{\rho/2},\sqrt{1-\exp(-\rho)}\right\}\right)$-DP.
\end{enumerate}
\end{restatable}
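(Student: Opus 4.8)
\textbf{Proof proposal for Lemma~\ref{lem:TVDP}.}

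The plan is to reduce every item to the single fact that $(0,\delta)$-DP — equivalently total-variation DP — is exactly the statement that $\mathrm{TV}(M(x),M(x'))\le\delta$ for all neighboring $x,x'$, and then to exploit that each listed privacy notion controls, in its own way, a binary hypothesis-testing quantity (a trade-off function, or a Rényi/moment bound) from which a total-variation bound follows. So for each item I would first identify the worst-case pair of output distributions $P=M(x)$, $Q=M(x')$ over neighbors, and then bound $\mathrm{TV}(P,Q)=1-\inf_S\{P(S)+Q(S^c)\}$ — i.e.\ one minus (twice) the optimal type-I-plus-type-II error — using the hypothesis-testing characterization of the relevant divergence.

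For item~1 ($f$-DP): by definition the trade-off function $T(P,Q)\ge f$ pointwise, and at the symmetric operating point the minimal sum of errors is $\inf_\alpha(\alpha+f(\alpha))$; since $f$ is convex, decreasing, and $f(\alpha)\le 1-\alpha$, the infimum of $\alpha+f(\alpha)$ is attained where the line of slope $-1$ is tangent, which for a symmetric trade-off function is the fixed point $c$ with $f(c)=c$, giving minimal error sum $2c$ and hence $\mathrm{TV}\le 1-2c$. Items~2 and~3 are then immediate specializations: $\ep$-DP corresponds to $f(\alpha)=\max\{0,1-e^\ep\alpha,e^{-\ep}(1-\alpha)\}$, whose fixed point is $c=1/(e^\ep+1)$, so $1-2c=(e^\ep-1)/(e^\ep+1)$; and $(\ep,\de)$-DP corresponds to $f_{\ep,\de}(\alpha)=\max\{0,1-\de-e^\ep\alpha,e^{-\ep}(1-\de-\alpha)\}$, whose fixed point computation yields $c=(1-\de)/(e^\ep+1)$ and thus $1-2c=(2\de+e^\ep-1)/(e^\ep+1)$; alternatively one can just cite the known conversion $(\ep,\de)$-DP $\Rightarrow (0,\delta')$-DP with $\delta'=\frac{2\de+e^\ep-1}{e^\ep+1}$. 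For item~4 ($\mu$-GDP), the trade-off function is $G_\mu(\alpha)=\Phi(\Phi^{-1}(1-\alpha)-\mu)$, i.e.\ the testing problem $N(0,1)$ vs.\ $N(\mu,1)$; its fixed point is at $\Phi^{-1}(1-c)-\mu=\Phi^{-1}(c)$, i.e.\ $c=\Phi(-\mu/2)$, so $\mathrm{TV}\le 1-2\Phi(-\mu/2)=2\Phi(\mu/2)-1$ — which is exactly $\mathrm{TV}(N(0,1),N(\mu,1))$, as it must be.

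Items~5 and~6 are the ones I expect to be the main obstacle, because $\rho$-zCDP and $(\alpha,\ep)$-RDP are divergence (moment-bound) conditions rather than direct testing conditions, so converting to $\mathrm{TV}$ needs an inequality chain through KL or Rényi divergence. The route: $\rho$-zCDP gives $D_\alpha(P\|Q)\le\rho\alpha$ for all $\alpha>1$; taking $\alpha\to1$ gives $\mathrm{KL}(P\|Q)\le\rho$ (and symmetrically), so Pinsker's inequality gives $\mathrm{TV}\le\sqrt{\rho/2}$; and the Bretagnolle–Huber inequality $\mathrm{TV}\le\sqrt{1-e^{-\mathrm{KL}}}$ gives $\mathrm{TV}\le\sqrt{1-e^{-\rho}}$, and we take the minimum of the two bounds. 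The stated bound for item~6 is written in terms of $\rho$, so I read it as: $(\alpha,\ep)$-RDP is first converted to a zCDP-type bound — e.g.\ the standard fact that $(\alpha,\ep)$-RDP implies $\rho$-zCDP with $\rho=\ep/(\alpha-1)$ optimized over the available order, or more simply that it implies $\mathrm{KL}(P\|Q)=D_1(P\|Q)\le\ep$ when $\alpha$ can be taken near $1$, or one uses $D_1\le D_\alpha\le\ep$ directly — and then the same Pinsker / Bretagnolle–Huber pair yields $\mathrm{TV}\le\min\{\sqrt{\rho/2},\sqrt{1-e^{-\rho}}\}$ with $\rho$ the resulting KL-type constant. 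The delicate point is making the limiting argument $\alpha\downarrow 1$ rigorous (monotonicity and lower semicontinuity of Rényi divergence in the order $\alpha$) and being careful that the bounds $D_\alpha\le\rho\alpha$ versus $D_\alpha\le\rho\alpha$ — with $\rho$ playing the role of the per-item constant — are applied with the correct constant; once the KL bound $\le\rho$ is in hand, the two tail-bound inequalities are completely routine. I would present items~1–4 via the trade-off-function fixed point (citing the hypothesis-testing interpretation of $f$-DP), and items~5–6 via KL $\Rightarrow$ (Pinsker, Bretagnolle–Huber), noting the composition/conversion facts for zCDP and RDP that are standard in the literature.
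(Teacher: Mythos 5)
Your proposal is correct and follows essentially the same route as the paper's proof: items 1--4 via the fixed point of the trade-off function (with the same fixed-point computations for $f_{\ep,0}$, $f_{\ep,\de}$, and $G_\mu$), and items 5--6 via monotonicity of R\'enyi divergences to get a KL bound, then Pinsker and Bretagnolle--Huber. Your observation that item 6 should be stated with $\ep$ in place of $\rho$ is also consistent with the paper's argument, which bounds $\mathrm{KL}\le\ep$ and applies the same two inequalities.
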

\begin{proof}
The first four properties are proved using $f$-DP. If $f$ has fixed point $c$, then $f_{0,1-2c}\leq f$  \citep{dong2021gaussian,awan2024optimizing}. For properties 2-4, we will simply calculate $c$ for the tradeoff functions $f_{\ep,0}$, $f_{\ep,\de}$ and $G_\mu$, and then apply property 1.  Properties 5 and 6 use standard inequalities relating R\'enyi divergences to KL-divergence, and then to total variation distance.
\begin{enumerate}
\item It is easy to see that if $f$ has fixed point $c$, then $f_{0,1-2c}\leq f$, since this is the tangent line of the convex function $f$ at the point $(c,c)$.
    \item The fixed point of $f_{\ep,0}$ is the solution to the following equation: $1-\exp(\ep)c=\exp(-\ep)(1-c)$, which yields $c=1/(1+\exp(\ep))$. 
    \item The fixed point of $f_{\ep,\de}$ is the solution to the following equation: $1-\delta-\exp(\ep)c=\exp(-\ep)(1-\delta-c)$, which yields $c=(1-\delta)/(1+\exp(\ep))$.
    \item The fixed point $c$ of $G_\mu(\alpha) = \Phi(\Phi^{-1}(1-\alpha)-\mu)$ satisfies $\frac{d}{d\alpha} G_\mu(\alpha)\Big|_{\alpha=c} = -1$. We calculate 
    \[\frac{d}{d\alpha} G_\mu(\alpha) =\frac{(-1) \phi(\Phi^{-1}(1-\alpha)-\mu)}{\phi(\Phi^{-1}(1-\alpha))}.\]
    We can easily verify that $1-\Phi(\mu/2)$ is the solution to $\frac{d}{d\alpha} G_\mu(\alpha) = -1$.
    \item Recall that $\rho$-zCDP is equivalent to bounding the order $\alpha$ R\'enyi Divergence by $\rho\alpha$ for all $\alpha>1$. Since R\'enyi divergences are monotonic: $D_{\alpha_1}\leq D_{\alpha_2}$ when $1\leq \alpha_1\leq \alpha_2$, and $D_1$ is the $\mathrm{KL}$-divergence, we have that the $\mathrm{KL}$-divergence is bounded above by $\rho$. Pinsker's inequality implies that $\mathrm{TV}\leq \sqrt{\rho/2}$, and Bretagnole \& Huber's inequality implies that $\mathrm{TV}\leq \sqrt{1-\exp(-\rho)}$.
    \item Recall that $(\alpha,\epsilon)$-RDP means that $D_\alpha\leq \epsilon$. Again, by the monotonicity of R\'enyi divergences, we have $\mathrm{KL}\leq \epsilon$ and apply the same inequalities as in part 5.
\end{enumerate}
\end{proof}

\begin{restatable}{lem}{lemDPminus}\label{lem:DPminus}
Let $p(s|x)$ be the distribution of a $(0,\delta)$-DP privacy mechanism, and let $p(x|\theta,n)$ be a distribution for $x$, where we assume that $x$ consists of $n$ copies of i.i.d. data. Then 
$\mathrm{TV}(p(s|\theta,n),p(s|\theta,m))\leq \delta |n-m|.$
\end{restatable}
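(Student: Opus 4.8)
The argument is short, combining three standard ingredients: the equivalence of $(0,\delta)$-DP with a total-variation bound on neighboring outputs, a group-privacy estimate obtained from the triangle inequality, and a coupling of $p(x\mid\theta,n)$ and $p(x\mid\theta,m)$ that uses the i.i.d.\ structure of $x$. First I would record that a mechanism $p(s\mid x)$ satisfies $(0,\delta)$-DP precisely when $\mathrm{TV}(p(s\mid x),p(s\mid x'))\le\delta$ for every pair $x,x'$ that are neighbors in the add/delete metric: this is immediate from Definition~\ref{def:DP} with $\ep=0$, using that the add/delete adjacency relation is symmetric, so the one-sided inequality becomes the two-sided bound $\sup_S|P(M(x)\in S)-P(M(x')\in S)|\le\delta$.

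Next, assume without loss of generality (by symmetry of $\mathrm{TV}$) that $m\ge n$, and set $k=m-n$. I would build a coupling $\pi$ of $p(x\mid\theta,n)$ and $p(x'\mid\theta,m)$ by drawing $x_1,\dots,x_m$ i.i.d.\ from the per-record distribution $p(x_i\mid\theta)$ and setting $x=(x_1,\dots,x_n)$ and $x'=(x_1,\dots,x_m)$; then marginally $x\sim p(\cdot\mid\theta,n)$ and $x'\sim p(\cdot\mid\theta,m)$, while for each realization $x'$ is obtained from $x$ by appending $k$ records. Chaining through the intermediate datasets $x=y^{(0)},y^{(1)},\dots,y^{(k)}=x'$, where $y^{(j)}$ adds one record to $y^{(j-1)}$, each consecutive pair is an add/delete neighbor, so the triangle inequality for total variation together with the previous step gives $\mathrm{TV}(p(s\mid x),p(s\mid x'))\le k\delta$ for every pair in the support of $\pi$.

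Finally, I would marginalize over the coupling: since $p(s\mid\theta,n)=\int p(s\mid x)\,d\pi(x,x')$ and $p(s\mid\theta,m)=\int p(s\mid x')\,d\pi(x,x')$, joint convexity of total-variation distance yields
\[
\mathrm{TV}(p(s\mid\theta,n),p(s\mid\theta,m))\le\int\mathrm{TV}(p(s\mid x),p(s\mid x'))\,d\pi(x,x')\le k\delta=|n-m|\delta .
\]
The only places needing a word of care are the joint-convexity bound for mixtures indexed by an arbitrary coupling (standard, but worth stating explicitly) and the observation that the nested construction really does produce a chain of add/delete neighbors, which is automatic. I do not expect any genuine obstacle here; the substance of the lemma is entirely in the group-privacy blow-up by the factor $|n-m|$, which the coupling localizes cleanly.
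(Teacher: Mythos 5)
Your proposal is correct and follows essentially the same route as the paper's proof: the same nested coupling $x'=(x,z)$ with $z$ consisting of the $|n-m|$ extra i.i.d.\ records, the same convexity step pulling the total-variation distance inside the integral over the coupling, and the same group-privacy bound (which the paper cites from the literature and you rederive via the triangle inequality over a chain of add/delete neighbors). No gaps.
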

\begin{proof}
To clarify the notation, we use $f(x|\theta,n)$ to represent the density $p(x|\theta, n)$. Then, 
\begin{align*}
\mathrm{TV}(p(s|\theta,n),p(s|\theta,m))
&=\frac 12 \int \Big|p(s|\theta,n)-p(s|\theta,m)\Big|\ ds\\
&=\frac 12 \int \Big|\int p(s|x){f}(x|\theta,n)\ dx-\int p(s|y) f(y|\theta,n)\ dy\Big|\ ds
\end{align*}
Without loss of generality, say that $m>n$. Then we can couple $x$ and $y$ such that $x\subset y$ and $y=(x,z)$. Then,
\begin{align*}
    &\mathrm{TV}(p(s|\theta,n),p(s|\theta,m))\\
    &=\frac 12 \int \Big| \int\int p(s|x) f(x|\theta,n)f(z|\theta,m-n)-p(s|(x,z))f(x|\theta,n)f(z|\theta,m-n) \ dxdz\Big| ds\\
    &\leq \int\int\frac 12 \int \Big| p(s|x)-p(s|(x,z))\Big| \ ds f(x|\theta,n)f(z|\theta,m-n)\ dxdz\\
    &\leq \int\int \delta|n-m| f(x|\theta,n)f(z|\theta,m-n) \ dxdz\\
    &=\delta|n-m|,
\end{align*}
where we used the fact that if $M$ satisfies $(0,\delta)$-DP for groups of size 1, then $M$ satisfies $(0,\min\{1,k\delta\})$-DP for groups of size $k$ \citep[Section 4.3]{awan2022log}.
\end{proof}

\begin{proof}[Proof of Theorem \ref{thm:TVprivacy}.]
We will prove the result for $\mathrm{TV}(p(s,\theta|n=n_0),p(s,\theta|\ndp=n_0))$. The other result has a nearly identical proof, except that we do not need to integrate over $p(\theta)$. 

\begin{align*}
    &\mathrm{TV}(p(s,\theta|n=n_0),p(s,\theta|\ndp=n_0))\\
    &=\frac 12\int\int \Big| p(s|\theta,n_0)p(\theta)-\sum_{k=1}^\infty p(s|\theta,k) p(\theta)p(n=k|\ndp=n_0) \Big| \ dsd\theta\\
    &=\frac 12\int\int \Big| \sum_{k=1}^\infty (p(s|\theta,n_0)-p(s|\theta,k))p(n=k|\ndp=n_0)\Big| \ ds p(\theta)\ d\theta\\
    &\leq \int \sum_{k=1}^\infty \frac 12\int \Big| p(s|\theta,n_0)-p(s|\theta,k)\Big|\ ds \ p(n=k|\ndp=n_0) p(\theta)\ d\theta\\
    &\leq \int \sum_{k=1}^\infty \delta |k-n_0| p(n=k,\ndp=n_0) p(\theta) \ d\theta\\
    &= \delta  \sum_{k=1}^\infty |k-n_0| p(n=k|\ndp=n_0)\\
    &= \delta  \EE_{n|\ndp=n_0} |n-n_0|.
\end{align*}
\end{proof}

\begin{restatable}{lem}{lemExpectation}\label{lem:Expectation}
Suppose that $p(n)$ is a flat improper prior on $\{1,2,\ldots\}$ and that $n_0\in \NN$.
\begin{enumerate}
    \item If $p(n|\ndp=n_0)\propto \exp(-\ep|n-n_0|)$, then 
$\EE_{n|\ndp=n_0} |n-n_0| \leq \frac{2}{\exp(\ep)-1}.$
\item If $p(n|\ndp=n_0)\propto \exp(-(\ep^2/2) (n-n_0)^2)$, then 
$\EE_{n|\ndp=n_0} |n-n_0| \leq 2/\ep.$

Note that both quantities go to zero when $\ep\rightarrow \infty$.
\end{enumerate}
\end{restatable}
\begin{proof}
1. First, we suppose that $p(n|\ndp=n_0)\propto \exp(-\ep|n-n_0|)$, and calculate,
\begin{align*}
    \EE_{n|\ndp=n_0}&=\frac{\sum_{k=1}^\infty |k-n_0| \exp(-\ep|k-n_0|)}{\sum_{k=1}^\infty \exp(-\ep|k-n_0|)}.
\end{align*}
We can lower bound the denominator as follows:
\[\sum_{k=1}^\infty \exp(-\ep|k-n_0|)\geq \sum_{j=0}^\infty \exp(-\ep j)=\frac{\exp(\ep)}{\exp(\ep)-1}.\]
Then,
\begin{align}
    \EE_{n|\ndp=n_0}&\leq \sum_{k=1}^\infty |k-n_0| \exp(-\ep|k-n_0|) \left(\frac{\exp(\ep)-1}{\exp(\ep)}\right)\\
    &\leq \sum_{k=-\infty}^\infty|k-n_0| \exp(-\ep|k-n_0|) \left(\frac{\exp(\ep)-1}{\exp(\ep)}\right)\\
    &=\sum_{j=-\infty}^\infty |j| \exp(-\ep j) \left(\frac{\exp(\ep)-1}{\exp(\ep)}\right)\\
    &=(1+\exp(-\ep)) \EE_{X\sim \mathrm{DLap}(\exp(-\ep))} |X|\\
    &=(1+\exp(-\ep))\left(\frac{2\exp(-\ep)}{(1+\exp(-\ep))(1-\exp(-\ep))}\right)\label{eq:dlap}\\
    &=\frac{2}{\exp(\ep)-1},
\end{align}
where in \eqref{eq:dlap}, we use the formula for the first absolute moment of the discrete Laplace distribution \citep[Proposition 2.2]{Inusah2006}.\\
2. Next, we suppose that $p(n|\ndp=n_0)\propto \exp(-(\ep^2/2) (n-n_0)^2)$. First we will call 
\[A = \sum_{k=0}^\infty \exp(-\ep^2k^2/2),\]
and note that 
\[\sum_{k=1}^\infty \exp(-(\ep^2/2)(k-n_0)^2)\geq A,\]
and 
\[\sum_{k=-\infty}^\infty \exp(-(\ep^2/2) (k-n_0)^2))=2A-1\leq 2A.\]
Then, 
\begin{align}
    \EE_{n|\ndp=n_0} |n-n_0| &=\frac{\sum_{k=1}^\infty |k-n_0| \exp(-(\ep^2/2) (k-n_0)^2)}{\sum_{k=1}^\infty \exp(-(\ep^2/2)(k-n_0)^2)}\\
    &\leq \frac{ \sum_{k=-\infty}^\infty |k-n_0| \exp(-(\ep^2/2)(k-n_0)^2)(2A)}{A \sum_{k=-\infty}^\infty \exp(-(\ep^2/2)(k-n_0)^2)}\label{eq:moment1}\\
    &=\frac{2\sum_{k=-\infty}^\infty |k| \exp(-\ep^2 k^2/2)}{\sum_{k=-\infty}^\infty \exp(-\ep^2k^2/2)}\label{eq:moment2}\\
    &=2\EE_{X\sim N_{\ZZ}(0,1/\ep^2)} |X|\\
    &\leq 2\sqrt{\EE_{X\sim N_{\ZZ}(0,1/\ep^2)} X^2}\label{eq:moment3}\\
    &=2\sqrt{\var_{X\sim N_{\ZZ}(0,1/\ep^2)} (X)}\label{eq:moment4}\\
    &\leq 2\sqrt{1/\ep^2},\label{eq:moment5}\\
    &=2/\ep,
\end{align}
where in \eqref{eq:moment1}, we multiplied top and bottom by $\sum_{k=-\infty}^\infty \exp(-(\ep^2/2)(k-n_0)^2)$ and applied the inequalities we established earlier, in terms of $A$; in \eqref{eq:moment2}, we use the fact that $n_0\in \ZZ$ and reparametrized the sums; in \eqref{eq:moment3} we use Jensen's inequality; in \eqref{eq:moment4} we express the quantity in terms of the variance of the discrete Gaussian distribution \citep{canonne2020discrete}; for \eqref{eq:moment5} we apply \citep[Corollary 9]{canonne2020discrete}.
\end{proof}

\begin{proof}[Proof of Theorem \ref{thm:ABCprivacy}.]
Let $A\subset \Theta$ be a measurable set. Then 
\begin{align*}
    P(\theta\in A|s\in S,\ndp(\gamma)=n_0)&=\frac{P(\theta\in A,s\in S|\ndp(\gamma)=n_0)}{P(s\in S|\ndp(\gamma)=n_0)}\\
    &\leq \frac{P(\theta\in A,s\in S|n=n_0)+\delta\gamma}{P(s\in S|n=n_0)-\delta\gamma}\\
    &\rightarrow P(\theta\in A| s\in S,n=n_0),
\end{align*}
where $\delta\gamma$ is the upper bound on $\mathrm{TV}(p(\theta,s|\ndp(\gamma)=n_0),p(\theta,s|n=n_0))$, derived in Theorem \ref{thm:TVprivacy}, and the convergence is as $\gamma\rightarrow 0$. 
\end{proof}

\begin{proof}[Proof of Theorem \ref{thm:mleconsistency}.]
Our proof resembles that of the argmax continuous mapping theorem in empirical processes~\citep{sen2022gental}.
According to the Portmanteau theorem [See \citet[Theorem 1.3.4 ]{van1996weak} or \citet[Theorem 10.5]{sen2022gental}], it suffices to show that, for every closed set $F \subset \Theta$, we have 
\begin{equation}\label{eq:portmanteau}
    \limsup_{\epsilon \to \infty} \mathbb{P}_{\epsilon}\left(\hat{\theta}_{\epsilon} \in F\right) \le \mathbb{P}(\hat{\theta}_0 \in F).
\end{equation}
Let $K$ be a compact subset of $\Theta$. 
Since $L(\theta)$ is upper semi-continuous and has a unique maximizer, we must have 
$$L(\hat{\theta}(n_0,s);s) > \sup_{h \in K \cap G^{c}} L(\theta;s)$$
for every open set $G$ containing $\hat{\theta}(n_0, s)$

We first observe that 
$\{\hat{\theta}_{\epsilon} \in F\} = \{\hat{\theta}_{\epsilon} \in F \cap K\} \cup \{\hat{\theta}_{\epsilon} \in F \cap K^{c}\}$ and the first set satisfies
$$\{\hat{\theta}_{\epsilon} \in F \cap K\} \subseteq \{\sup_{\theta \in F \cap K} L_{\epsilon}(\theta) \geq \sup_{\theta \in K} L_{\epsilon}(\theta) + o_{P}(1) \}.
$$
By the uniform convergence assumption and the continuous mapping theorem, we have 
$$ \sup_{\theta \in F\cap K}L_{\epsilon}(\theta) - \sup_{\theta \in K} L_{\epsilon}(\theta) + o_{P}(1) \overset{d.}{\to} \sup_{\theta \in F\cap K}L(\theta) - \sup_{\theta \in K} L(\theta).
$$
Therefore we have
$$ \lim\sup_{\epsilon \to \infty} \mathbb{P}_{\epsilon}\left(\hat{\theta}_{\epsilon} \in F \cap K\right) \le \mathbb{P}_{\epsilon}\left( \sup_{\theta \in F \cap K} L(\theta) \geq \sup_{\theta \in K} L(\theta) \right).
$$
We can divide the event on the right side by intersecting it with the partition $\{\hat{\theta}_0 \in K\}$ and $\{\hat{\theta}_0 \not\in K\}$. 
Notice that 
$\{ \sup_{\theta \in F \cap K} f_{n_0}(\theta) \geq \sup_{\theta \in K} f_{n_0}(\theta) \} \cap \{\hat{\theta}_0 \in K\} \subseteq \{\hat{\theta}_0 \in F\}.$
So we have 
$$ \lim\sup_{\epsilon \to \infty} \mathbb{P}_{\epsilon}\left(\hat{\theta}_{\epsilon} \in F \cap K\right) \le \mathbb{P}(\hat{\theta}_0 \in F) + \mathbb{P}(\hat{\theta}_0 \not\in K).
$$
So far we have shown that 
$$ \limsup_{\epsilon \to \infty} \mathbb{P}\left(\hat{\theta}_{\epsilon} \in F\right) \leq \mathbb{P}(\hat{\theta}_0 \in F) + \mathbb{P}(\hat{\theta}_0 \not\in K) + \lim\sup_{\epsilon \to \infty} \mathbb{P}_{\epsilon}(\hat{\theta}_{\epsilon} \not\in K).$$
The tightness condition implies that the last two terms can be made arbitrarily small, and therefore establishes \eqref{eq:portmanteau}.
\end{proof}

\begin{example}
   [Satisfying the Assumptions of Theorem \ref{thm:mleconsistency}]
   In this example, we give some concrete conditions that imply the assumptions of Theorem \ref{thm:mleconsistency}. Let $\Theta$ be a compact subset of $\RR^p$ (making bullet 3 trivial). Let $T(x)|\theta,n$ be a non-private summary that is a location family in $\theta$ and is unimodal for all $\theta$ (for example a location Gaussian distribution). Let $L$ be a continuous, log-concave noise such as Laplace or Gaussian. Then $s=T(x)+L|\theta,n$ has a continuous density, which is unimodal, and is a location-family in $\theta$. This implies that $L(\theta;s,n)$ is continuous in $\theta$ and $s$ and has a unique maximum, satisfying bullet 1. 

   Let $p_\epsilon(\ndp|n)$ be either continuous Laplace or Gaussian noise with scale $1/\epsilon$, which is the privacy mechanism for $\ndp$. Let $p(n)$ be the uniform distribution on $\{1,\ldots, N\}$ for some fixed and finite $N$. Then $p(n|\ndp)$ is a proper ``posterior'' distribution for $n|\ndp$. Then, $L_\epsilon(\theta;s,\ndp)$ is continuous in $\theta$, $s$, and $\ndp$. As $\epsilon\rightarrow \infty$, we have that $\ndp\rightarrow n_0$. Furthermore, $p_\epsilon(n=k|\ndp={\ndp}_0)\rightarrow I(k={\ndp}_0)$ as $\epsilon\rightarrow \infty$. Applying Slutsky's Theorem and the Continuous Mapping Theorem, we have that $L_\epsilon(\theta;s,\ndp)\overset d \rightarrow L(\theta;s)$ as $\epsilon\rightarrow \infty$, over the randomness in $x$, $s$, and $\ndp$. This satisfies bullet 2. Thus, all of the conditions of Theorem \ref{thm:mleconsistency} have been satisfied. 
\end{example}

\begin{proof}[Proof of Proposition \ref{prop:acceptance}.]
\begin{enumerate}
    \item If a mechanism satisfies $\ep_s$-DP in the unbounded notion, then it satisfies $2\ep_s$-DP in bounded DP. This holds by the group privacy bound for groups of size 2, since swapping a row is equivalent to adding a new row and then deleting the old row. The lower bound follows from the definition of bounded DP.
    \item We see that when $p(n)$ is a flat prior, these terms cancel out, and when $n\geq 2$, the transition probabilities for $n$ and $n^*$ also cancel. The bound on the ratio of the privacy mechanisms follows from the differential privacy guarantees. In the case that $n=1$ and $n^*=2$, we have an additional factor of $1/2$, from the ratio $q(1|2)/q(2|1)=1/2$.
    \end{enumerate}
\end{proof}


\begin{proof}[Proof of Proposition \ref{prop:ergodic}.]
Call $\pi$ the distribution of $p(\theta,x_{1:n},n|s,\ndp)$. It suffices to show that the chain is aperiodic, is $\pi$-irreducible, and satisfies detailed balance for $\pi$. For example, see \citet{roberts2004general}.

    It follows from property 3 that the support of $p(\theta|x_{1:n})\propto p(\theta)p(x_{1:n}|\theta)$ agrees with $p(\theta)$, provided that $p(x_{1:n}|\theta)>0$. This implies that from almost every $x_{1:n}$ (with respect to $p(x_{1:n}|\theta)$), $p(\theta|x_{1:n})$ can propose almost every $\theta$ (with respect to  $p(\theta)$), which is accepted with probability 1. 

Note that the chain is aperiodic, since for every set $A\subset \Theta$ with $P(\theta \in A)>0$, $p(\theta|x)$ can propose $\theta\in A$ with positive probability, and the other moves in the chain always have a possibility of remaining at the current state. From properties 2 and 3, it follows from \citet{green1995reversible} that our construction satisfies detailed balance.

To see that the chain is $\pi$-irreducible, we need to show that the chain can move from any starting state to a set $A$ in a finite number of steps with positive probability, for any $A$ such that $\pi(A)>0$. By the note above, we have that $p(\theta|x)>0$ for almost all $\theta$. Furthermore, we have that $p(x_i|\theta)>0$ for almost all $x_i$. By property 3, we can transition from $n$ to $n^*$ in $|n-n^*|$ cycles. By properties 2, 3, and 4, we have that all of the acceptance probabilities are positive with probability 1 (with respect to $\pi$). Thus, we see that we can move to $A$ with positive probability in a finite number of steps. 
\end{proof}

Proposition \ref{prop:geometric} relies on \citet[Theorem 1]{qin2025geometric}, which involves some technical language. In order to state a version of \citet[Theorem 1]{qin2025geometric} convenient to our setting, we will need to introduce some additional notation.

Consider a trans-dimensional Markov chain $X(t)=(K(t), Z(t))_{t=0}^\infty$ on $\mathsf{ X}=\bigcup_{k\in \mscr K} \{k\}\times \mathsf{Z}_k$, where $(\mathsf{Z}_k,\mscr A_k)$ is a non-empty measurable space with a probability measure $\Psi_k$ and countably generated $\sigma$-algebra $\mscr A_k$, and $\mscr K$ is a finite set. The space $\mscr X$ is equipped with the $\sigma$-algebra generated by $\{k\}\times A$ for $k\in \mscr K$ and $A\in \mscr A_k$. The target distribution is $\pi$, where $\pi(\{k\}\times A) = \frac{\Psi_k(A)}{\sum_{k'\in \mscr K}\Psi_{k'}(\mathsf{Z}_{k'})}$. Note that the marginal probability of $k$ under $\pi$ is $\frac{\Psi_k(\mathsf{Z}_k)}{\sum_{k'\in \mscr K}\Psi_{k'}(\mathsf{Z}_{k'})}$ and the conditional probability of $Z\in \mathsf{Z}_k$, given $k$, is $\Phi_k(Z)=\frac{\Psi_k(Z)}{\Psi_k(\mathsf{Z}_k)}$.

\begin{lem}[\citealp{qin2025geometric}]\label{lem:qin}
   Let $X(t)$ be a trans-dimensional Markov chain satisfying the properties stated above. Assume for $k\in \mscr K$, there exists a Markov transition kernel $P_k:\mathsf{Z}_k\times \mscr A_k\rightarrow [0,1]$ such that 
   \begin{enumerate}
       \item for any $k\in \mscr K$, if $X(t)=(k,Z)$ for some $Z\in \mathsf{Z}_k$, then the chain stays at $k$ with probability $\geq c_k>0$ and makes within model moves according to $P_k$, which satisfies $\Phi_k P_k=\Phi_k$,
       \item the chain associated with $P_k$ is $\Phi_k$-irreducible and $\Phi_k$-a.e. geometrically ergodic and reversibly with respect to $\Phi_k$,
       \item there exists $s\in \ZZ^+$ such that for all $k,k'\in\mscr K$,
       \[\int_{\mathsf{Z}_k} \Phi_k(dz) P^s\left((k,z);( \{k'\}\times \mathsf{Z}_{k'})\right)>0,\]
       i.e., if $Z\sim \Psi_k$, then the chain starting from $(k,z)$ can reach $\{k'\}\times Z_{k'}$ after $s$ iterations with positive probability,
   \end{enumerate}
   then $X(t)$ is $\pi$-a.e. geometrically ergodic. 
\end{lem}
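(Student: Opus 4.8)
The plan is to route the argument through the $L^2(\pi)$ spectral theory of reversible Markov chains. In the reversible-jump setting where this lemma is meant to be applied, the full kernel $P$ is self-adjoint on $L^2(\pi)$: the within-model kernels $P_k$ are reversible with respect to $\Phi_k$ by hypothesis~(2), and the between-model moves are built to satisfy detailed balance (as in Green's reversible-jump construction), so the composite step preserves $\pi$ and is reversible. For a $\pi$-irreducible, aperiodic, reversible chain, $\pi$-a.e.\ geometric ergodicity is equivalent to $\lVert P\rVert_{L^2_0(\pi)}<1$ on the mean-zero subspace $L^2_0(\pi)=\{f\in L^2(\pi):\int f\,d\pi=0\}$ (the Roberts--Rosenthal characterization), i.e.\ to a two-sided spectral gap, $\sup\mathrm{Spec}(P|_{L^2_0})<1$ and $\inf\mathrm{Spec}(P|_{L^2_0})>-1$. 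I would first dispatch the side conditions: $\Phi_k$-irreducibility of each $P_k$ (hypothesis~2), the $s$-step communication among all pairs of models (hypothesis~3), and the strictly positive holding probabilities $c_k$ (hypothesis~1) jointly give $\pi$-irreducibility and aperiodicity of $X(t)$.

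The engine for the upper gap is a Markov-chain decomposition theorem of Madras--Randall type, tailored to exactly this situation: a reversible chain whose state space carries a finite partition. Here the partition is by model index, $\mathsf{X}=\bigcup_{k\in\mscr K}\{k\}\times\mathsf{Z}_k$. The restriction (within-block) chains are precisely the $P_k$, each of which has a positive relaxation gap because geometric ergodicity of a reversible chain forces a positive $L^2(\Phi_k)$ spectral gap (hypothesis~2); since $\mscr K$ is finite, these admit a uniform lower bound $\min_k\mathrm{Gap}(P_k)>0$. The projection chain lives on the finite index set $\mscr K$, with transition weights aggregated from the between-model moves; hypothesis~(3) makes this finite chain irreducible, hence it has a strictly positive gap. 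Feeding these two gaps into the decomposition bound yields $\mathrm{Gap}(P)>0$, that is $\sup\mathrm{Spec}(P|_{L^2_0})<1$.

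The step I expect to be the main obstacle is twofold. First, matching our dynamics to the decomposition theorem's hypotheses: its restriction chains are the original chain conditioned to stay in a block, and I must use the holding-and-$P_k$ structure of hypothesis~(1) to identify these with the given $P_k$ up to the harmless factor $c_k$, while checking that the between-model transitions assemble into a bona fide reversible projection chain on $\mscr K$ (the $s$-step nature of hypothesis~3 affects only irreducibility of that projection chain, not the conclusion). Second, and more delicate for \emph{geometric} ergodicity, the decomposition theorem controls only the top of the spectrum, so I must separately keep the spectrum off $-1$. Here the positive holding probabilities $c_k$ are essential: they render the induced projection chain aperiodic, so its finite-state spectrum avoids $-1$, while each $P_k$, being geometrically ergodic, already has a two-sided gap and so avoids $-1$ within blocks; combining these keeps $\inf\mathrm{Spec}(P|_{L^2_0})>-1$. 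Together with the top gap this gives $\lVert P\rVert_{L^2_0(\pi)}<1$, and the Roberts--Rosenthal equivalence then delivers $\pi$-a.e.\ geometric ergodicity of $X(t)$, which is the content of \citet{qin2023geometric}.

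If one does not wish to assume reversibility of the full chain, the fallback is Meyn--Tweedie drift and minorization: assemble a global Lyapunov function $V(k,z)=w_kV_k(z)$ from the within-model drift functions $V_k$ furnished by geometric ergodicity of each $P_k$, tuning the finitely many weights $w_k$ so that the bounded family of between-model jumps cannot overcome the within-model drift, with small set $\bigcup_k\{k\}\times C_k$. I regard the spectral route as cleaner here precisely because it avoids controlling $V_{k'}$ after a between-model jump, which is the fragile point of the drift approach.
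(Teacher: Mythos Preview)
The paper does not give an independent proof; its entire argument is the sentence ``The result follows from Theorem 1, Lemma 2 and Lemma 3 of \citet{qin2023geometric} along with some simplifying remarks in Section 1.1.'' So there is no paper-side argument to compare against beyond the pointer to Qin.

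Your spectral route rests on the full kernel $P$ being self-adjoint on $L^2(\pi)$, and your justification for this is the weak point. You write that ``the within-model kernels $P_k$ are reversible \ldots\ and the between-model moves are built to satisfy detailed balance \ldots\ so the composite step preserves $\pi$ and is reversible.'' But the lemma does not assume reversibility of $P$, and a systematic-scan composition of two $\pi$-reversible kernels is generically not $\pi$-reversible. In particular, Algorithm~\ref{alg:RJMCMC}---to which this lemma is applied in Proposition~\ref{prop:geometric}---runs the within-model update and then the between-model update in sequence; the resulting one-cycle (or two-cycle) kernel is not self-adjoint. Without reversibility of $P$, the Roberts--Rosenthal equivalence (geometric ergodicity $\Leftrightarrow$ $L^2_0$-spectral gap) is unavailable, and Madras--Randall-type decomposition bounds, which control the top of the spectrum of a self-adjoint operator, have no operator to be about. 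Your lower-edge argument (``holding probabilities keep the spectrum off $-1$'') inherits the same problem.

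Your drift-condition fallback is the direction that actually covers the lemma as stated, but you dismiss it after one sentence. The point you flag as fragile---controlling $V_{k'}$ immediately after a between-model jump---is precisely the content that must be supplied, and the lemma's hypotheses (finite $\mscr K$, uniform $s$-step communication in (3), uniform holding $c_k>0$ in (1)) are there to make that step go through. As written, the proposal establishes at best a version of the lemma under an extra reversibility hypothesis that the intended application does not satisfy.
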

\begin{proof}
    The result follows from Theorem 1, Lemma 2 and Lemma 3 of \citet{qin2025geometric} along with some simplifying remarks in Section 1.1.
\end{proof}
    
\begin{proof}[Proof of Proposition \ref{prop:geometric}.]
For this proof, we will consider our Markov transition kernel to consist of \emph{two} full cycles of Algorithm \ref{alg:RJMCMC}. This is more convenient to analyze, because it ensures a positive probability at remaining at the same value of $n$ from one cycle to the next. 

The proof consists of checking that the conditions of \citet[Theorem 1]{qin2025geometric} are satisfied, and using \citet[Theorem 3.4]{ju2022data} to establish that the within-model moves are geometrically ergodic. 

    \begin{itemize}
      \setlength\itemsep{0em}
        \item Property 1 of Lemma \ref{lem:qin} holds, since Algorithm \ref{alg:RJMCMC} alternates between within-model and between-model moves and there is a lower bound on the probability of remaining at the same value of $n$, by Proposition \ref{prop:acceptance}.
        \item The assumptions ensure that within-model moves are geometrically ergodic \citep[Theorem 3.4]{ju2022data}, and \citet[Theorem 3.3]{ju2022data} asserts that the other conditions in property 2 of Lemma \ref{lem:qin} hold as well.
        \item Property 3 of Lemma \ref{lem:qin} follows since $p(n)$ has finite support and Algorithm \ref{alg:RJMCMC} is irreducible by Proposition \ref{prop:ergodic}.
    \end{itemize}
\end{proof}

\section{Multinomial data with unknown \texorpdfstring{$n$}{n}  }\label{s:multi}
 One of the oldest and most common data structures containing sensitive data are contingency tables \citep{Hundepool2012,kim2025differentially}, which are commonly modeled as multinomials. To achieve DP in this setting, it is common to add independent noise to the counts in the table  \citep{karwa2015private,abowd2019census,kim2025differentially}. However, in the unbounded DP framework, we do not know the sample size $n$ which is one of the parameters in the multinomial. We can use a clever relation between the multinomial and Poisson distributions to write an equivalent model that is much easier to perform inference on when $n$ is unknown. 

We begin with two facts:
\begin{itemize}
  \setlength\itemsep{0em}
    \item Let $x_i \sim \mathrm{Pois}(\lambda_i)$ be independent for $i=1,\ldots, k$. Note that $\sum_{i=1}^k x_i \sim \mathrm{Pois}(\sum_{i=1}^k \lambda_i)$. Then, $\{x_i\} | \sum_{i=1}^k x_i=n \sim \mathrm{Multi}(N,\{\frac{\lambda_i}{\sum_{j=1}^k \lambda_j}\})$. 
    \item Let $\{y_i\}_{i=1}^k|z=n \sim \mathrm{Multi}(n,\{p_i\}_{i=1}^k)$ (that is $z=\sum_{j=1}^k y_j$), and suppose that $z\sim \mathrm{Pois}(\lambda)$. Then  $y_i \sim \mathrm{Pois}(\lambda p_i)$ and are independent. 
\end{itemize}

The two observations above indicate that if we are okay with viewing $N$ as a random variable (with distribution $\mathrm{Pois}(\lambda)$) rather than an unknown parameter, then we can model our multinomial counts as Poisson random variables. 

\begin{remark}
We argue that it makes more sense to view $N$ as an unobserved random variable, than as an unknown parameter: typically parameters are considered to be population quantities, whereas latent/missing variables are related to the sample at hand, but are simply not observed. We can see that $N$ is in fact a function of the (unobserved) sample, and so it makes more sense to treat it as a latent random variable. 
\end{remark}

We will proceed to write down two models that are equivalent.

{\bf Model 1:}

\begin{align*}
\{\alpha_i\},\theta&\sim \text{hyper priors}\\
\lambda &\sim \mathrm{Gamma}(\sum_{i=1}^k \alpha_i,\theta)\\
    \{p_i\}_{i=1}^k &\sim \mathrm{Dirichlet}(\{\alpha_i\}_{i=1}^k)\\
    n&\sim \mathrm{Pois}(\lambda)\\
    x_1,\ldots, x_k \mid s &\sim \mathrm{Multi}(n, \{p_i\})\\
    s&\sim p(s\mid x_1,\ldots, x_k).
\end{align*} 

{\bf Model 2:}

In the following, we use the change of variables: $\twid \lambda_i = \lambda p_i$ for $i=1,\ldots, k$.

\begin{align*}
\{\alpha_i\},\theta&\sim \text{hyper priors}\\
\twid \lambda_i &\sim \mathrm{Gamma}(\alpha_i,\theta), \quad \text{for} \quad i=1,\ldots, k\\
x_i&\sim \mathrm{Pois}(\twid \lambda_i), \quad \text{for} \quad i=1,\ldots, k\\
    s&\sim p(s\mid x_1,\ldots, x_k),
\end{align*}
Note that we can compute $\lambda = \sum_{i=1}^k \twid \lambda_i$, $n = \sum_{i=1}^k x_k$, and $p_i = \twid\lambda_i/\lambda$, yet none of these quantities need to be explicitly considered in the model. If we do not care about having a conjugate prior, instead of modeling $\twid \lambda_i$ directly, we still have the option of separately modeling $\lambda$ (the expected value and variance of $n$) and $\{p_i\}$. An empirical Bayes approach may estimate $\lambda$ using a privatized estimator for $n$. 

One quirk of this model is that by modeling $n\sim \mathrm{Pois}(\lambda)$, we allow for $n\geq 0$ instead of $N\geq 1$ as assumed in the rest of the paper. In practice, this difference should not cause a significant difference in the resulting inference.

One major upside of this model is that whether we are doing an augmented MCMC sampler, or another type of simulation-based inference, there is no need for the memory or run-time to explicitly depend on $n$. For example, consider the following Metropolis-within-Gibbs sampler which is inspired by the data augmentation MCMC of \citet{ju2022data}:

\begin{enumerate}
    \item Update $\theta |x$
    \item For $i=1,\ldots, k$, set $x_i' = x_i \pm 1$, with probability $p(x_i'|x_i)$ (could be w.p. 1/2 unless $x_i=0$), and accept with probability 
    \[\frac{p(s|x') p(x'_i\mid \twid\lambda_i) p(x_i'|x_i)}{p(s|x) p(x_i|\twid\lambda_i) p(x_i'|x_i)}\Big]^1,\]
    and notice that the first ratio is bounded between $e^{-\ep}$ and $e^{\ep}$ (since changing $x_i$ to $x_i'$ can be achieved by adding/deleting a person), the second is a ratio of Poisson pdfs, which is either $\twid \lambda/(x_i+1)$ if $x_i'=x_i+1$ or $x_i/\twid \lambda$ if $x'_i=x_i$, and the final ratio is bounded between $1/2$ and $2$. 
\end{enumerate}
We see that a cycle only takes $O(k)$ time instead of $O(n)$ time, and there is no need to consider $n$ in the sampler.

\begin{figure}[ht]
    \centering
    \includegraphics[width=.48\linewidth]{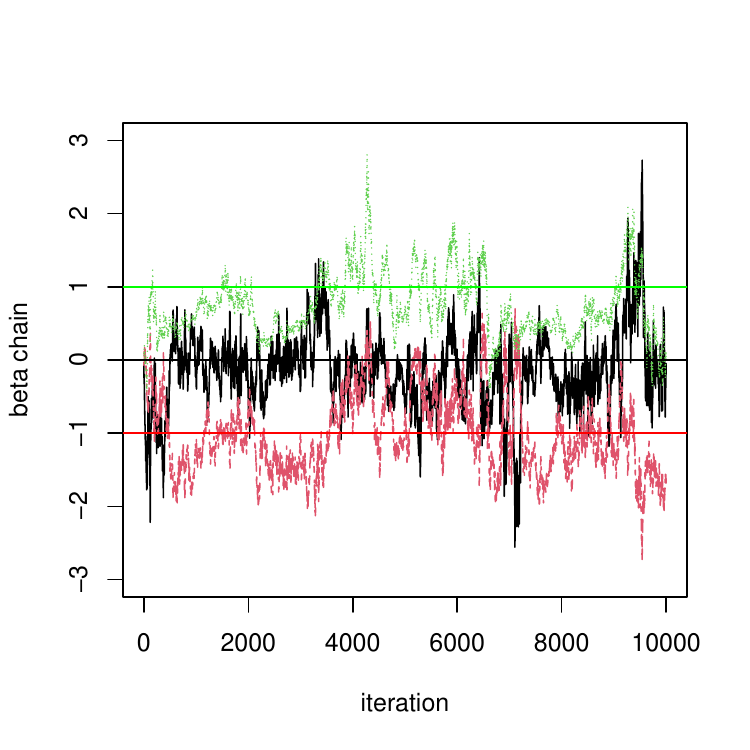}
    \includegraphics[width=.48\linewidth]{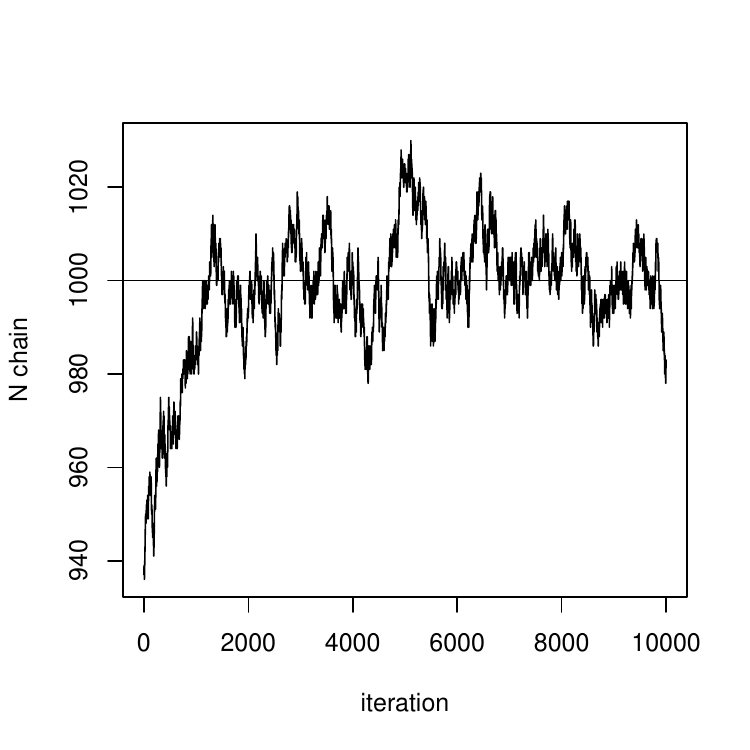}
    \caption{A plot of a single chain for the linear regression experiment, using $\ep_s=1$ and $\ep_N=.01$. 10000 iterations were completed. While a burn in of 5000 is used for the calculations, all iterations of the chain are plotted here. The initial value for $\beta$ is $(0,0,0)$ and the initial value for $N$ is $\ndp = 938$. The true values of $\beta$ are $(0,-1,1)$, illustrated by the horizontal lines.}
    \label{fig:LR}
\end{figure}

\begin{table}[ht] 
\centering

\begin{tabular}{lrrrrrr}
\hline
$\ep_N=$            & 0.001   & 0.01   & 0.1   & 1     & 10    & Inf   \\ \hline
$\EE(\beta_0)$\_SE  & 0.080   & 0.040  & 0.044 & 0.045 & 0.044 & 0.041 \\
$\var(\beta_0)$\_SE & 3.497   & 0.120  & 0.112 & 0.095 & 0.092 & 0.101 \\
$\EE(\beta_1)$\_SE  & 0.079   & 0.062  & 0.057 & 0.048 & 0.060 & 0.048 \\
$\var(\beta_1)$\_SE & 3.593   & 0.109  & 0.137 & 0.100 & 0.102 & 0.094 \\
$\EE(\beta_2)$\_SE  & 0.120   & 0.065  & 0.052 & 0.053 & 0.057 & 0.057 \\
$\var(\beta_2)$\_SE & 1.804   & 0.105  & 0.150 & 0.103 & 0.138 & 0.072 \\
$\EE(\tau)$\_SE     & 0.045   & 0.043  & 0.039 & 0.039 & 0.040 & 0.039 \\
$\var(\tau)$\_SE    & 0.042   & 0.048  & 0.036 & 0.042 & 0.037 & 0.032 \\
$\EE(n)$\_SE        & 101.040 & 13.060 & 1.351 & 0.140 & 0.002 & 0.000 \\
$\var(n)$\_SE       & 96.843  & 47.688 & 9.806 & 0.027 & 0.002 & 0.000 \\ \hline
\end{tabular}

\vspace{.5cm}

\begin{tabular}{lrrrrrr}
\hline
$\ep_N=$            & 0.001   & 0.01   & 0.1   & 1     & 10    & Inf   \\ \hline
$\EE(\beta_0)$\_SE  & 0.031   & 0.027  & 0.026 & 0.025 & 0.027 & 0.025 \\
$\var(\beta_0)$\_SE & 0.039   & 0.013  & 0.016 & 0.014 & 0.014 & 0.015 \\
$\EE(\beta_1)$\_SE  & 0.027   & 0.030  & 0.028 & 0.029 & 0.028 & 0.028 \\
$\var(\beta_1)$\_SE & 0.014   & 0.010  & 0.011 & 0.011 & 0.012 & 0.011 \\
$\EE(\beta_2)$\_SE  & 0.031   & 0.032  & 0.028 & 0.028 & 0.031 & 0.030 \\
$\var(\beta_2)$\_SE & 0.015   & 0.014  & 0.012 & 0.013 & 0.012 & 0.012 \\
$\EE(\tau)$\_SE     & 0.043   & 0.030  & 0.030 & 0.029 & 0.030 & 0.031 \\
$\var(\tau)$\_SE    & 0.021   & 0.017  & 0.018 & 0.015 & 0.016 & 0.017 \\
$\EE(n)$\_SE        & 76.632  & 1.328  & 0.627 & 0.129 & 0.002 & 0.000 \\
$\var(n)$\_SE       & 372.407 & 44.712 & 4.963 & 0.023 & 0.002 & 0.000 \\ \hline
\end{tabular}

\caption{Monte Carlo standard errors for the estimates in Table \ref{tab:sim01}. Top: $\ep_s=.1$, Bottom: $\ep_s=1$.}
\label{tab:sim01se}
\end{table}

\section{Additional numerical results}\label{s:extraSim}
In this section, we include additional numerical results corresponding to Section \ref{s:simulations}.

\subsection{Implementation of computational methods}
\noindent {\bf RJMCMC: }
In terms of data structure, we propose to first guess an estimate of $N>n$, such as $N=2\max\{1,\lceil \ndp\rceil\}$, and fill the first $n$ entries of the array with $x_i$ values and leave the last $N-n$ as NaNs. As we update $n$ and $x$, if we delete a row, we simply update the value of $n$ and ignore the old row; if we add a row, we place it's value in the first available spot and update $n=n+1$. If we propose $n^*>N$, then we make a new array with size $2N$ and move our database to the new array. 
By doubling the size, we reduce the frequency of creating and copying large arrays. 

\noindent{\bf MCEM: }
When using MCMC to approximate $p(x_{1:n}, n \mid \sdp,\theta^{(t+1)})$, chains can be initialized with samples from  $p(x_{1:n}, n \mid \sdp, \theta^{(t)})$, to require less burn-in.

\subsection{Linear regression}
 First, we derive the $\ell_1$ sensitivity of the unique entries in $\twid X^\top \twid X$, $\twid X^\top \twid Y$, and $\twid Y ^\top \twid Y$. The entries are of the form $\sum_i \twid X_i^{(j)}$, $\sum_i (\twid X_i^{(j)})^2$, $\sum_i \twid X_i^{(j)}\twid X_i^{(k)}$, $\sum_i \twid X_i^{(j)}\twid Y_i$, $\sum_i \twid Y_i$, and $\sum_i \twid Y_i^2$, for $i=1,\ldots, n$ and $1\leq j<k\leq p$, where $\twid X_i^{(j)}$ represents the $j^{th}$ feature of the $i^{th}$ individual/row. Since the quantities in $\twid X$ and $\twid Y$ have been normalized to take values in $[-1,1]$, we see that each entry has sensitivity 1 in unbounded DP. Thus, to calculate the $\ell_1$ sensitivity, we need to count the total number of unique entries. In the same order as above, we count 
\[\Delta = p+p+\binom{p}{2}+p+1+1=p^2/2+(5/2)p+2.\]

In Figure \ref{fig:LR} we include a visual plot of a single chain for the linear regression experiment using $\ep_s=1$ and $\ep_n=.01$. In the left portion, we plot the values of the $\beta$ vector, which we see converge to a stationary distribution centered at the true values, indicated by horizontal lines. In the right portion, we plot the values for $n$, starting at $\ndp=938$. We see that the chain quickly converges to a stationary distribution centered around the true value of 1000.

In Table \ref{tab:sim01se} we include the Monte Carlo standard errors corresponding to the results in Table \ref{tab:sim01}. We see that the standard errors for the estimates involving $\beta$ and $\tau$ are stable as $\ep_n$ varies, with the exception of $\var(\beta)$, which has an inflated standard error when $\ep_s=.1$ and $\ep_n=.001$. On the other hand, we see that the uncertainty in estimates involving $n$ does vary significantly depending on the value of $\ep_n$. These results suggest that the value of $\ep_n$ is not very important when learning about parameters other than $n$.

\begin{table}[t]

\centering

\begin{tabular}{lrrrrrr}
\hline
$\ep_n=$        & 0.001     & 0.01     & 0.1      & 1        & 10       & Inf      \\ \hline
$\EE(\beta_0)$  & -0.176    & -0.282   & -0.303   & -0.305   & -0.305   & -0.305   \\
$\var(\beta_0)$ & 1.357     & 0.715    & 1.682    & 2.322    & 2.452    & 2.468    \\
$\EE(\beta_1)$  & -0.232    & -0.259   & -0.275   & -0.280   & -0.280   & -0.280   \\
$\var(\beta_1)$ & 0.773     & 0.977    & 1.443    & 1.614    & 1.642    & 1.646    \\
$\EE(\beta_2)$  & 0.176     & 0.242    & 0.247    & 0.248    & 0.248    & 0.248    \\
$\var(\beta_2)$ & 0.905     & 1.004    & 1.486    & 1.991    & 2.001    & 2.003    \\
$\EE(\tau^{-1})$     & 1.539     & 1.539    & 1.481    & 1.456    & 1.455    & 1.455    \\
$\EE(\max\{\ndp, p+1\})$      & 4429.898 & 1046.249 & 982.472 & 998.247 & 999.825 & 1000.000 \\ 
$\EE(\ndp + r)$      & 15316.610 & 9129.091 & 9037.576 & 9041.638 & 9042.245 & 9042.319 \\ \hline
\end{tabular}

\vspace{.25cm}

\begin{tabular}{lrrrrrr}
\hline
$\ep_n=$        & 0.001     & 0.01     & 0.1      & 1        & 10       & Inf      \\ \hline
$\EE(\beta_0)$  & -0.154    & -0.306   & -0.251   & -0.110   & -0.107   & -0.107   \\
$\var(\beta_0)$ & 16.238    & 12.353   & 2.485    & 1.216    & 1.186    & 1.183    \\
$\EE(\beta_1)$  & -0.609    & -0.801   & -0.921   & -0.960   & -0.960   & -0.960   \\
$\var(\beta_1)$ & 1.644     & 2.177    & 1.246    & 0.499    & 0.487    & 0.485    \\
$\EE(\beta_2)$  & 0.508     & 0.691    & 0.825    & 0.870    & 0.871    & 0.871    \\
$\var(\beta_2)$ & 1.871     & 2.015    & 1.442    & 0.619    & 0.604    & 0.603    \\
$\EE(\tau^{-1})$     & 1.435     & 1.470    & 0.985    & 0.977    & 0.976    & 0.976    \\
$\EE(\max\{\ndp, p+1\})$      & 4429.898 & 1046.249 & 982.472 & 998.247 & 999.825 & 1000.000 \\ 
$\EE(\ndp + r)$      & 10052.274 & 1744.212 & 1152.750 & 1158.295 & 1159.423 & 1159.551 \\ \hline
\end{tabular}

\caption{Linear regression results via parametric bootstrap using $\ep_s=.1$ (top) or $\ep_s=1$ (bottom). Results are averaged over 100 replicates. The results for $\hat\beta$ for each replicate and setting are computed using 10,000 bootstrap samples. The results for $\hat\tau$ and $\ndp+r$ are based on the regularized DP version of $A^\top A$. 
Note that $\ep_n=\mathrm{Inf}$ corresponds to bounded DP. One replicate that produced extremely inflated $\var(\beta_j)$ was excluded from the results.}
\label{tab:boots}
\end{table}

To our knowledge, no existing DP methods handle unknown sample size in a principled way. For comparison, we use a version of the parametric bootstrap \citep{ferrando2022parametric}, adapted by \cite{barrientos2024feasibility}, which uses a plug-in DP estimate of $n$, needed to estimate $\tau^{-1}$. 
Specifically, the approach privatizes $A^\top A$ by adding noise $E$, where $E_{ij} = E_{ji} \sim \mathrm{Laplace}(0, b)$, with $b = 1/\epsilon_n$ if $i = j = 1$ and $b = \Delta/\epsilon_s$ otherwise. Because $A^\top A + E$ may not be positive definite (which may lead to $\hat\tau_{\rm MLE}^{-1} < 0$), a diagonal matrix $r I_{p+1}$ is added, where $r = \max(0, -2\lambda_{\min})$ and $\lambda_{\min}$ is the minimum eigenvalue of $A^\top A + E$. Since the (1,1) entry of $A^\top A$ corresponds to the sample size, they use the (1,1) entry of $A^\top A + E + r I_{p+1}$ as its privatized version, denoted $\ndp + r$, where $\ndp = n + E_{11}$.


In Table \ref{tab:boots}, when $\epsilon_s = 0.1$, $\var(\hat\beta_j)$ decreases as $\epsilon_n$ increases, due to stronger regularization (larger $r$) inflating $\ndp + r$ and artificially reducing variance. When $\epsilon_s = 1$, variance decreases as expected with increasing $\epsilon_n$ but remains higher than in our method. In general, small $\epsilon_s$ or $\epsilon_n$ inflates $\EE(\ndp + r)$ beyond the true sample size. Increasing $\epsilon_s$ reduces $r$, which lowers noise but also decreases $\ndp + r$, sometimes raising $\var(\hat\beta_j)$. Our method avoids these issues by modeling uncertainty in the sample size, rather than relying on a plug-in strategy that can be unstable under small privacy budgets.

Finally, when privately estimating $\tau^{-1}$, instead of plugging in $\ndp + r$, one could use $\max\{\ndp, p+1\}$, which may be a more natural candidate. However, using $\max\{\ndp, p+1\}$ results in estimates of $\tau^{-1}$ that are too large, ranging from 13,500 to 10 when either $\epsilon_n$ or $\epsilon_s$ is small. Since using $\max\{p+1, \ndp\}$ leads to overestimated $\tau^{-1}$, it is better to use $\ndp + r$ for the plug-in. However, as shown in Table \ref{tab:boots}, $\max\{p+1, \ndp\}$ has lower bias than $\ndp + r$, and therefore, if users need an estimate of the sample size, they should use $\max\{\ndp, p+1\}$.

Without DP, $\tau^{-1}$ can be estimated from $A^\top A$ (with $A = (\tilde X, \tilde Y)$) as $\hat\tau_{\rm MLE} = (\tilde Y^\top \tilde Y - \tilde Y^\top \tilde X (\tilde X^\top \tilde X)^{-1} \tilde X^\top \tilde Y)/(n-p)$; to obtain a private estimate of $\tau^{-1}$, \cite{barrientos2024feasibility} uses a DP version of $A^\top A$ and plugs it into the non-private estimator $\hat\tau_{\rm MLE}^{-1}$.

\subsection{Real data application}

In this section we include additional simulations corresponding to Section \ref{s:realData}. 
In Figure \ref{fig:ATUS2}, we visually compare the posterior distributions of $\alpha_1$, $\alpha_2$, and $\alpha_3$.  Figure \ref{fig:ATUS3} displays the predictive distribution for two compositional components. We observe that the predictive distributions are quite robust to the specifications of $\epsilon_s$ and $\epsilon_n$, a pattern that was also noted in \cite{guo2024differentially}.

\begin{figure}[hb!]
    \centering
    \includegraphics[width=.32\linewidth, page = 1]{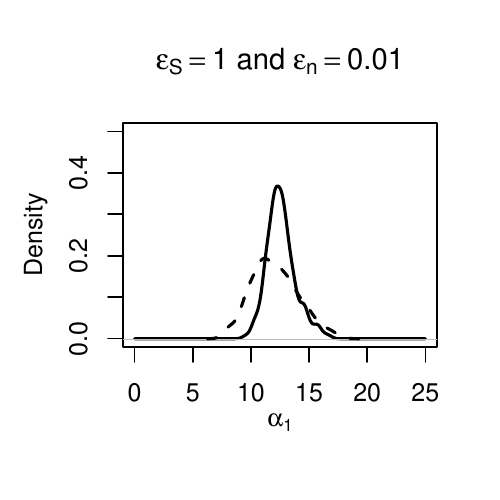}
    \includegraphics[width=.32\linewidth, page = 2]{ATUS_figures_alpha_post.pdf}
    \includegraphics[width=.32\linewidth, page = 3]{ATUS_figures_alpha_post.pdf}

\vspace{-1cm}
    
    \includegraphics[width=.32\linewidth, page = 4]{ATUS_figures_alpha_post.pdf}
    \includegraphics[width=.32\linewidth, page = 5]{ATUS_figures_alpha_post.pdf}
    \includegraphics[width=.32\linewidth, page = 6]{ATUS_figures_alpha_post.pdf}

\vspace{-.5cm}
    
    \caption{Posterior distributions for $\alpha_1$, $\alpha_2$, and $\alpha_3$ with $\epsilon_{s} \in \{1, 10\}$ and $\epsilon_{n} \in \{0.01, 0.1, 1\}$. The solid and dashes lines represent the posterior distribution under bounded and unbounded differential privacy, respectively. The displayed densities for unbounded DP correspond to the densities obtained from one of the 10 realizations of $\ndp$. The results for the remaining realizations present similar patterns.}
    \label{fig:ATUS2}
\end{figure}
\begin{figure}[ht]
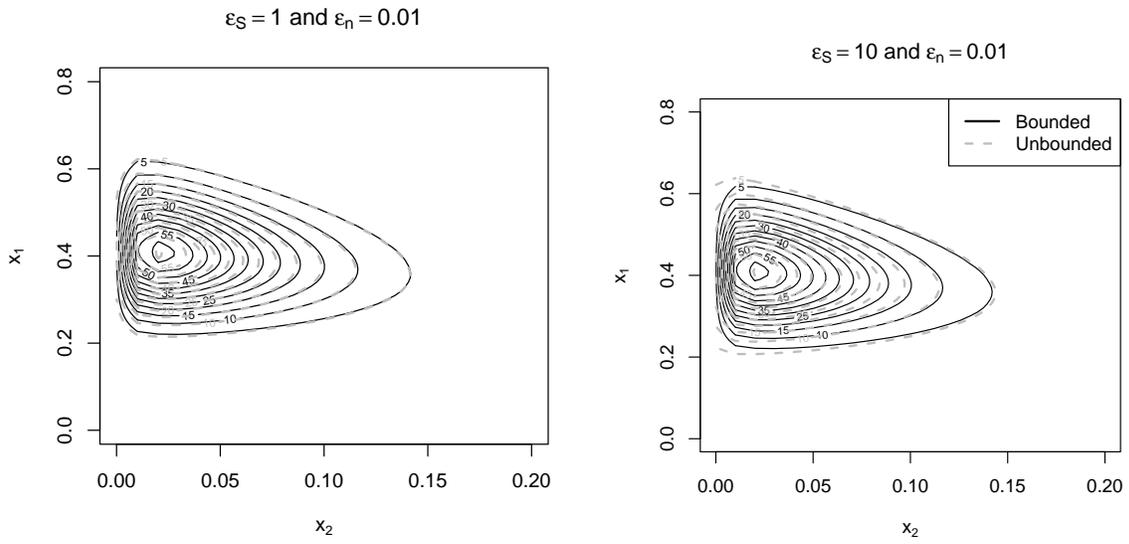

    \centering
    \includegraphics[width=.48\linewidth, page = 3]{ATUS_figures.pdf}
    \includegraphics[width=.48\linewidth, page = 6]{ATUS_figures.pdf}
    \caption{Predictive distributions for $(x_1, x_2)$ with $\epsilon_{s} \in \{1, 10\}$ and $\epsilon_{n} = 0.01$. The displayed densities for unbounded DP correspond to the densities obtained from one of the 10 realizations of $\ndp$. The results for the remaining realizations present similar patterns.} 
    \label{fig:ATUS3}
\end{figure}

\end{document}